\documentclass[11pt,a4paper]{article}
\usepackage{etex}
\usepackage{inputenc}
\usepackage[english]{babel}
\usepackage{soul}
\usepackage{lmodern}					% harmonie des polices
\usepackage{textcomp}				% table de symboles
\usepackage{verbatim}			% pour faire des commentaires par block avec begin{comment}...end{comment}
\usepackage[toc,page]{appendix} 		% pour faire des annexes
\usepackage{cite}
\usepackage{amsmath,					% packages mathmatiques
			amsthm,
			amsfonts,
			amssymb, 
			mathtools}%,a4wide}
\usepackage{stmaryrd}
\usepackage{physics}
\usepackage{enumitem}
\usepackage{hyperref}
\hypersetup{
colorlinks=true,
linkcolor=purple,
filecolor=magenta,
urlcolor=cyan,
citecolor=blue
}

 \usepackage{tikz-cd} 
\usetikzlibrary{tikzmark}
\usetikzlibrary{arrows}
 \usepackage{verbatim}
\definecolor{DarkGreen}{HTML}{1cad22}
\usepackage[hmargin=2cm,vmargin=2cm]{geometry}
\usepackage{graphicx} % Required for inserting images
\usepackage{amsthm, amssymb, amsmath}
\usepackage{color}
\usepackage{xcolor}
\usepackage{tikz}
\usepackage{tikz-cd}
\usepackage{amsmath}
\usepackage{amssymb}
\usepackage{float}
\usepackage{caption}
\usepackage{mathtools}
\usepackage{hyperref}
\usepackage{mathrsfs}

\usepackage{graphicx}
\usepackage{epsfig}
\newcommand{\e}{e}

\numberwithin{equation}{section}

{\theoremstyle{definition}\newtheorem{definition}{Definition}[section]

\newtheorem{defititle}[definition]{\Title}

\newtheorem{remark}[definition]{Remark}
\newtheorem{example}[definition]{Example}

}
\newtheorem{prop}[definition]{Proposition}
\newtheorem{proposition-definition}[definition]{Proposition-Definition}
\newtheorem{lemma}[definition]{Lemma}
\newtheorem{thm}[definition]{Theorem}
\newtheorem{cor}[definition]{Corollary}

\newtheorem*{theorem}{Theorem}

\newtheoremstyle{named}{}{}{\itshape}{}{\bfseries}{.}{.5em}{\thmnote{#3's }#1}
\theoremstyle{named}

\usepackage{amsthm}

\newenvironment{mainthm-env}[1]
  {\begin{theorem}[Main Theorem #1]}
  {\end{theorem}}

\title{Obstructions to lifting quaternionic torus actions}

\author{Panagiotis Batakidis$^\dagger$ \\\href{mailto:batakidis@math.auth.gr}{batakidis@math.auth.gr}  \and Ioannis Gkeneralis$^\dagger$
\\\href{mailto:igkeneralis@math.auth.gr}{igkeneralis@math.auth.gr}}

\date{$^\dagger$ Department of Mathematics, Aristotle University of Thessaloniki}

\newcommand{\keywords}[1]{\textbf{Keywords: } #1}
\newcommand{\msc}[1]{\textbf{Mathematics Subject Classification: } #1}

\begin{document}

\maketitle

\begin{abstract}
We study the problem of lifting global and local quaternionic torus actions to principal quaternionic torus bundles. Let \(Q^k=(\operatorname{Sp}(1))^k\cong (S^3)^n\), let \(G\) be a compact Lie group acting on a connected, locally finite CW complex \(X\), and let \(Q^k\longrightarrow P\longrightarrow X\) be a principal \(Q^k\)-bundle. We first formulate a quaternionic analogue of the obstruction-theoretic framework of Hattori--Yoshida. The existence of a lifted \(G\)-action implies that the isomorphism class of \(P\) lies in the image of the restriction map induced by the Borel construction \(X_G=EG\times_GX\). In particular, the second Chern class admits an equivariant extension. Once a continuous pseudo-lift has been chosen, its failure to define a genuine action is measured by a factor set with values in the generally nonabelian gauge group \(\mathcal G(P)\cong\Gamma(\operatorname{Ad}(P))\). We obtain a necessary and sufficient lifting criterion in terms of the trivializability of this factor set, and show that, once a single lift exists, the set of all lifts modulo gauge conjugacy is classified by a pointed nonabelian \(H^1\)-set.

We then apply this global theory to local quaternionic torus actions. Pulling back to the universal covering of the orbit space, untwists a local \(Q^n\)-action and produces a globally defined action on the pulled-back manifold. A preliminary lift of this global action need not be compatible with the deck transformations. We define a gauge-valued nonabelian descent defect, establish its crossed-cocycle identities and transformation law, and prove that the original local action lifts if and only if the global lifting obstruction vanishes and the descent defect is trivializable. In the abelian case, these constructions reduce to the classical obstruction theory for lifts of local torus actions.
\end{abstract}

% Keywords and MSC
\noindent \keywords{quaternionic torus action, lifting group action, principal \text{Sp}(1)-bundle, nonabelian descent obstruction, equivariant lift.} \\[1ex]
\msc[2020] {Primary 57S15, 57S25; Secondary 55R10, 55R91, 55S35.}

\vspace{0.5ex}
\noindent \textbf{Conflict of Interest and Data Availability Statement:} The authors state that there is no conflict of interest to declare. Moreover, data sharing is not applicable to this article as no datasets were generated or analyzed during the research carried out in this paper.

\tableofcontents

\section{Introduction}

The problem of lifting group actions on manifolds to compatible group actions on fiber bundles is a classical question in transformation group theory and equivariant topology. Given a topological group \(G\) acting on a space \(X\) and a principal bundle \[ H\longrightarrow P\overset{\pi_P}{\longrightarrow}X, \] one asks whether the action of \(G\) on \(X\) can be lifted to an action on \(P\) by principal \(H\)-bundle automorphisms. Such a lift is not merely a collection of bundle isomorphisms covering the transformations of \(X\) itself: these isomorphisms must depend continuously on \(G\) and satisfy the group law.
 Early results on this problem were obtained by Stewart~\cite{steward}, who studied lifts of compact group actions in fiber bundles and obtained, in particular, lifting results for actions of simply connected semisimple compact Lie groups. Hattori and Yoshida~\cite{hatyos} subsequently developed a systematic obstruction theory for lifting actions to principal torus bundles. More precisely, let \(G\) be a compact Lie group acting on a connected, locally finite CW complex \(X\), and let
\[
T^n\longrightarrow P\longrightarrow X
\]
be a principal torus bundle. The Borel construction of the \(G\)-space \(X\) is
\[
X_G:=EG\times_G X,
\]
where \(EG\to BG\) is a universal principal \(G\)-bundle. The inclusion of a fiber $j:X\longrightarrow X_G$ induces a restriction map from principal \(T^n\)-bundles over \(X_G\) to principal \(T^n\)-bundles over \(X\). Hattori and Yoshida prove that the \(G\)-action on \(X\) lifts to \(P\) if and only if the isomorphism class of \(P\) lies in the image of this restriction map. Since principal \(T^n\)-bundles are classified by their first Chern classes, this is equivalent to requiring that $c_1(P)\in H^2(X;\mathbb Z^n)$ admits an equivariant extension, namely a class in $H_G^2(X;\mathbb Z^n)=H^2(X_G;\mathbb Z^n)$ whose restriction to \(X\) is \(c_1(P)\). Related classification results for equivariant bundles with abelian structural group were developed by Lashof--May--Segal~\cite{lms}.

The abelian nature of the structural torus plays an essential role in the classical theory. For a principal \(T^n\)-bundle, the gauge group is naturally identified with the abelian group \(C(X,T^n)\) of continuous maps from \(X\) to \(T^n\). Thus, after choosing a continuous section of the extension of bundle automorphisms covering the \(G\)-action, the failure of this section to satisfy the group law is measured by an ordinary continuous group-cohomology class. In this context, a \emph{pseudo-lift} means only such a continuous section, whereas a \emph{lift} is a pseudo-lift which is also a homomorphism. When a lift exists, the set of lifts modulo gauge conjugacy is described by an ordinary first cohomology group.

The purpose of the present work is to develop an analogous theory for principal bundles whose structural group is a quaternionic torus, i.e. $H=Q^k=(\operatorname{Sp}(1))^k\cong (SU(2))^n\cong (S^3)^n.$ As \(Q^k\) is nonabelian, the gauge group 
\[ 
\mathcal G(P) \cong \Gamma\bigl(\operatorname{Ad}(P)\bigr) 
\] 
is generally nonabelian, and the ordinary cohomological formulation of the torus case does not carry over directly. This changes both the nature of the global obstruction and the classification of lifts. 

We first consider the global problem. Let \(G\) be a compact Lie group acting continuously on a connected, locally finite CW complex \(X\), and let 
\[ Q^k\longrightarrow P\overset{\pi_P}{\longrightarrow}X \] 
be a principal quaternionic torus bundle. A \emph{lift} (see Definition~\ref{def:global-lift}) of the \(G\)-action to \(P\) is a continuous action of \(G\) on \(P\) by principal \(Q^k\)-bundle automorphisms, that is, 
\[ \pi_P(g\cdot p)=g\cdot\pi_P(p), \qquad g\cdot(p\cdot q)=(g\cdot p)\cdot q,\;\;\;\forall g\in G,\;p\in P,\;q\in Q^k. \] 

\noindent Let \(\mathcal Q^k(X)\) denote the set of isomorphism classes of principal \(Q^k\)-bundles over \(X\) and denote by \[ \mathcal Q^k_G(X)\subseteq \mathcal Q^k(X) \] 
the subset of isomorphism classes of principal \(Q^k\)-bundles over \(X\) which admit such a lifted \(G\)-action.

\noindent Let \(EG\to BG\) be a universal principal \(G\)-bundle. The Borel construction of the \(G\)-space \(X\) is \[ X_G:=EG\times_G X.\] 
The inclusion of a fiber $j:X\longrightarrow X_G $ induces a restriction map \[ j^*:\mathcal{Q}^k(X_G)\longrightarrow \mathcal{Q}^k(X), \] where \(\mathcal{Q}^k(Y)\) denotes the set of isomorphism classes of principal \(Q^k\)-bundles over a space \(Y\). We write \[ \mathcal Q^k_{\mathrm{Bor}}(X;G) := \operatorname{im}(j^*) \subseteq \mathcal Q^k(X) \] for the corresponding Borel-image subset. 

If the \(G\)-action on \(X\) lifts to \(P\), then \(P\) becomes a \(G\)-equivariant principal \(Q^k\)-bundle. Hence its Borel construction \[ P_G:=EG\times_G P \] is a principal \(Q^k\)-bundle over \(X_G\), and the restriction of \(P_G\) along \(j:X\to X_G\) is naturally isomorphic to \(P\). Therefore \[ \mathcal Q^k_G(X)\subseteq \mathcal Q^k_{\mathrm{Bor}}(X;G). \] 

\noindent This necessary Borel condition is proved in Proposition~\ref{prop:lifting-implies-borel}. Throughout this paper, ordinary cohomology means singular cohomology with the indicated coefficients, and equivariant cohomology is understood in the Borel sense, \[ H^*_G(X;A):=H^*(X_G;A). \] 
Thus, as a consequence from the discussion above, the second Chern class $c_2(P)\in H^4(X;\mathbb Z^k)$ must lie in the image of the forgetful map \[ j^*:H^4_G(X;\mathbb Z^k)=H^4(X_G;\mathbb Z^k) \longrightarrow H^4(X;\mathbb Z^k). \] This condition says precisely that \(c_2(P)\) admits an equivariant extension (see Corollary~\ref{cor:c2-obstruction-global}). We emphasize that \(c_2(P)\) is used only as the first characteristic datum of the bundle. It is not, in general, a complete classification invariant for principal \((\operatorname{Sp}(1))^k\)-bundles. Already for quaternionic line bundles over quaternionic projective spaces, unstable homotopy-theoretic data beyond \(c_2\) may enter the classification~\cite{granja}; moreover, when \(c_2\) is torsion, a secondary invariant is needed to complete the classification~\cite{crowleygoette}.

To describe the remaining obstruction, we consider the extension of bundle automorphisms associated with the \(G\)-action. Let
\[
\mathcal G(P):=\operatorname{Aut}_X(P)
\]
be the gauge group of \(P\), namely the group of principal \(Q^k\)-bundle automorphisms covering the identity map of \(X\). Let  \(\varphi:G\to\operatorname{Homeo}(X)\) be the given action on \(X\) and define
\[
\widehat{\operatorname{Isom}}_G(P)
:=
\left\{
(g,F)\in G\times\operatorname{Isom}(P):
\varphi(g)\circ\pi_P=\pi_P\circ F
\right\}.
\]
Thus an element of \(\widehat{\operatorname{Isom}}_G(P)\) is a bundle automorphism of \(P\) covering some transformation of \(X\) coming from the \(G\)-action. Together with the natural projection to the first factor, this builds an exact sequence
\[
1\longrightarrow
\mathcal G(P)
\longrightarrow
\widehat{\operatorname{Isom}}_G(P)
\longrightarrow
G
\longrightarrow
1,
\]
provided every \(g\in G\) is covered by at least one principal bundle
automorphism of \(P\).

\noindent We then define a \emph{pseudo-lift} of the \(G\)-action to \(P\) to be a continuous section
\[
s:G\longrightarrow\widehat{\operatorname{Isom}}_G(P)
\]
of this projection. Thus \(s(g)\) is a principal bundle automorphism of \(P\) covering \(\varphi(g)\), but note that \(s\) is not required to satisfy the group law. Thus we furthermore define a \emph{lift} to be precisely a pseudo-lift that  is a homomorphism.

Given a pseudo-lift \(s\), define the associated \emph{factor set} (see Definition~\ref{def:factor-set}), as a map,
\[
\omega_s:G\times G\longrightarrow\mathcal G(P),
\]
such that
\[
\omega_s(g,h)=s(g)s(h)\bigl(s(gh)\bigr)^{-1}.
\]
This takes values in \(\mathcal G(P)\), because both \(s(g)s(h)\) and \(s(gh)\) cover the same homeomorphism \(\varphi(gh)\) of \(X\). Hence \(\omega_s\) measures exactly the failure of \(s\) to be a homomorphism:
\[
\omega_s=1
\quad\Longleftrightarrow\quad
s(g)s(h)=s(gh)\ \text{for all }g,h\in G.
\]

In turn, associativity in \(\widehat{\operatorname{Isom}}_G(P)\) implies the nonabelian factor-set identity
\[
\omega_s(g,h)\,\omega_s(gh,\ell)
=
{}^{s(g)}\omega_s(h,\ell)\,\omega_s(g,h\ell),
\]
where
\[
{}^{s(g)}\gamma:=s(g)\gamma s(g)^{-1},
\qquad
\gamma\in\mathcal G(P).
\]
Thus the factor set satisfies the usual cocycle identity, but with a nontrivial conjugation action on the nonabelian gauge group.

To introduce the last notion before the statement of our first main result, let \(a:G\to\mathcal G(P)\) be a continuous gauge-valued map. Then
\[
s^a(g):=a(g)s(g)
\]
is another pseudo-lift. Its factor set is
\[
\omega_{s^a}(g,h)
=
a(g)\,{}^{s(g)}a(h)\,\omega_s(g,h)\,a(gh)^{-1}.
\]
We say (see Definition~\ref{def:factor-set-trivializable}) that \(\omega_s\) is \emph{trivializable} if there exists such a continuous map \(a:G\to\mathcal G(P)\) for which
\[
\omega_{s^a}(g,h)=1,\;\;\forall g,h\in G.
\]
Equivalently, the modified pseudo-lift \(s^a\) is a lift. This leads to the following criterion.
\begin{theorem}[Global lifting criterion; Theorem~\ref{thm:global-q-HY}] \label{thm:intro-global} 
Let \(G\) be a compact Lie group acting continuously on a connected, locally finite CW complex \(X\), and let \( Q^k\longrightarrow P\longrightarrow X \) be a principal \(Q^k\)-bundle. Assume that the class of \(P\), as an isomorphism class of principal \(Q^k\)-bundles over \(X\), belongs to the Borel-image subset \( \mathcal Q^k_{\mathrm{Bor}}(X;G) \subseteq \mathcal Q^k(X), \) so that the projection \( \widehat{\operatorname{Isom}}_G(P)\longrightarrow G \) admits a continuous section. Then the \(G\)-action on \(X\) lifts to \(P\) if and only if the factor set \( \omega_s(g,h) = s(g)s(h)\bigl(s(gh)\bigr)^{-1} \in \mathcal G(P) \) associated with one, and hence every, pseudo-lift \( s:G\longrightarrow\widehat{\operatorname{Isom}}_G(P) \) is trivializable. 
\end{theorem}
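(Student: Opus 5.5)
The plan is to prove the two implications directly from the definitions. The hypothesis that \([P]\in\mathcal Q^k_{\mathrm{Bor}}(X;G)\) is used only to guarantee that a continuous pseudo-lift \(s\) exists; the theorem states this as part of the assumption, so I take it as given. The argument has three steps: sufficiency, necessity, and independence of the choice of pseudo-lift.

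\textbf{Sufficiency.} Suppose \(\omega_s\) is trivializable. Then there is a continuous \(a:G\to\mathcal G(P)\) with \(\omega_{s^a}\equiv 1\), so \(s^a(g)s^a(h)=s^a(gh)\) for all \(g,h\). Hence \(s^a\) is a continuous homomorphism \(G\to\widehat{\operatorname{Isom}}_G(P)\) splitting the projection. I would define the action by
\[
g\cdot p:=F_g(p), \qquad s^a(g)=(g,F_g).
\]
It is an action because \(s^a\) is a homomorphism and \(s^a(e)=1\); the latter follows from \(\omega_{s^a}(e,e)=1\). Each \(F_g\) is a principal bundle automorphism covering \(\varphi(g)\), which gives equivariance with respect to \(Q^k\) and compatibility with \(\pi_P\).

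The one point needing care is joint continuity of \(G\times P\to P\). This requires the topology on \(\operatorname{Isom}(P)\) to be the compact-open topology, with evaluation continuous. Since \(X\) is a locally finite CW complex and \(P\) is locally compact Hausdorff, evaluation \(\operatorname{Isom}(P)\times P\to P\) is continuous, so the composite map \(G\times P\to P\) is continuous.

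\textbf{Necessity.} Suppose a lift exists, that is, a continuous action of \(G\) on \(P\) by bundle automorphisms. It defines a continuous homomorphism \(t:G\to\widehat{\operatorname{Isom}}_G(P)\), \(t(g)=(g,\,p\mapsto g\cdot p)\). Continuity into the compact-open topology follows from the exponential law, since the action map is continuous. Given any pseudo-lift \(s\), set
\[
a(g):=t(g)s(g)^{-1}.
\]
This lies in \(\mathcal G(P)\) because both factors cover \(\varphi(g)\), and it is continuous provided inversion and composition in \(\operatorname{Isom}(P)\) are continuous. By construction \(s^a=t\), so \(\omega_{s^a}=\omega_t\equiv1\), and \(\omega_s\) is trivializable.

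\textbf{Independence of the pseudo-lift.} If \(s'\) is another pseudo-lift, then \(a(g):=s'(g)s(g)^{-1}\) is a continuous gauge-valued map with \(s'=s^a\). A direct check gives \((s^a)^b=s^{ba}\), where \((ba)(g)=b(g)a(g)\). Therefore \(\omega_{s'}\) is trivializable if and only if \(\omega_s\) is. Alternatively, both conditions are equivalent to the existence of a lift, by the two implications above.

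The main obstacle I anticipate is topological rather than algebraic: showing that \(\operatorname{Isom}(P)\) with the compact-open topology is a topological group, i.e. that inversion is continuous. For general \(P\) this can fail unless \(P\) is locally compact and locally connected, or unless one uses the topology generated by \(F\) and \(F^{-1}\) together. I would handle this through the gauge description \(\mathcal G(P)\cong\Gamma(\operatorname{Ad}(P))\), where inversion is pointwise inversion in \(Q^k\) and hence continuous. The remaining verifications (the cocycle identity and the formula for \(\omega_{s^a}\)) are routine algebra already recorded before the statement.
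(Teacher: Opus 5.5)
Your argument for the equivalence itself is correct and follows the paper. Sufficiency is exactly Proposition~\ref{prop:global-obstruction}, and your rule $(s^a)^b=s^{ba}$ is the precise form of the ``composing gauge modifications'' step in Lemma~\ref{lem:trivializability-independent}. Your necessity step, $a(g)=t(g)s(g)^{-1}$ so that $s^a=t$, is in fact more complete than the paper's: the paper only treats the case where the given $s$ is already a homomorphism and tacitly relies on the independence lemma for the rest.

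On the topological point you flag, the $\Gamma(\operatorname{Ad}(P))$ description alone does not control $g\mapsto s(g)^{-1}$, since $s(g)$ is not a gauge transformation. In the necessity step, however, you can write $a(g)=\bigl(s(g)\,t(g^{-1})\bigr)^{-1}$, which uses only composition and inversion inside $\mathcal G(P)$. In any case $P$ is locally compact and locally connected (because $X$ is a locally finite CW complex), so $\operatorname{Homeo}(P)$ with the compact-open topology is a topological group by Arens' theorem.

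The genuine omission is the clause you chose to read as a hypothesis. In the statement, ``so that the projection admits a continuous section'' is a consequence of $[P]\in\mathcal Q^k_{\mathrm{Bor}}(X;G)$, and the paper proves it in Proposition~\ref{prop:pseudolift-from-borel}. It is needed: without a pseudo-lift, the ``every pseudo-lift'' formulation would hold vacuously while no lift exists.

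The missing argument runs as follows. Choose $R\to X_G$ with $j^*R\cong P$ and pull it back along the quotient map $\Pi:EG\times X\to X_G$. Since $\Pi$ is constant on diagonal $G$-orbits, $\Pi^*R$ carries the canonical action $g\cdot(z,q)=(gz,q)$ by bundle automorphisms. Take a contraction $r_t$ of $EG$ onto $e_0$ with $r_1=\operatorname{id}$, and apply the covering homotopy theorem to $r_t\times\operatorname{id}_X$ starting from $\widetilde r_1=\operatorname{id}$. The end map $\widetilde r_0$ lands in $\Pi^*R|_{\{e_0\}\times X}\cong P$ and covers $(e,x)\mapsto(e_0,x)$. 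Then $s(g)(p):=\widetilde r_0(g\cdot p)$ is a bundle map covering $\phi(g)$, hence an isomorphism. It depends continuously on $g$ because $(g,p)\mapsto\widetilde r_0(g\cdot p)$ is jointly continuous. With this added, your proof is complete.
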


 The theorem separates the global lifting problem into two explicit steps. First, the underlying principal \(Q^k\)-bundle must come from the Borel
construction, in the sense that
\[
[P]\in\mathcal Q^k_{\mathrm{Bor}}(X;G)\subseteq\mathcal Q^k(X).
\]
This is the homotopy-theoretic condition which gives the existence of bundle isomorphisms covering the individual transformations of \(X\). Second, these bundle isomorphisms must be chosen so as to satisfy the group law. The obstruction to this second step is the gauge-valued factor set \(\omega_s\). The action lifts precisely when this factor set can be killed by modifying the pseudo-lift by a continuous gauge-valued map. Thus we do not need to construct a full nonabelian second cohomology set; we only use the well-defined property that \(\omega_s\) is trivializable.

We also describe all lifts once one lift has been fixed. Let
\[
\widetilde\phi:G\longrightarrow\widehat{\operatorname{Isom}}_G(P)
\]
be a lift. Any other lift \(\widetilde\phi'\) covers the same \(G\)-action on \(X\), and therefore there is a unique continuous map $a:G\longrightarrow\mathcal G(P)$
such that
$\widetilde\phi'(g)=a(g)\widetilde\phi(g)$.
The condition that \(\widetilde\phi'\) is again a homomorphism is equivalent to the crossed homomorphism identity
\[
a(gh)=a(g)\,{}^{\widetilde\phi(g)}a(h),
\]
where
\[
{}^{\widetilde\phi(g)}\gamma
:=
\widetilde\phi(g)\gamma\widetilde\phi(g)^{-1},\;\;
\gamma\in\mathcal G(P).
\]
Thus \(a\) is a continuous nonabelian \(1\)-cocycle for the conjugation action of \(G\) on \(\mathcal G(P)\) induced by the fixed lift
\(\widetilde\phi\). Denote by \[H^1_{\mathrm{nab}} \bigl(G;\mathcal G(P)_{\widetilde\phi}\bigr)\]
 the pointed nonabelian cohomology set of such continuous crossed homomorphisms $a:G\longrightarrow\mathcal G(P)$ with the following equivalence relation: Two lifts are equivalent if they differ by conjugation by a single gauge transformation. More explicitly, if \(b\in\mathcal G(P)\), then conjugating \(\widetilde\phi'\) by \(b\) replaces \(a\) by
\[
a^b(g)
=
b\,a(g)\,{}^{\widetilde\phi(g)}b^{-1}.
\]

\noindent The distinguished point is the class of the trivial cocycle \(a(g)=1\) corresponding to the fixed lift \(\widetilde\phi\). \\
We then give the following enumeration result.

\begin{theorem}[Enumeration of lifts; Theorem~\ref{thm:enumeration-global-liftings}] \label{thm:intro-enumeration} If the \(G\)-action on $X$ admits a lift to \(P\), then the set of lifts modulo gauge conjugacy is naturally identified with the pointed nonabelian set $H^1_{\mathrm{nab}} \bigl(G;\mathcal G(P)_{\widetilde\phi}\bigr)$. 
\end{theorem}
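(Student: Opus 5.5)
The plan is to build an explicit bijection
\[
\Theta:\{\text{lifts of the }G\text{-action to }P\}/\text{gauge conjugacy}\longrightarrow H^1_{\mathrm{nab}}\bigl(G;\mathcal G(P)_{\widetilde\phi}\bigr),\qquad [\widetilde\phi']\longmapsto [a],
\]
where $a$ is defined by $\widetilde\phi'(g)=a(g)\widetilde\phi(g)$, and then to check that $\Theta$ is well defined, injective, surjective and pointed. Throughout I fix the lift $\widetilde\phi$.

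\textbf{Step 1: the difference map $a$ is a continuous crossed homomorphism.} Given another lift $\widetilde\phi'$, set $a(g):=\widetilde\phi'(g)\widetilde\phi(g)^{-1}$. Since both factors cover $\varphi(g)$, $a(g)$ covers the identity of $X$, so $a(g)\in\mathcal G(P)$, and $a$ is clearly the unique map with $\widetilde\phi'=a\,\widetilde\phi$. Continuity of $a$ follows from continuity of composition and inversion in $\widehat{\operatorname{Isom}}_G(P)$, with the topology used for pseudo-lifts (compact-open on $P$, which is locally compact since $X$ is locally finite and $Q^k$ is compact). The cocycle identity comes from expanding $\widetilde\phi'(gh)=\widetilde\phi'(g)\widetilde\phi'(h)$:
\[
a(gh)\widetilde\phi(gh)=a(g)\widetilde\phi(g)a(h)\widetilde\phi(h)=a(g)\,{}^{\widetilde\phi(g)}a(h)\,\widetilde\phi(g)\widetilde\phi(h).
\]
Now use $\widetilde\phi(gh)=\widetilde\phi(g)\widetilde\phi(h)$ and cancel. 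Conversely, for any continuous crossed homomorphism $a$, the same computation read backwards shows that $a\widetilde\phi$ is a homomorphism. It is a continuous section covering $\varphi$, so it is a lift. This gives a bijection between lifts and $Z^1$, and the trivial cocycle corresponds to $\widetilde\phi$.

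\textbf{Step 2: gauge conjugacy matches the cohomology relation.} Two lifts are equivalent if $\widetilde\phi''(g)=b\,\widetilde\phi'(g)\,b^{-1}$ for some $b\in\mathcal G(P)$. I would first note that conjugating a lift by $b$ again gives a lift, since $b$ covers the identity and conjugation preserves the group law. Writing $\widetilde\phi'=a\widetilde\phi$, the computation is
\[
b\,a(g)\widetilde\phi(g)\,b^{-1}=b\,a(g)\,\bigl(\widetilde\phi(g)b^{-1}\widetilde\phi(g)^{-1}\bigr)\widetilde\phi(g)=a^b(g)\widetilde\phi(g).
\]
So the cocycle of the conjugated lift is exactly $a^b$. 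Hence the bijection of Step 1 descends to a bijection of quotient sets. It is pointed, and natural with respect to the choices made.

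\textbf{Main obstacle.} The algebra is routine, so the one place needing care is the topological bookkeeping. I need the action $\gamma\mapsto{}^{\widetilde\phi(g)}\gamma$ of $G$ on $\mathcal G(P)$ to be continuous, so that $H^1_{\mathrm{nab}}$ is a meaningful continuous cohomology set. I also need $g\mapsto a(g)\widetilde\phi(g)$ to be a continuous map into $\widehat{\operatorname{Isom}}_G(P)$. I would handle both by identifying $\mathcal G(P)\cong\Gamma(\operatorname{Ad}(P))$ with the compact-open topology and using that composition is jointly continuous on homeomorphism groups of locally compact Hausdorff spaces. If needed, I would invoke the conventions fixed earlier for Theorem~\ref{thm:intro-global}, where the same continuity was used to define $s^a$.
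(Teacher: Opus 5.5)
Your proposal is correct and follows the paper's proof essentially step for step. The paper likewise writes $\widetilde\phi'(g)=a(g)\widetilde\phi(g)$ and derives the crossed-homomorphism identity by expanding $\widetilde\phi'(g)\widetilde\phi'(h)$. It then shows that conjugating by $b\in\mathcal G(P)$ replaces $a$ by $a^b(g)=b\,a(g)\,{}^{\widetilde\phi(g)}b^{-1}$. Your extra topological remarks are harmless, though continuity of the conjugation action of $G$ on $\mathcal G(P)$ is not actually needed: $H^1_{\mathrm{nab}}$ is defined simply as a quotient of the set of continuous crossed homomorphisms.
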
 

The second part of the paper concerns local quaternionic torus actions. Yoshida~\cite{Yoshida2,Yoshida1} introduced local torus actions modeled on the coordinatewise action of \(T^n\) on \(\mathbb C^n\) and studied their topology and their liftings to principal bundles. In our previous work \cite{bg25,bg_top}, we developed a quaternionic counterpart modeled on the coordinatewise action of \(Q^n\) on \( \mathbb H^n\).

Let \(M\) be a smooth \(4n\)-dimensional manifold equipped with a local \(Q^n\)-action \(\mathcal Q\). Such an action is represented by a weakly regular atlas whose transition maps are equivariant up to automorphisms of \(Q^n\). The corresponding automorphisms determine a \v{C}ech class, or equivalently a monodromy representation
\[
\rho:\pi_1(B_M)\longrightarrow\operatorname{Aut}(Q^n),
\]
where \(B_M\) is the orbit space of the local quaternionic torus action on \(M\), and \(\pi_M:M\longrightarrow B_M\) is the orbit map.

Passing to the universal covering \(p:\widetilde B_M\longrightarrow B_M\) trivializes the monodromy class. We define the pulled-back space \(\widetilde M:=p^*M\) as the fiber product
\[
\widetilde M=\left\{ (\widetilde b,x)\in \widetilde B_M\times M:p(\widetilde b)=\pi_M(x)\right\}.
\]
The projection
\[
\widetilde M\longrightarrow \widetilde B_M,
\qquad
(\widetilde b,x)\longmapsto \widetilde b,
\]
is the pullback of the orbit map \(\pi_M:M\to B_M\).

Since \(\widetilde B_M\) is simply connected, the pulled-back local \(Q^n\)-action has trivial monodromy and therefore is induced by a globally defined locally regular \(Q^n\)-action on \(\widetilde M\). This action combines with the deck transformations of
\[
p:\widetilde B_M\longrightarrow B_M
\]
to give an action of \(Q^n\rtimes_\rho\pi_1(B_M)\) on \(\widetilde M\).

Let now
\[
Q^k\longrightarrow P\overset{\pi_P}{\longrightarrow}M
\]
be a principal quaternionic torus bundle. Its pullback to \(\widetilde M\) is the principal \(Q^k\)-bundle
\[
Q^k\longrightarrow \widetilde P\longrightarrow \widetilde M
\]
defined by
\[
\widetilde P=\left\{ (\widetilde b,p_0)\in \widetilde B_M\times P:p(\widetilde b)=\pi_M(\pi_P(p_0))
\right\}.
\]

The lifting problem now has two stages. First, the globally defined \(Q^n\)-action on \(\widetilde M\) must admit a lift to the pulled-back bundle \(\widetilde P\), in the sense of the global lifting problem above. We call such a lift a \emph{preliminary lift} because it only lifts the \(Q^n\)-part of the untwisted action; it is not yet required to be compatible with the deck transformations. Thus a preliminary lift is a homomorphism \[ L:Q^k\longrightarrow\operatorname{Isom}(\widetilde P) \] covering the global action of \(Q^k\) on \(\widetilde M\). We write $L_u:=L(u),\;\; u\in Q^k$.

Second, the preliminary lift must be compatible with the canonical lift of the deck-transformation action. For \(a\in\pi_1(B_M)\), let \[ D_a:\widetilde P\longrightarrow\widetilde P \] denote the canonical lift of the deck transformation \(a\), given by \[ D_a(\widetilde b,p_0)=(\widetilde b\cdot a^{-1},p_0). \] The compatibility condition is \[ D_a^{-1}L_{\rho(a)(u)}D_a=L_u,\;\;\forall a\in\pi_1(B_M),\;u\in Q^k.\] 

The failure of this condition to hold is measured by the map \[ \Theta_L:\pi_1(B_M)\times Q^k \longrightarrow \mathcal G(\widetilde P) \] defined by \[ \Theta_L(a,u) = D_a^{-1}L_{\rho(a)(u)}D_aL_u^{-1}. \] We call \(\Theta_L\) the \emph{descent defect} of the preliminary lift \(L\) (see Definition~\ref{def:descent-defect}). It takes values in the gauge group because both \(D_a^{-1}L_{\rho(a)(u)}D_a\) and \(L_u\) cover the same homeomorphism of \(\widetilde M\). The equality \[ \Theta_L(a,u)=1\;\;\forall a\in\pi_1(B_M),\;u\in Q^k, \] is precisely the condition that the preliminary lift descends to a lift of the original local \(Q^k\)-action on \(M\).

The map \(\Theta_L\) satisfies nonabelian compatibility identities in the \(Q^k\)-variable and in the \(\pi_1(B_M)\)-variable (see Lemmas~\ref{lem:defect-Q-cocycle} and \ref{lem:defect-Gamma-cocycle}). These identities are the descent analogues of the factor-set identity in the global problem. We also compute how \(\Theta_L\) changes when the preliminary lift is modified by a gauge-valued crossed homomorphism, and we show that its trivializability is independent of the chosen preliminary lift.

We say (see Definition~\ref{def:descent-trivializable}) that the descent defect \(\Theta_L\) is \emph{trivializable} if there exists a continuous gauge-valued crossed homomorphism \[ \tau:Q^n\longrightarrow\mathcal G(\widetilde P), \] such that, after modifying the preliminary lift by \[ L^\tau_u:=\tau(u)L_u, \] the modified lift \(L^\tau\) satisfies the deck-compatibility condition \[ D_a^{-1}L^\tau_{\rho(a)(u)}D_a=L^\tau_u,\;\;\forall a\in\pi_1(B_M),\;u\in Q^k.\] Equivalently, \[ D_a^{-1}\tau\bigl(\rho(a)(u)\bigr)D_a\, \Theta_L(a,u)\, \tau(u)^{-1} = 1,\;\;\forall a\in\pi_1(B_M),\;u\in Q^k. \] 

\begin{theorem}[Local lifting criterion; Theorem~\ref{thm:complete-local-lifting}] \label{thm:intro-local} The local \(Q^n\)-action on a smooth manifold \(M\) lifts to \(P\) if and only if: 
\begin{enumerate} 
\item[(1)] the globalized \(Q^n\)-action on \(\widetilde M\) admits a preliminary lift to \(\widetilde P\); and 
\item[(2)] the associated nonabelian descent defect is trivializable. \end{enumerate} 
\end{theorem}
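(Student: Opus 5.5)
We prove the two implications separately. The guiding principle is that lifts of the local \(Q^n\)-action on \(M\) to \(P\) correspond exactly to lifts of the untwisted action of \(Q^n\rtimes_\rho\pi_1(B_M)\) on \(\widetilde M\) to \(\widetilde P\) whose deck part is the canonical lift \(D\). Indeed, \(p:\widetilde B_M\to B_M\) is a regular covering with group \(\pi_1(B_M)\). Hence \(\widetilde M\to M\) and \(\widetilde P\to P\) are the quotient maps by the free deck actions \((\widetilde b,x)\mapsto(\widetilde b\cdot a^{-1},x)\), and \(D_a\) is precisely this deck action on \(\widetilde P\). Bundle automorphisms of \(\widetilde P\) that commute with all \(D_a\) therefore descend to automorphisms of \(P\), and conversely.

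\emph{Necessity.} Suppose the local action lifts to \(P\). Locally, over each chart of a weakly regular atlas, this means a local \(Q^n\)-action on \(P\) by bundle automorphisms, compatible with the twisted transition data. Pull this local lift back along \(\widetilde P\to P\). The result is a local lift of the pulled-back local action on \(\widetilde M\), and that local action has trivial monodromy. Since \(\widetilde B_M\) is simply connected, the local lifts glue, exactly as the local action itself globalizes, into a homomorphism \(L:Q^n\to\operatorname{Isom}(\widetilde P)\) covering the global action. This gives (1). Because \(L\) is a pullback, it satisfies the equivariance \(D_a^{-1}L_{\rho(a)(u)}D_a=L_u\): both sides are pulled back from the same local automorphism of \(P\), with the twist by \(\rho(a)\) accounting for the chart change. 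Hence \(\Theta_L\equiv 1\), and the descent defect is trivializable with \(\tau=1\). We then invoke the paper's result that trivializability is independent of the chosen preliminary lift. This shows that for every preliminary lift \(L'\), the defect \(\Theta_{L'}\) is trivializable.

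\emph{Sufficiency.} Let \(L\) be a preliminary lift and \(\tau\) a continuous gauge-valued crossed homomorphism trivializing \(\Theta_L\). First, \(L^\tau_u=\tau(u)L_u\) is again a homomorphism: the crossed homomorphism identity \(\tau(uv)=\tau(u)\,{}^{L_u}\tau(v)\) gives \(L^\tau_{uv}=L^\tau_uL^\tau_v\). This is the computation behind Theorem~\ref{thm:intro-enumeration}. Second, \(L^\tau_u\) is continuous in \(u\) and covers the global action on \(\widetilde M\). Third, it satisfies \(D_aL^\tau_uD_a^{-1}=L^\tau_{\rho(a)(u)}\). Together with the fact that \(a\mapsto D_a\) is a homomorphism, this means \((u,a)\mapsto L^\tau_uD_a\) defines an action of \(Q^n\rtimes_\rho\pi_1(B_M)\) on \(\widetilde P\) by principal \(Q^k\)-bundle automorphisms. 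Now descend. For each small open set \(U\subset B_M\) evenly covered by \(p\), choose a sheet \(\widetilde U\). The restriction of \(L^\tau\) to the part of \(\widetilde P\) over \(\widetilde U\) transports, via the identification with \(P|_{\pi_M^{-1}(U)}\), to a \(Q^n\)-action on \(P|_{\pi_M^{-1}(U)}\) covering the local action in the chart. Changing the sheet by \(a\) conjugates by \(D_a\), so by the twisted equivariance the result changes by the automorphism \(\rho(a)\). This is exactly the transition rule defining a lift of a local action. Hence the data glue to a lift of \(\mathcal Q\) to \(P\).

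\emph{Main obstacle.} The delicate step is the identification between local lifts on \(M\) and \(\rho\)-twisted deck-equivariant global lifts on \(\widetilde M\), in both directions. This requires checking that the automorphisms of \(Q^n\) appearing in the transition maps of the weakly regular atlas match \(\rho(a)\) with the correct conventions, so that the condition is \(D_a^{-1}L_{\rho(a)(u)}D_a\) and not its inverse-twisted variant. I would handle this by working with a good cover of \(B_M\) and its lifts to \(\widetilde B_M\). Each chart of \(\widetilde M\) is then a copy of a chart of \(M\), and the monodromy becomes visible as the automorphism relating two lifts of the same chart. The remaining verifications, namely continuity and the homomorphism property of \(L^\tau\), are routine consequences of the definitions and of the cocycle identities in Lemmas~\ref{lem:defect-Q-cocycle} and \ref{lem:defect-Gamma-cocycle}.
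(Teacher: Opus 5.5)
Your proposal is correct and follows essentially the paper's route. The algebraic core is the same: modifying a preliminary lift by a continuous crossed homomorphism, the transformation law for $\Theta_L$, and independence of the preliminary lift are Theorem~\ref{thm:nonabelian-descent} with Lemmas~\ref{lem:gauge-modified-global-lift} and~\ref{lem:defect-transformation}. The descent identification you single out as the main obstacle is exactly Proposition~\ref{prop:local-lifting-chartwise}; the paper partly absorbs it into Definition~\ref{def:local-Q-lifting}, which defines a lift of the local action as a deck-compatible lift on $\widetilde P$. The only cosmetic difference is in necessity: you take $\tau=1$ for the pulled-back lift and appeal to Proposition~\ref{prop:descent-independence}, whereas the paper writes $L'_u=\tau(u)L_u$ for an arbitrary preliminary lift $L$ and applies the transformation formula directly, which amounts to the same computation.
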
 

In the abelian torus case, the conjugation action of the structural group is trivial, and therefore the gauge group is canonically \[ \mathcal G(P)\cong C(M,T^k). \] Consequently, the global factor set takes values in the abelian group \(C(M,T^k)\) and determines an ordinary continuous group-cohomology class \[ [\omega_s]\in H^2\bigl(G;C(M,T^k)\bigr), \] as in Hattori--Yoshida~\cite{hatyos}. Similarly, in the local torus case, the descent identities become ordinary cocycle identities for the \(\pi_1(B_M)\)-action on the group of \(T^n\)-cocycles with values in \(C(\widetilde M,T^k)\), giving the obstruction group used by Yoshida~\cite{Yoshida1}. Thus, after replacing \(Q^k\) by \(T^k\), the formal identities appearing in the present paper specialize to the cohomological identities of the classical torus theory. In the quaternionic case, these same identities take values in nonabelian gauge groups, and the obstructions are formulated in terms of trivializability rather than ordinary cohomology classes.

\paragraph*{Organization of paper.}The paper is organized as follows. Section~\ref{sec:global-lifting} develops the global lifting obstruction and the enumeration of lifts. Section~\ref{sec:untwisting} recalls local quaternionic torus actions, constructs the untwisted action on the universal covering of the orbit space, and gives the chartwise description of lifts of local actions. Finally, Section~\ref{sec:nonabelian-descent} defines the descent defect and proves the local lifting criterion.

\textbf{Assumptions.}
All spaces are assumed to be Hausdorff, paracompact, and of the homotopy type of CW complexes. In Section~\ref{sec:global-lifting}, \(X\) is a connected, locally finite CW complex with a continuous action of a compact Lie group \(G\). In Sections~\ref{sec:untwisting}--\ref{sec:nonabelian-descent}, \(M\) is a smooth \(4n\)-manifold equipped with a local \(Q^n\)-action, and the orbit space \(B_M\) is assumed to admit a universal covering.

\paragraph*{Acknowledgments.}
The authors are grateful to Georgios Raptis and Stephen Theriault for their careful reading of an earlier version of this work and for valuable comments and suggestions. We thank Georgios Raptis in particular for comments which helped clarify the distinction between Borel-image data, pseudo-lifts, liftability and discussions around Remark~\ref{rem:moduli-future-work}. We also thank Stephen Theriault for drawing our attention to the role of gauge groups over \(S^4\) and related homotopy-theoretic issues, which are discussed in Remark~\ref{rem:gauge-groups-stabilization}.

\section{The global lifting problem}
\label{sec:global-lifting}

In this section we formulate the global lifting problem for principal quaternionic torus bundles. The discussion follows the classical framework of Hattori--Yoshida for principal torus bundles \cite{hatyos}, with the important difference that now the structure group, \(Q^k=(\text{Sp}(1))^k\), is nonabelian.

\subsection{Principal quaternionic torus bundles and lifts of actions}
Let \(G\) be a compact Lie group acting continuously on a connected, locally finite CW complex \(X\). Equivalently, we are given a continuous homomorphism \[ \phi:G\longrightarrow \operatorname{Homeo}(X), \] where \(\operatorname{Homeo}(X)\) is endowed with the compact-open topology. Following \cite{Hat02, michell}, let $EG\longrightarrow BG$
be a universal principal \(G\)-bundle. The Borel construction of \(X\) is
\[
X_G:=EG\times_G X,
\]
and we consider the associated fibration $X\longrightarrow X_G\longrightarrow BG$.
Choosing a point \(e_0\in EG\), the inclusion of the fiber is denoted by
\[
j:X\longrightarrow X_G,
\qquad
x\longmapsto [e_0,x].
\]
Different choices of \(e_0\in EG\) give homotopic inclusions of the fiber, so the induced map \(j^*\) is independent of this choice up to the usual canonical identification.  Since $H_G^*(X;R)=H^*(X_G;R)$, the induced map
\[
j^*:H^*(X_G;R)\longrightarrow H^*(X;R)
\]
is the usual forgetful map from equivariant cohomology of X to ordinary cohomology of $X_G$
(see details in \cite[p.14]{hatyos}).
Following Hopkinson~\cite[Proposition~6.2.3]{ho}, it is $BQ=B\text{Sp}(1)=\mathbb HP^\infty$, thus \(BQ^k\simeq(\mathbb HP^\infty)^k\), and the integral cohomology ring of \(BQ^k\) is
\[
H^*(BQ^k;\mathbb Z)
\cong
\mathbb Z[u_1,\ldots,u_k],
\qquad |u_i|=4.
\]

If $Q^k\longrightarrow P\overset{\pi_P}{\longrightarrow} X$
is a principal \(Q^k\)-bundle with classifying map $f_P:X\longrightarrow BQ^k$,
we define
\[
c_2(P)
=
(c_2^1(P),\ldots,c_2^k(P))
\in H^4(X;\mathbb Z^k)
\]
by \(c_2^i(P)=f_P^*(u_i)\).

\begin{remark}
The class \(c_2(P)\) is the first characteristic datum of a principal \(Q^k\)-bundle. It should not be understood as a complete classification invariant in general, since the classifying space \(BQ^k\) contains higher homotopy information beyond its first nontrivial degree--\(4\) cohomology classes. In this paper \(c_2(P)\) is used only as a primary obstruction to equivariance.
\end{remark}

\noindent Throughout the text, let $\mathcal{Q}^k(Y)$
denote the set of isomorphism classes of principal \(Q^k\)-bundles over a given space \(Y\).

\begin{definition}\label{def:global-lift}
Let \(G\) be a compact Lie group acting continuously on \(X\). We denote by
\[
\mathcal Q^k_G(X)\subseteq \mathcal Q^k(X)
\]
the subset consisting of those isomorphism classes represented by principal \(Q^k\)-bundles
\[
Q^k\longrightarrow P\overset{\pi_P}{\longrightarrow}X
\] for which the \(G\)-action on \(X\) admits a lift to \(P\). Explicitly, this means that there exists a continuous action
\[
G\times P\longrightarrow P,\qquad (g,p)\longmapsto g\cdot p,
\]
such that
\[
\pi_P(g\cdot p)=g\cdot \pi_P(p),\;\;
g\cdot(p\cdot q)=(g\cdot p)\cdot q,\;\;\;\forall g\in G,p\in P,q\in Q^k.
\]
 Equivalently, the action of \(G\) on \(P\) is by principal \(Q^k\)-bundle automorphisms covering the given action of \(G\) on \(X\).
\end{definition}

\noindent Next we define the Borel-image subset. The inclusion of the fiber of the Borel construction of $X$, the map $j:X\longrightarrow X_G$,
naturally induces a restriction map
\[
j^*:\mathcal{Q}^k(X_G)\longrightarrow \mathcal{Q}^k(X).
\]
\begin{definition}
We define the \emph{Borel-image subset} \(\mathcal Q^k_{\mathrm{Bor}}(X;G) \subseteq \mathcal Q^k(X)\) by\[\mathcal Q^k_{\mathrm{Bor}}(X;G) := \operatorname{im}\left(j^*:\mathcal{Q}^k(X_G)\longrightarrow\mathcal{Q}^k(X)
\right).\] 
\end{definition} 
Thus an isomorphism class $[P]\in\mathcal Q^k(X)$
belongs to \(\mathcal Q^k_{\mathrm{Bor}}(X;G)\) if and only if there exists a
principal \(Q^k\)-bundle
\[
Q^k\longrightarrow R\longrightarrow X_G
\]
such that \(P\cong j^*R
\) as principal \(Q^k\)-bundles over \(X\). This is the quaternionic analogue of the Borel-image subset used by Hattori--Yoshida for principal torus bundles.

\begin{prop}
\label{prop:lifting-implies-borel}
Let \(G\) be a compact Lie group acting on \(X\). If a principal \(Q^k\)-bundle $Q^k\longrightarrow P\longrightarrow X$
admits a lift of the \(G\)-action, then
\[
[P]\in \mathcal Q^k_{\mathrm{Bor}}(X;G).
\]
In other words, $\mathcal Q^k_G(X)\subseteq \mathcal Q^k_{\mathrm{Bor}}(X;G).$
\end{prop}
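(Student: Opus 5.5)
The plan is to use the Borel construction on the total space, exactly as sketched in the introduction. Suppose the $G$-action on $X$ lifts to a continuous action $G\times P\to P$ by principal $Q^k$-bundle automorphisms, as in Definition~\ref{def:global-lift}. Since the lifted $G$-action commutes with the right $Q^k$-action, $G\times Q^k$ acts on $P$. We form
\[
P_G:=EG\times_G P,
\]
with $G$ acting diagonally. The right $Q^k$-action $[e,p]\cdot q:=[e,p\cdot q]$ is well defined, because $g\cdot(p\cdot q)=(g\cdot p)\cdot q$. The map $\pi:P_G\to X_G$, $[e,p]\mapsto[e,\pi_P(p)]$, is well defined and $Q^k$-invariant, because $\pi_P(g\cdot p)=g\cdot\pi_P(p)$.

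\textbf{Step 1: $P_G\to X_G$ is a principal $Q^k$-bundle.} This step needs the most care. Freeness and transitivity on fibres are formal. If $[e,p]$ and $[e',p']$ lie over the same point, then after acting by some $g$ we may take $e=e'$ and $\pi_P(p)=\pi_P(p')$, up to an element of the stabilizer of $e$. Since $G$ acts freely on $EG$, that stabilizer is trivial, so $p'=p\cdot q$ for a unique $q$.

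Local triviality is the real content. Over an open set $V\subseteq BG$ where $EG|_V\cong V\times G$ via a local section $\sigma$, we have identifications $X_G|_V\cong V\times X$ and $P_G|_V\cong V\times P$, given by $[\sigma(v)g,x]\mapsto(v,g x)$. Under these identifications $\pi$ becomes $\mathrm{id}_V\times\pi_P$. It is therefore locally trivial over sets of the form $V\times U$, where $U$ trivializes $P$.

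We must also check that these identifications are homeomorphisms in the quotient topology. This uses continuity of the action and compactness of $G$; for a compact group the quotient maps are proper. Alternatively, one can invoke the standard fact that $EG\times_G(-)$ sends $G$-equivariant principal bundles to principal bundles.

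\textbf{Step 2: identify $j^*P_G$ with $P$.} With $j(x)=[e_0,x]$, define $\Psi:P\to j^*P_G$ by $p\mapsto(\pi_P(p),[e_0,p])$. This map is $Q^k$-equivariant and covers the identity of $X$. Since $G$ acts freely on $EG$, every element of $P_G$ over $[e_0,x]$ has a unique representative of the form $[e_0,p]$ with $\pi_P(p)=x$, so $\Psi$ is bijective. A morphism of principal $Q^k$-bundles over the same base is automatically an isomorphism, so $\Psi$ is one. Hence $[P]=j^*[P_G]\in\mathcal Q^k_{\mathrm{Bor}}(X;G)$.

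\textbf{Independence of $e_0$.} The remark preceding the definition of $j^*$ shows that different choices of $e_0$ give homotopic inclusions $j$. By homotopy invariance of pullbacks over paracompact bases, the resulting class in $\mathcal Q^k(X)$ does not depend on this choice.

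The main obstacle is the topological verification in Step 1, namely local triviality of $P_G$ and the homeomorphism claims. The argument reduces this to local sections of $EG\to BG$ together with the freeness of the $G$-action on $EG$.
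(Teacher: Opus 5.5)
Your proposal is correct and follows the same route as the paper: form $P_G=EG\times_G P$, note that it is a principal $Q^k$-bundle over $X_G$, and identify $j^*P_G\cong P$ via $p\mapsto[e_0,p]$. The paper simply asserts that $P_G\to X_G$ is a principal bundle, so your Step~1 only fills in details it leaves implicit. Local triviality there comes from local sections of $EG\to BG$; compactness of $G$ is not actually needed, only openness of the orbit maps.
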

 \begin{proof}
If the \(G\)-action lifts to \(P\), then, by Definition \ref{def:global-lift}, \(P\) is a \(G\)-equivariant principal \(Q^k\)-bundle; the \(G\)-action on \(P\) covers the given \(G\)-action on \(X\) and commutes with the principal right \(Q^k\)-action. Therefore the Borel construction
\[
P_G:=EG\times_G P
\]
is a principal \(Q^k\)-bundle over $X_G:=EG\times_G X$.

Let again $j:X\longrightarrow X_G$
be the inclusion of the fiber, determined by a choice of point \(e_0\in EG\). The restriction of \(P_G\) along \(j\) is naturally isomorphic to \(P\),
\[
j^*P_G\cong P.
\]
Indeed, under this identification the point \(p\in P\) corresponds to the class \([e_0,p]\in EG\times_G P\). Hence the isomorphism class \([P]\in\mathcal Q^k(X)\) lies in the image of
the restriction map $j^*:\mathcal{Q}^k(X_G)
\longrightarrow
\mathcal{Q}^k(X)$.
\end{proof}

\noindent We have thus established the inclusions
\[
\mathcal Q^k_G(X)\subseteq \mathcal Q^k_{\mathrm{Bor}}(X;G)\subseteq \mathcal Q^k(X).
\]
The second inclusion simply means that \(\mathcal Q^k_{\mathrm{Bor}}(X;G)\) is defined as a subset of \(\mathcal Q^k(X)\) by restriction along the fiber inclusion \(j:X\to X_G\). Proposition~\ref{prop:lifting-implies-borel} says that a genuine lift of the \(G\)-action forces the underlying principal \(Q^k\)-bundle to come from the Borel construction.

\noindent Applying the characteristic class \(c_2\) to this Borel condition gives a necessary cohomological condition, as the next corollary explains.

\begin{cor}[Equivariant \(c_2\)-obstruction]
\label{cor:c2-obstruction-global}
If the \(G\)-action on \(X\) lifts to \(P\), then
\[
c_2(P)\in H^4(X;\mathbb Z^k)
\]
lies in the image of the forgetful map
\[
H_G^4(X;\mathbb Z^k)
=
H^4(X_G;\mathbb Z^k)
\longrightarrow
H^4(X;\mathbb Z^k).
\]
Equivalently, \(c_2(P)\) admits an equivariant extension.
\end{cor}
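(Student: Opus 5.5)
The plan is to deduce the corollary directly from Proposition~\ref{prop:lifting-implies-borel} using the naturality of characteristic classes. Suppose the \(G\)-action lifts to \(P\). By Proposition~\ref{prop:lifting-implies-borel}, the Borel construction \(P_G=EG\times_G P\) is a principal \(Q^k\)-bundle over \(X_G\), and \(j^*P_G\cong P\) as principal \(Q^k\)-bundles over \(X\). Let \(f_{P_G}:X_G\to BQ^k\) be a classifying map for \(P_G\). Then \(f_{P_G}\circ j:X\to BQ^k\) classifies \(j^*P_G\cong P\). Since isomorphic bundles have homotopic classifying maps, we may take \(f_P\simeq f_{P_G}\circ j\). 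This is the only point where the homotopy-theoretic hypotheses on \(X\) and \(X_G\) are used, through the classification theorem for principal bundles over paracompact spaces.

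Next, define the equivariant class
\[
c_2^G(P):=c_2(P_G)=\bigl(f_{P_G}^*(u_1),\ldots,f_{P_G}^*(u_k)\bigr)\in H^4(X_G;\mathbb Z^k)=H^4_G(X;\mathbb Z^k).
\]
By functoriality of singular cohomology and homotopy invariance, for each \(i\) we have
\[
j^*f_{P_G}^*(u_i)=(f_{P_G}\circ j)^*(u_i)=f_P^*(u_i)=c_2^i(P).
\]
Hence \(j^*c_2^G(P)=c_2(P)\). Therefore \(c_2(P)\) lies in the image of the forgetful map and admits the equivariant extension \(c_2(P_G)\).

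The statement is labelled ``equivalently'', and that equivalence is simply the definition of an equivariant extension: a class in \(H^4_G(X;\mathbb Z^k)\) restricting to \(c_2(P)\). No further argument is needed for it.

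The only mild obstacle is well-definedness: the independence of \(j^*\) from the basepoint \(e_0\), and the fact that \(c_2\) depends only on the isomorphism class. Both were already noted before the statement, since different choices of \(e_0\) give homotopic fiber inclusions and classifying maps are unique up to homotopy. So I expect the proof to be a short naturality argument with no genuine difficulty.
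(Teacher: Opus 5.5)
Your proposal is correct and follows the paper's proof: the paper likewise uses Proposition~\ref{prop:lifting-implies-borel} to obtain a bundle \(R\to X_G\) (your \(P_G\)) with \(j^*R\cong P\), sets \(c_2^G(P):=c_2(R)\), and concludes \(j^*c_2^G(P)=c_2(P)\) by naturality. The only difference is that you write out the naturality step explicitly via classifying maps, whereas the paper leaves it implicit.
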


\begin{proof}
By Proposition~\ref{prop:lifting-implies-borel}, there exists a principal \(Q^k\)-bundle $R\longrightarrow X_G$
such that $P\cong j^*R$.
Define
\[
c_2^G(P):=c_2(R)\in H^4(X_G;\mathbb Z^k)=H_G^4(X;\mathbb Z^k).
\]
By naturality of Chern classes, $j^*c_2^G(P)=c_2(P)$.
\end{proof}

\begin{remark}
For comparison, let \(\mathcal T^n(X)\) denote the set of isomorphism classes of principal \(T^n\)-bundles over \(X\), and let \[ \mathcal T^n_G(X)\subseteq \mathcal T^n(X) \] denote the subset consisting of those classes which admit a lift of the \(G\)-action. Similarly, define the Borel-image subset \[\mathcal T^n_{\mathrm{Bor}}(X;G):=\operatorname{im}\left( j^*:\mathcal T^n(X_G)\longrightarrow \mathcal T^n(X)\right)\subseteq\mathcal T^n(X).\] Hattori--Yoshida~\cite[Theorem~1.1]{hatyos} prove that \[ \mathcal T^n_G(X) = \mathcal T^n_{\mathrm{Bor}}(X;G). \]
 Since principal \(T^n\)-bundles are classified by their first Chern classes, this is equivalent to the statement that a lift of the $G-$ action exists if and only if the Chern class of the principal bundle admits an equivariant extension.

Back to principal \(Q^k\)-bundles, Proposition~\ref{prop:lifting-implies-borel} gives the necessary direction. The converse is not automatic, because the gauge group of a principal \(Q^k\)-bundle is generally nonabelian. This is the first difference in the quaternionic lifting problem.
\end{remark}

\subsection{The group extension associated to a bundle}

The purpose of this subsection is to isolate the obstruction-theoretic object attached to the bundle \(P\). A lift of the \(G\)-action is the same thing as a continuous splitting homomorphism of a natural extension whose kernel is the gauge group of \(P\). We first construct this extension and then use it in the next subsection to define pseudo-liftings and their factor sets.

Let $Q^k\longrightarrow P\overset{\pi_P}{\longrightarrow}X$ be a principal \(Q^k\)-bundle over $X$. Let $\operatorname{Isom}(P)$ denote the group of principal bundle automorphisms of \(P\), endowed with the compact-open topology. Since \(X\) is assumed to be a locally finite CW complex, it is locally compact, and under our standing assumptions the compact-open topology makes \(\operatorname{Isom}(P)\) a topological group. Every element of \(\operatorname{Isom}(P)\) induces a homeomorphism of \(X\). We write
\[
q:\operatorname{Isom}(P)\longrightarrow \operatorname{Homeo}(X),
\]
for the resulting homomorphism. Given an action $\phi:G\longrightarrow \operatorname{Homeo}(X)$,
define
\[
\widehat{\operatorname{Isom}}_G(P)
:=
\{(g,F)\in G\times \operatorname{Isom}(P):\phi(g)=q(F)\}.
\]
Let $\mathcal G(P):=\operatorname{Aut}_X(P)$
be the gauge group of \(P\), that is the subgroup of \(\operatorname{Isom}(P)\) consisting of principal \(Q^k\)-bundle automorphisms covering the identity map of \(X\) (cf. \cite[Chapter 7]{Husemoller}),
\[
\mathcal G(P)=\{F\in\operatorname{Isom}(P):\pi_P\circ F=\pi_P\}.\]
Let also \[
G_P:=
\{g\in G: \phi(g)^*P\cong P\},
\]
be the subgroup of \(G\) consisting of those elements whose action on \(X\) preserves the isomorphism class of \(P\). 
There is always a short exact sequence
\[
1\longrightarrow \mathcal G(P)
\longrightarrow
\widehat{\operatorname{Isom}}_G(P)
\longrightarrow
G_P
\longrightarrow 1,
\]
and in particular, the obvious projection
\[
\widehat{\operatorname{Isom}}_G(P)\longrightarrow G_P
\]
is surjective if and only if \(G_P=G\), or, equivalently, if and only if $\phi(g)^*P\cong P, \;\forall g\in G$. It is then easy to see that a lift of the \(G\)-action to \(P\) is precisely a continuous splitting homomorphism $\widetilde\phi:G\longrightarrow \widehat{\operatorname{Isom}}_G(P)$ of this extension.

\iffalse
The gauge group has the standard description
\[
\mathcal G(P)\cong \Gamma(\operatorname{Ad}(P)),
\]
where
\[
\operatorname{Ad}(P)=P\times_{Q^k}Q^k
\]
and \(Q^k\) acts on itself by conjugation. Since \(Q^k\) is nonabelian, the gauge group \(\mathcal G(P)\) is generally nonabelian.
\fi

\begin{lemma}
\label{lem:borel-implies-invariance}
If $[P]\in \mathcal Q^k_{\mathrm{Bor}}(X;G)$,
then \(G_P=G\).
\end{lemma}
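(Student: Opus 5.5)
The plan is to show that for each $g\in G$ the map $\phi(g):X\to X$ and the fiber inclusion $j$ fit together up to homotopy. This will give $\phi(g)^*P\cong P$ directly from the homotopy invariance of pullbacks of principal bundles. Fix $R\to X_G$ with $P\cong j^*R$, and let $j=j_{e_0}$ be defined by the point $e_0\in EG$.

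First I will check that $j\circ\phi(g)=j_{e_0 g}$, where $j_{e}(x):=[e,x]$. Using the convention $[e\cdot g, x]=[e, g\cdot x]$ in $EG\times_G X$, we get
\[
j(\phi(g)x)=[e_0,g\cdot x]=[e_0 g,x]=j_{e_0g}(x).
\]
If the paper's convention is a left action on $EG$, the same identity holds with $g^{-1}$ in place of $g$, which changes nothing below.

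Next I will use that $EG$ is contractible, in particular path connected. Choose a path $\gamma:[0,1]\to EG$ from $e_0$ to $e_0g$. Then $H(x,t)=[\gamma(t),x]$ is a homotopy $j_{e_0}\simeq j_{e_0g}$. This is the fact the paper already used when it noted that different choices of $e_0$ give homotopic fiber inclusions. Hence
\[
j\circ\phi(g)\simeq j:X\to X_G .
\]

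Finally I will apply homotopy invariance of pullbacks. Principal bundles pulled back along homotopic maps are isomorphic. This is valid because $X$ is a CW complex, hence paracompact, and every principal bundle over a paracompact base is numerable. Therefore
\[
\phi(g)^*P\cong\phi(g)^*j^*R=(j\circ\phi(g))^*R\cong j^*R\cong P .
\]
Since $g\in G$ was arbitrary, this shows $g\in G_P$ for every $g$, that is, $G_P=G$.

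The only delicate point is the bookkeeping of the left/right action conventions in the Borel construction, and that is harmless. No step is a genuine obstacle: the argument uses only the path-connectedness of $EG$ and the homotopy invariance of pullbacks.
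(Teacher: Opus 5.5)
Your proposal is correct and follows essentially the same route as the paper's proof: you identify $j\circ\phi(g)$ with the fiber inclusion at a translated base point of $EG$, join the two base points by a path in the contractible $EG$, and conclude $\phi(g)^*P\cong P$ by homotopy invariance of pullbacks. Your explicit bookkeeping of the Borel convention (giving $e_0g$ where the paper writes $e_0g^{-1}$) and your paracompactness justification for homotopy invariance are, if anything, slightly more careful than the paper's version.
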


\begin{proof}
By assumption, there exists a principal \(Q^k\)-bundle $R\longrightarrow X_G$
such that $P\cong j^*R$.
Let \(g\in G\). We need to show that $\phi(g)^*P\cong P$.
The maps
\[
j:X\longrightarrow X_G,
\qquad x\longmapsto [e_0,x],
\]
and
\[
j_g:X\longrightarrow X_G, \qquad x\longmapsto [e_0,\phi(g)(x)]
\]
are homotopic as inclusions of fibers of the fibration $X\longrightarrow X_G\longrightarrow BG$.
Indeed, since \(EG\) is contractible, the points \(e_0\) and \(e_0g^{-1}\) are joined by a path in \(EG\), and this path gives a homotopy between \(j_g\) and
\(j\). Therefore,
\[
\phi(g)^*P \cong \phi(g)^*j^*R \cong j_g^*R \cong j^*R \cong P,
\]
hence \(g\in G_P\).
\end{proof}

\subsection{Pseudo-lifts and the factor set}
 When \(G_P=G\), every element of \(G\) is covered by at least one principal \(Q^k\)-bundle automorphism of \(P\), and we may regard \(\widehat{\operatorname{Isom}}_G(P)\longrightarrow G\) as the projection of the automorphism extension associated with \(P\). We start this subsection with two necessary definitions.

\begin{definition}
\label{def:pseudo-lift}
Assume \(G_P=G\). A \emph{pseudo-lift} of the \(G\)-action to \(P\) is a continuous section
\[
s:G\longrightarrow \widehat{\operatorname{Isom}}_G(P)
\]
of this projection.
\end{definition}

\begin{remark}
\label{rem:lifts-as-homomorphic-pseudolifts}
  By the previous definition, \(s(g)\) is a principal \(Q^k\)-bundle automorphism of \(P\) covering \(\phi(g)\), but \(s\) is not required to satisfy the group law. A lift in the sense of Definition~\ref{def:global-lift} is equivalently a pseudo-lift $\widetilde\phi:G\longrightarrow \widehat{\operatorname{Isom}}_G(P)$
which is a group homomorphism. Indeed, a lifted \(G\)-action on \(P\) assigns to each \(g\in G\) a principal \(Q^k\)-bundle automorphism \(\widetilde\phi(g)\) covering \(\phi(g)\), and the action law is precisely the identity $\widetilde\phi(gh)=\widetilde\phi(g)\widetilde\phi(h)$
for all \(g,h\in G\). Conversely, any homomorphic pseudo-lift defines a lifted \(G\)-action on \(P\) by $g\cdot p:=\widetilde\phi(g)(p)$.
\end{remark}

\noindent Recall that the kernel of $\widehat{\operatorname{Isom}}_G(P)\longrightarrow G$
is the gauge group $\mathcal G(P)=\operatorname{Aut}_X(P)
=
\{F\in\operatorname{Isom}(P):\pi_P\circ F=\pi_P\}$.
Equivalently,
\[
\mathcal G(P)\cong \Gamma(\operatorname{Ad}(P)),
\]
where
\[
\operatorname{Ad}(P)=P\times_{Q^k}Q^k
\]
and \(Q^k\) acts on itself by conjugation. Since \(Q^k\) is nonabelian, this gauge group is generally nonabelian.

\begin{definition}\label{def:factor-set}
Given a pseudo-lift \(s\), consider the map
\[
\omega_s:G\times G\longrightarrow \mathcal G(P)
\]
defined by
\[
\omega_s(g,h)
=
s(g)s(h)\bigl(s(gh)\bigr)^{-1}.
\]
The map $\omega_s$ is called the \emph{factor set} associated to the pseudo-lift \(s\).
\end{definition}

Indeed, \(\omega_s(g,h)\) belongs to the gauge group because both \(s(g)s(h)\) and \(s(gh)\) cover the same homeomorphism \(\phi(gh)\) of \(X\). Therefore their quotient \(s(g)s(h)\bigl(s(gh)\bigr)^{-1}\) covers the identity map of \(X\), and hence lies in \(\mathcal G(P)\).

The factor set \(\omega_s\) measures the failure of \(s\) to be a homomorphism. In particular, \(\omega_s=1\) if and only if \(s(g)s(h)=s(gh) \) for every \(g,h\in G\), that is, if and only if \(s\) is a lift in the sense of Definition~\ref{def:pseudo-lift}.

Furthermore, associativity in \(\widehat{\operatorname{Isom}}_G(P)\) implies the nonabelian factor-set identity

\[
\omega_s(g,h)\,\omega_s(gh,\ell)
=
{}^{s(g)}\!\bigl(\omega_s(h,\ell)\bigr)\,
\omega_s(g,h\ell).
\]
where we have set
\[
{}^{s(g)}\gamma
:=
s(g)\gamma s(g)^{-1},
\qquad
\gamma\in\mathcal G(P).
\]

Observe that if \(a:G\to \mathcal G(P)\) is a continuous map, then
\[
s^a(g):=a(g)s(g)
\]
is another pseudo-lift. Its factor set is
\[
\omega_{s^a}(g,h)
=
a(g)\,{}^{s(g)}a(h)\,\omega_s(g,h)\,a(gh)^{-1}.
\]

\begin{definition}\label{def:factor-set-trivializable}
We say that the factor set \(\omega_s\) is \emph{trivializable} if there exists a continuous map $a:G\to \mathcal G(P)$
such that $\omega_{s^a}(g,h)=1$
for all \(g,h\in G\).
\end{definition}

\begin{lemma}
\label{lem:trivializability-independent}
The trivializability of the factor set is independent of the choice of pseudo-lift.
\end{lemma}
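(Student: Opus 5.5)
The plan is to show that any two pseudo-lifts differ by a continuous gauge-valued map, and that modifications by such maps compose. This makes the class of pseudo-lifts reachable from one pseudo-lift by gauge modification equal to the set of all pseudo-lifts. Trivializability of \(\omega_s\) is then the single statement ``some pseudo-lift is a homomorphism,'' which does not mention \(s\).

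\emph{Step 1: comparison map.} Let \(s,s'\) be two pseudo-lifts. For each \(g\in G\), both \(s'(g)\) and \(s(g)\) cover \(\phi(g)\). Hence \(b(g):=s'(g)s(g)^{-1}\) covers the identity, so \(b(g)\in\mathcal G(P)\), and by construction \(s'=s^{b}\). Continuity of \(b:G\to\mathcal G(P)\) follows from continuity of \(s,s'\) and of multiplication and inversion in the topological group \(\operatorname{Isom}(P)\). This is the point where we use the standing assumption, recorded before the statement, that the compact-open topology makes \(\operatorname{Isom}(P)\) a topological group. The projection of \(\widehat{\operatorname{Isom}}_G(P)\) to the \(\operatorname{Isom}(P)\) factor is continuous, so the same holds inside the extension.

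\emph{Step 2: composition of modifications.} For continuous \(a,b:G\to\mathcal G(P)\) we have
\[
(s^{b})^{a}(g)=a(g)b(g)s(g)=s^{ab}(g),
\]
where \(ab\) is the pointwise product, again continuous and gauge-valued. Inverses behave the same way: \((s^{b})^{b^{-1}}=s\).

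\emph{Step 3: conclusion.} Suppose \(\omega_s\) is trivializable, so there is a continuous \(a\) with \(\omega_{s^a}=1\). Given another pseudo-lift \(s'=s^{b}\), set \(a':=a\,b^{-1}\), which is continuous and gauge-valued. Then \((s')^{a'}=s^{a b^{-1} b}=s^{a}\), whose factor set is trivial, so \(\omega_{s'}\) is trivializable. The argument is symmetric in \(s\) and \(s'\).

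Note that we never need the explicit transformation formula for \(\omega_{s^a}\); the whole argument runs at the level of pseudo-lifts. The only step needing care is the continuity claim in Step 1, namely that the pointwise quotient of two continuous sections is continuous as a map into \(\mathcal G(P)\) with the subspace topology. I expect this to be immediate from the topological group structure rather than a genuine obstacle.
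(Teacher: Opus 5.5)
Your proposal is correct and follows the same route as the paper. Both write $s'(g)=a(g)s(g)$ with $a\colon G\to\mathcal G(P)$ continuous, then transfer a trivializing modification from $s$ to $s'$ by composing gauge modifications. Your explicit choice $a'=ab^{-1}$ spells out the ``composing'' step that the paper leaves implicit, and you are right that the transformation formula for $\omega_{s^a}$, which the paper records, is not actually needed.
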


\begin{proof}
Let
\[
s,s':G\longrightarrow \widehat{\operatorname{Isom}}_G(P)
\]
be two pseudo-lifts. Since both \(s(g)\) and \(s'(g)\) cover the same homeomorphism \(\phi(g)\) of \(X\), there is a unique element
\[
a(g)\in \mathcal G(P)
\]
such that
\[
s'(g)=a(g)s(g).
\]
The map \(a:G\to\mathcal G(P)\) is continuous because \(s\) and \(s'\) are
continuous. The corresponding factor sets satisfy
\[
\omega_{s'}(g,h)
=
a(g)\,{}^{s(g)}a(h)\,\omega_s(g,h)\,a(gh)^{-1}.
\]
Thus \(s'\) is obtained from \(s\) by a gauge-valued \(1\)-cochain. Consequently, if \(\omega_s\) can be trivialized by some continuous map
\[
b:G\to\mathcal G(P),
\]
then \(\omega_{s'}\) can be trivialized by composing the corresponding gauge modifications. Conversely, the same argument applied to
\[
s(g)=a(g)^{-1}s'(g)
\]
shows that trivializability of \(\omega_{s'}\) implies trivializability of \(\omega_s\). 
\end{proof}

\begin{remark}
The identity
\[
\omega_s(g,h)\,\omega_s(gh,\ell)
=
{}^{s(g)}\!\bigl(\omega_s(h,\ell)\bigr)\,
\omega_s(g,h\ell)
\]
is the nonabelian analogue of the ordinary \(2\)-cocycle condition in group cohomology. Indeed, in the torus case the gauge group $\mathcal{G}(P)$ is canonically $C(X,T^n)$, the group of continuous maps, 
which is abelian. The conjugation action is written as a \(G\)-module action, and so the factor set
\[
\omega_s:G\times G\longrightarrow \mathcal G(P)
\]
satisfies the usual cocycle identity for continuous group cohomology with coefficients in \(\mathcal G(P)\). Therefore it determines an ordinary group cohomology class $o(P)\in H^2(G;C(X,T^n))$.
This is the obstruction class used by Hattori--Yoshida.
 In the present paper we do not need to construct a full nonabelian cohomology set; we only use the well-defined notion of whether the factor set is trivializable.
\end{remark}

\subsection{The global quaternionic obstruction}

The previous subsection associated an extension
\[
1\longrightarrow \mathcal G(P)
\longrightarrow
\widehat{\operatorname{Isom}}_G(P)
\longrightarrow
G
\longrightarrow 1,
\]
to \(P\), whenever the isomorphism class of \(P\) is preserved by the \(G\)-action. In this subsection we explain how the Borel-image condition produces continuous sections of this extension, i.e. pseudo-lifts. We then use the factor set of such a pseudo-lift to formulate the remaining obstruction for a pseudo-lift to be a lift, following Hattori--Yoshida \cite[Lemma~2.3]{hatyos}.

\begin{prop}
\label{prop:pseudolift-from-borel}
Suppose that $[P]\in \mathcal{Q}^k_\mathrm{Bor}(X;G)$. Then \(P\) admits a pseudo-lift of the \(G\)-action.
\end{prop}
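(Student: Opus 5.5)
We construct the pseudo-lift directly from a bundle $R\to X_G$ with $P\cong j^*R$, following Hattori--Yoshida \cite[Lemma~2.3]{hatyos}. First, Lemma~\ref{lem:borel-implies-invariance} gives $G_P=G$, so the projection $\widehat{\operatorname{Isom}}_G(P)\to G$ is surjective and a pseudo-lift makes sense. To get a \emph{continuous} section we use the $G$-space $\widetilde R:=\pi^*R$, where $\pi:EG\times X\to X_G$ is the quotient map. This is a principal $Q^k$-bundle over $EG\times X$, and $G$ acts on it by genuine bundle automorphisms: $g\cdot(e,x,r)=(eg^{-1},gx,r)$, which covers the diagonal action on $EG\times X$. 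Thus $\widetilde R$ is a $G$-equivariant principal $Q^k$-bundle over $EG\times X$, and the only remaining problem is to transport this to $X=\{e_0\}\times X$.

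Because $EG$ is contractible, the restriction $\widetilde R|_{\{e\}\times X}$ is isomorphic to $P$ for every $e$. The plan is to make these identifications vary continuously in $g$. Since $G$ is a compact Lie group, its action on $EG$ is free, so we may choose a continuous map $\gamma:G\times[0,1]\to EG$ with $\gamma(g,0)=e_0$ and $\gamma(g,1)=e_0g^{-1}$. Concretely, one may take a contraction $H$ of $EG$ and set $\gamma(g,t)$ to be the concatenation of $H(e_0,\cdot)$ with the reverse of $H(e_0g^{-1},\cdot)$, which is continuous in $g$. Pulling $\widetilde R$ back along $(g,t,x)\mapsto(\gamma(g,t),x)$ gives a bundle over $G\times[0,1]\times X$. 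A connection, or equivalently the homotopy lifting property applied uniformly with the extra parameter $g$, then yields a continuous family of isomorphisms
\[
\Phi_g:\widetilde R|_{\{e_0g^{-1}\}\times X}\longrightarrow \widetilde R|_{\{e_0\}\times X}\cong P
\]
covering the identity on $X$. Local finiteness of $X$ and paracompactness supply the partitions of unity needed to build such a connection on $G\times[0,1]\times X$.

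Now define $s(g):=\Phi_g\circ\bigl(g\cdot\bigr)|_{\widetilde R|_{\{e_0\}\times X}}$. The action of $g$ sends the fiber of $\widetilde R$ over $e_0$ to the fiber over $e_0g^{-1}$, covering $\phi(g)$ on $X$. Composing with $\Phi_g$, which covers the identity, gives an element of $\operatorname{Isom}(P)$ covering $\phi(g)$, so $s(g)\in\widehat{\operatorname{Isom}}_G(P)$. Each $s(g)$ commutes with the right $Q^k$-action, because both the $G$-action and parallel transport do. By construction $s$ is a set-theoretic section of the projection.

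The main obstacle is continuity of $s:G\to\operatorname{Isom}(P)$ in the compact-open topology. This requires joint continuity of $(g,p)\mapsto s(g)(p)$ and of its inverse, and, by local compactness of $P$ (which follows from local finiteness of $X$), that suffices. Joint continuity of the action on $\widetilde R$ is clear. For parallel transport I will argue that transport along the continuous family of paths $\gamma(g,\cdot)$ depends continuously on $(g,p)$, by working in local trivializations over a compact neighbourhood. If connections on a CW base are awkward, the fallback is to use the covering homotopy theorem for the compact parameter space $G$: view the family as a single bundle over $G\times[0,1]\times X$ and take a bundle isomorphism to the pullback of its restriction to $t=0$, which automatically gives continuity in $g$.
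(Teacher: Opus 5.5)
Your argument is correct, and its overall strategy matches the paper's. Both pull $R$ back to $EG\times X$, use the canonical $G$-action there, and then use contractibility of $EG$ and the covering homotopy theorem to move fibers back to $\{e_0\}\times X\cong P$. The difference is in how this transport is made continuous in $g$. The paper applies the covering homotopy theorem once, to the contraction $r_t\times\operatorname{id}_X$ of $EG\times X$ onto $\{e_0\}\times X$. This gives a single bundle map $\widetilde r_0:\Pi^*R\to\Pi^*R|_{\{e_0\}\times X}\cong P$ covering $(e,x)\mapsto(e_0,x)$, and the paper sets $s(g)=\widetilde r_0\circ(g\cdot)|_P$. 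Joint continuity of $(g,p)\mapsto s(g)(p)$ is then automatic, so what you call the main obstacle never arises. Your $\Phi_g$ is just the restriction of such an $\widetilde r_0$ to the fiber over $e_0g^{-1}$.

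Your $G$-parametrized family of paths buys something modest in return. You need the covering homotopy theorem only over $G\times[0,1]\times X$, whose base $G\times X$ is paracompact because $G$ is compact and $X$ is a CW complex. The paper needs it over $EG\times X\times[0,1]$ and relies on the standing assumption that $EG\times X$ is paracompact.

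You should present your fallback as the actual argument. Parallel transport along merely continuous paths on a CW base does not come from a connection without further work, and the fallback is the rigorous version. Two small inaccuracies are worth fixing. The existence of $\gamma$ comes from contractibility of $EG$, not from freeness of the action. Joint continuity of $G\times P\to P$ already implies continuity of $G\to C(P,P)$ in the compact-open topology, with no local compactness needed; that hypothesis matters only for the converse.
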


\begin{proof}
By assumption, there exists a principal \(Q^k\)-bundle
\[
Q^k\longrightarrow R\longrightarrow X_G
\]
such that \(P\cong j^*R\). Let
\[
\Pi:EG\times X\longrightarrow X_G
\]
be the quotient map, and let
\[
\Pi^*R\longrightarrow EG\times X
\]
be the pullback principal \(Q^k\)-bundle. Fix \(e_0\in EG\). The fiber
inclusion
\[
j:X\longrightarrow X_G,\qquad x\longmapsto [e_0,x],
\]
identifies \(P\cong \Pi^*R\big|_{\{e_0\}\times X}\). Since \(\Pi\) is constant on diagonal \(G\)-orbits in \(EG\times X\), the pullback bundle \(\Pi^*R\) carries a canonical \(G\)-action. Explicitly, if an element of \(\Pi^*R\) is written as a pair
\[
(z,q)\in (EG\times X)\times R, \qquad \pi_R(q)=\Pi(z),
\]
then we define \(g\cdot(z,q):=(gz,q)\). This is well defined because \(\Pi(gz)=\Pi(z)\). It defines a continuous action of \(G\) on \(\Pi^*R\) by principal \(Q^k\)-bundle automorphisms covering the diagonal action of \(G\) on \(EG\times X\). Moreover, it commutes with the principal right \(Q^k\)-action.

Let
\[
r_t:EG\longrightarrow EG,\qquad t\in[0,1],
\]
be a contraction of \(EG\) to \(e_0\), with
\[
r_1=\operatorname{id}_{EG}, \qquad r_0\equiv e_0.
\]
Consider the homotopy
\[
r_t\times\operatorname{id}_X:EG\times X\longrightarrow EG\times X.
\]
Since \(EG\times X\) has the homotopy type of a CW complex and our bundles are principal bundles over paracompact spaces, the covering homotopy theorem for principal bundles applies. Starting from the identity bundle map \(\widetilde r_1=\operatorname{id}_{\Pi^*R}\) covering \(r_1\times\operatorname{id}_X=\operatorname{id}_{EG\times X}\), we obtain a continuous family of principal \(Q^k\)-bundle maps \(\widetilde r_t:\Pi^*R\longrightarrow \Pi^*R \) covering \(r_t\times\operatorname{id}_X.\) In particular,
\(\widetilde r_0:\Pi^*R\longrightarrow \Pi^*R\) is a principal \(Q^k\)-bundle map covering
\[
r_0\times\operatorname{id}_X:(e,x)\longmapsto(e_0,x).
\]
Therefore
\[
\widetilde r_0(\Pi^*R)\subseteq\Pi^*R\big|_{\{e_0\}\times X}\cong P.
\]

For \(g\in G\), define
\[
s(g):P\longrightarrow P
\]
as follows. Identify \(P\) with \(\Pi^*R\big|_{\{e_0\}\times X}\) and set \(s(g)(p):=\widetilde r_0(g\cdot p).\) Since the canonical \(G\)-action on \(\Pi^*R\) commutes with the principal right \(Q^k\)-action, and since \(\widetilde r_0\) is a principal \(Q^k\)-bundle map, the map \(s(g)\) is a principal \(Q^k\)-bundle map.

We now check the map covered by \(s(g)\). If \(p\in P\) lies over \(x\in X\), then, after identifying \(P\) with the restriction of \(\Pi^*R\) over \(\{e_0\}\times X\), the point \(g\cdot p\) lies over \((ge_0,\phi(g)(x))\in EG\times X.\) Applying \(\widetilde r_0\), which covers \(r_0\times\operatorname{id}_X\), sends the base point to \((e_0,\phi(g)(x)).\) Hence \(\pi_P(s(g)(p))=\phi(g)(\pi_P(p)).\) Thus \(s(g)\) covers the homeomorphism \(\phi(g):X\to X\).

Since \(s(g)\) is a principal \(Q^k\)-bundle map covering a homeomorphism of \(X\), it is a principal \(Q^k\)-bundle isomorphism. Hence \(s(g)\in \operatorname{Isom}(P)\) and, more precisely, \(s(g)\in\widehat{\operatorname{Isom}}_G(P).\)

Finally, the map
\[
G\times P\longrightarrow P, \qquad (g,p)\longmapsto s(g)(p),
\]
is continuous because it is the restriction of the composite of the continuous canonical \(G\)-action on \(\Pi^*R\) and the continuous bundle map \(\widetilde r_0\). Therefore, by the compact-open topology on \(\widehat{\operatorname{Isom}}_G(P)\), we obtain a continuous map \(s:G\longrightarrow \widehat{\operatorname{Isom}}_G(P).\) By construction, the projection
\[
\widehat{\operatorname{Isom}}_G(P)\longrightarrow G
\]
sends \(s(g)\) to \(g\). Hence \(s\) is a continuous section, i.e. a pseudo-lift.
\end{proof}

\begin{prop}
\label{prop:global-obstruction}
Let \(P\to X\) be a principal \(Q^k\)-bundle whose class lies in \(\mathcal Q^k_{\mathrm{Bor}}(X;G)\), and let
\[
s:G\to \widehat{\operatorname{Isom}}_G(P)
\]
be a pseudo-lift. Then the \(G\)-action on \(X\) lifts to \(P\) if and only if the factor set \(\omega_s\) is trivializable.
\end{prop}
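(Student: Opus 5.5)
The plan is to prove the two implications separately. Throughout, we use Lemma~\ref{lem:borel-implies-invariance} to know that \(G_P=G\), so the extension \(1\to\mathcal G(P)\to\widehat{\operatorname{Isom}}_G(P)\to G\to 1\) is exact. By Proposition~\ref{prop:pseudolift-from-borel}, the given pseudo-lift \(s\) makes sense as a continuous section. The key dictionary is Remark~\ref{rem:lifts-as-homomorphic-pseudolifts}: a lift of the \(G\)-action is the same thing as a pseudo-lift that is a homomorphism. Equivalently, it is a pseudo-lift whose factor set is identically \(1\), since \(\omega_{s'}(g,h)=s'(g)s'(h)s'(gh)^{-1}\).

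\emph{Sufficiency.} Suppose \(\omega_s\) is trivializable. By Definition~\ref{def:factor-set-trivializable} there is a continuous \(a:G\to\mathcal G(P)\) with \(\omega_{s^a}\equiv 1\). We check the following.
\begin{enumerate}
\item[(i)] Each \(s^a(g)=a(g)s(g)\) lies in \(\widehat{\operatorname{Isom}}_G(P)\) over \(g\), because \(a(g)\) covers \(\operatorname{id}_X\).
\item[(ii)] The map \(s^a\) is continuous, being the product of continuous maps into the topological group \(\operatorname{Isom}(P)\).
\item[(iii)] The map \(s^a\) is a homomorphism, since \(\omega_{s^a}\equiv1\).
\end{enumerate}
By the Remark, \(g\cdot p:=s^a(g)(p)\) is then a lifted action. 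We also need continuity of the action map \(G\times P\to P\). This follows from continuity of \(s^a\) together with the evaluation map \(\operatorname{Isom}(P)\times P\to P\), which is continuous because \(P\) is locally compact (it is a bundle over the locally finite CW complex \(X\) with compact fiber). This is the same compact-open argument used at the end of the proof of Proposition~\ref{prop:pseudolift-from-borel}.

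\emph{Necessity.} Suppose a lift \(\widetilde\phi\) exists. Viewed as a map \(G\to\widehat{\operatorname{Isom}}_G(P)\), it is a continuous section, hence a pseudo-lift. Its continuity comes from continuity of the action \(G\times P\to P\) via the exponential law for the compact-open topology, again using local compactness of \(P\). Since \(s(g)\) and \(\widetilde\phi(g)\) cover the same \(\phi(g)\), set \(a(g):=\widetilde\phi(g)s(g)^{-1}\in\mathcal G(P)\). This \(a\) is continuous because inversion and multiplication in \(\operatorname{Isom}(P)\) are continuous. Then \(s^a=\widetilde\phi\), and \(\omega_{s^a}=\omega_{\widetilde\phi}\equiv 1\), so \(\omega_s\) is trivializable. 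Alternatively, one can cite Lemma~\ref{lem:trivializability-independent} applied to \(s\) and \(\widetilde\phi\), noting that \(\omega_{\widetilde\phi}\) is trivialized by \(a\equiv1\).

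The only genuinely delicate point is topological rather than algebraic. We must pass between continuity of \(s^a:G\to\operatorname{Isom}(P)\) and continuity of the action map \(G\times P\to P\), and we must know that \(\operatorname{Isom}(P)\) is a topological group, so that products and inverses of continuous maps stay continuous. The first point I would handle by the exponential law for locally compact Hausdorff \(P\). The second I would take from the standing assumption stated before the construction of \(\widehat{\operatorname{Isom}}_G(P)\). The algebraic content reduces to the transformation formula for \(\omega_{s^a}\), already recorded before Definition~\ref{def:factor-set-trivializable}.
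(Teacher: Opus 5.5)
Your argument is correct and follows the same route as the paper: the sufficiency direction is exactly the paper's observation that \(s^a\) is a continuous homomorphism, hence a lift, with your remarks on continuity of the action map via the evaluation map as a welcome addition. For necessity you are more careful than the paper, whose written argument only treats the case in which \(s\) itself is a homomorphism; your explicit choice \(a(g)=\widetilde\phi(g)s(g)^{-1}\), giving \(s^a=\widetilde\phi\) (equivalently, an appeal to Lemma~\ref{lem:trivializability-independent}), is what is needed to handle an arbitrary given pseudo-lift \(s\).
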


\begin{proof}
If \(s\) is already a homomorphism, then it is a lift and $
\omega_s(g,h)=1,\;\forall g,h\in G$. Hence \(\omega_s\) is trivializable. Conversely, suppose that \(\omega_s\) is trivializable. Then there exists a continuous map
\[
a:G\to \mathcal G(P)
\]
such that the modified pseudo-lift $s^a(g):=a(g)s(g)$ has trivial factor set, i.e. $\omega_{s^a}(g,h)=1$. Therefore $s^a(g)s^a(h)=s^a(gh)$ for all \(g,h\in G\). Thus \(s^a\) is a continuous homomorphism and hence defines a lift of the \(G\)-action to \(P\).
\end{proof}

Summarizing, we obtain the following global criterion.

\begin{thm}[Borel condition and factor-set obstruction]
\label{thm:global-q-HY}
Let \(G\) be a compact Lie group acting on a connected, locally finite CW complex \(X\), and let $
Q^k\longrightarrow P\longrightarrow X$
be a principal \(Q^k\)-bundle. Then the \(G\)-action on \(X\) lifts to \(P\) if and only if the following two conditions hold:
\begin{enumerate}
\item[(1)] \([P]\in \mathcal Q^k_{\mathrm{Bor}}(X;G)\);
\item[(2)] for one, and hence by Lemma~\ref{lem:trivializability-independent} for every, pseudo-lift $s:G\to \widehat{\operatorname{Isom}}_G(P)$,
the associated factor set $\omega_s(g,h)=s(g)s(h)s(gh)^{-1}$
is trivializable by a continuous gauge-valued \(1\)-cochain.
\end{enumerate}
Equivalently,
\[
\mathcal Q^k_G(X)
=
\{[P]\in \mathcal Q^k_{\mathrm{Bor}}(X;G):\omega_s \text{ is trivializable}\}.
\]
\end{thm}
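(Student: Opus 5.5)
The theorem will be assembled from the results already established in this section, proving the two implications separately.

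For the forward direction, suppose the \(G\)-action on \(X\) lifts to \(P\). Proposition~\ref{prop:lifting-implies-borel} then gives condition (1), \([P]\in\mathcal Q^k_{\mathrm{Bor}}(X;G)\). By Lemma~\ref{lem:borel-implies-invariance} this forces \(G_P=G\), so the extension \(1\to\mathcal G(P)\to\widehat{\operatorname{Isom}}_G(P)\to G\to 1\) is defined. By Remark~\ref{rem:lifts-as-homomorphic-pseudolifts}, the lift itself is a pseudo-lift \(\widetilde\phi\) whose factor set satisfies \(\omega_{\widetilde\phi}\equiv 1\). In particular \(\omega_{\widetilde\phi}\) is trivializable, using the constant map \(a\equiv 1\). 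Lemma~\ref{lem:trivializability-independent} then shows that \(\omega_s\) is trivializable for every pseudo-lift \(s\), which is condition (2).

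For the converse, assume (1) and (2). Proposition~\ref{prop:pseudolift-from-borel} supplies a pseudo-lift \(s\), so the quantifier ``for one, hence every'' in (2) is not vacuous. Proposition~\ref{prop:global-obstruction} then turns the trivializability of \(\omega_s\) into a continuous homomorphic section \(s^a\), which is a lift. The displayed set equality is just a restatement of the two directions together with the definition of \(\mathcal Q^k_G(X)\) as a set of isomorphism classes. Here we must check that both conditions depend only on the class \([P]\). A bundle isomorphism \(\Psi\colon P\to P'\) conjugates \(\widehat{\operatorname{Isom}}_G(P)\) onto \(\widehat{\operatorname{Isom}}_G(P')\) and \(\mathcal G(P)\) onto \(\mathcal G(P')\). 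It therefore carries pseudo-lifts to pseudo-lifts, factor sets to factor sets, and trivializing cochains to trivializing cochains.

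The step I expect to need the most care is Lemma~\ref{lem:trivializability-independent}, since its proof is only sketched. Concretely, if \(s'=a s\) and \(b\) trivializes \(\omega_s\), then \(s'' := b s = (b a^{-1}) s'\). This shows that \(\omega_{s'}\) is trivialized by the pointwise product \(g\mapsto b(g)a(g)^{-1}\). That map is continuous because \(\mathcal G(P)\) is a topological group in the compact-open topology, which holds since \(X\) is locally compact. The identity \(\omega_{s^{c}}(g,h)=c(g)\,{}^{s(g)}c(h)\,\omega_s(g,h)\,c(gh)^{-1}\) is then used only to see that \(s^{ba^{-1}}\) relative to \(s'\) agrees with \(s^b\) relative to \(s\). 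I would verify this composition explicitly rather than rely on the formula.
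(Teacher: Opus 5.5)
Your proposal is correct and follows the paper's route: the theorem is obtained by combining Proposition~\ref{prop:lifting-implies-borel}, Proposition~\ref{prop:pseudolift-from-borel}, Proposition~\ref{prop:global-obstruction} and Lemma~\ref{lem:trivializability-independent}, exactly as you do. Your forward direction is slightly more careful than the paper's proof of Proposition~\ref{prop:global-obstruction}, which only treats the case where the chosen $s$ is itself a homomorphism; you instead regard the given lift as a pseudo-lift with trivial factor set and invoke the independence lemma (equivalently, write the lift as $a(g)s(g)$ so that $a$ trivializes $\omega_s$), and your check that both conditions depend only on $[P]$ makes explicit a point the paper leaves implicit.
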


We now illustrate the preceding obstruction theory through examples over \(\mathbb H P^1\) and related quoric manifolds; for the latter, see \cite{ho}. We begin with the quaternionic Hopf fibration, corresponding to Chern class \(1\), where the obstruction vanishes and the standard \(Q\)-action lifts. We then show that the equivariant extension of the degree-four class \(c_2\) is not sufficient for liftability, already for the bundle of Chern class \(2\) over \(\mathbb H P^1\). Finally, we return to the general clutching construction over \(\mathbb H P^1\cong S^4\), where pseudo-lifts and their factor sets can be written explicitly.

\begin{example}[The quaternionic Hopf bundle]
\label{ex:quaternionic-hopf-lift}
Identify \(\mathbb H P^1\cong S^4.\) Let \(Q\) act on \(\mathbb H P^1\) by
\[
q\cdot[h_0:h_1]=[qh_0:h_1].
\]
Consider the quaternionic Hopf fibration
\[
Q\longrightarrow S^7\longrightarrow \mathbb H P^1.
\]
Here \(S^7=\{(h_0,h_1)\in\mathbb H^2:|h_0|^2+|h_1|^2=1\}\), with principal right \(Q\)-action \((h_0,h_1)\cdot r=(h_0r,h_1r)\). The action of \(Q\) on \(\mathbb H P^1\) lifts to \(S^7\) by \(q\cdot(h_0,h_1)=(qh_0,h_1)\). Indeed, \([qh_0:h_1]=q\cdot[h_0:h_1]\), and the lifted left \(Q\)-action commutes with the principal right \(Q\)-action:
\[
q\cdot\bigl((h_0,h_1)\cdot r\bigr)=(qh_0r,h_1r) = (qh_0,h_1)\cdot r.
\]
For later use, we recall the standard clutching coordinates on \(\mathbb H P^1\). Write
\[
\mathbb H P^1=D^4_+\cup_{S^3}D^4_-,
\]
where \(D^4_+\) has affine coordinate \(z=h_0h_1^{-1}\) and \(D^4_-\) has coordinate \(w=h_1h_0^{-1}=z^{-1}\) on the overlap. The action \(q\cdot[h_0:h_1]=[qh_0:h_1]\) is given in these coordinates by
\[
z\longmapsto qz,
\qquad
w\longmapsto wq^{-1}.
\]
For the Hopf bundle, the clutching function is \(\lambda(z)=z\). 

We now check the two conditions of Theorem~\ref{thm:global-q-HY} in this case. Here the acting group and the structural group are both \(Q\). Define
\[
\widetilde\phi:Q\longrightarrow \widehat{\operatorname{Isom}}_Q(S^7)
\]
by \(\widetilde\phi(q)(h_0,h_1)=(qh_0,h_1)\). This is a continuous homomorphism, since
\[
\widetilde\phi(q)\widetilde\phi(q')(h_0,h_1) = (qq'h_0,h_1) = \widetilde\phi(qq')(h_0,h_1).
\]
Moreover, \(\widetilde\phi(q)\) covers the given action on \(\mathbb H P^1\), because
\[
\pi(\widetilde\phi(q)(h_0,h_1))=[qh_0:h_1]=q\cdot[h_0:h_1],
\]
and it commutes with the principal right \(Q\)-action:
\[
\widetilde\phi(q)\bigl((h_0,h_1)\cdot r\bigr)=(qh_0r,h_1r)=\widetilde\phi(q)(h_0,h_1)\cdot r.
\]
Thus \(\widetilde\phi\) is a lift of the \(Q\)-action.

Consequently, Proposition~\ref{prop:lifting-implies-borel} gives
\[
[S^7]\in \mathcal Q^1_{\mathrm{Bor}}(\mathbb H P^1;Q).
\]
Equivalently, the Borel bundle
\((S^7)_Q\to(\mathbb H P^1)_Q\) restricts along the fiber inclusion to the original Hopf bundle. The second condition of Theorem~\ref{thm:global-q-HY} is also immediate: since \(\widetilde\phi\) is already a homomorphism, its factor set is identically trivial,
\[
\omega_{\widetilde\phi}(q,q')=\widetilde\phi(q)\widetilde\phi(q')\widetilde\phi(qq')^{-1}=1.
\]
In other words, both conditions of Theorem~\ref{thm:global-q-HY} are satisfied. Thus the quaternionic Hopf bundle gives the basic nontrivial example of a principal \(Q\)-bundle for which the global lifting obstruction vanishes.
\end{example}

\begin{example}[Equivariant \(c_2\)-extension is not sufficient]
\label{ex:c2-not-sufficient}
Keep the notation and \(Q\)-action on \(X=\mathbb H P^1\cong S^4\) from
Example~\ref{ex:quaternionic-hopf-lift}. Let \(Q\longrightarrow P_2\longrightarrow X\) be the principal \(Q\)-bundle with \(c_2(P_2)=2u\), where \(u\) is the generator of \(H^4(X;\mathbb Z)\). Equivalently, \(P_2\) is obtained by clutching along the equator \(S^3\subset S^4\) by the map
\[
\lambda:S^3\longrightarrow Q, \qquad \lambda(z)=z^2.
\]

The Borel construction \(X_Q=EQ\times_Q X\) is the quaternionic projectivization \(\mathbb H P(L\oplus\underline{\mathbb H})\) over \(BQ\), where \(L=EQ\times_Q\mathbb H\) is the universal quaternionic line bundle. Hence \(H_Q^4(X;\mathbb Z)\) contains a class \(U\) whose restriction to the fiber \(X\cong\mathbb H P^1\) is \(u\). Therefore \(c_2(P_2)=2u\) admits an equivariant cohomological extension, namely \(2U\). Thus the necessary condition of Corollary~\ref{cor:c2-obstruction-global} is satisfied.

We now show that the \(Q\)-action does not lift to \(P_2\). Suppose, for contradiction, that a lift exists. Recall from Example~\ref{ex:quaternionic-hopf-lift} that 
\[
\mathbb H P^1=D^4_+\cup_{S^3}D^4_-,
\]
where \(D^4_+\) has coordinate \(z\) and \(D^4_-\) has coordinate \(w=z^{-1}\) on the overlap. The action is given in these coordinates by \(z\mapsto qz\) on \(D^4_+\) and \(w\mapsto wq^{-1}\) on \(D^4_-\).

Let \(\widetilde\Phi:Q\times P_2\longrightarrow P_2\) be the assumed lifted action. Over the two trivializing charts \(P_2|_{D^4_\pm}\cong D^4_\pm\times Q\), write
\[
\widetilde\Phi_q^+ : D^4_+\times Q\longrightarrow D^4_+\times Q
\]
and
\[
\widetilde\Phi_q^- : D^4_-\times Q\longrightarrow D^4_-\times Q
\]
for the corresponding local expressions of the bundle automorphism \(\widetilde\Phi_q\).

The contraction \(z\mapsto tz\), \(t\in[0,1]\), is \(Q\)-equivariant on \(D^4_+\), since the \(Q\)-action is linear. Applying the covering homotopy theorem to this contraction, the lifted action over \(D^4_+\) is \(Q\)-equivariantly gauge-equivalent to the pullback of its restriction to the fiber over the fixed point. Equivalently, after changing the trivialization of \(P_2|_{D^4_+}\) by a \(Q\)-equivariant gauge transformation, the local expression of the lift may be assumed to be constant in the base variable. The same argument applies on \(D^4_-\). Thus, after such gauge changes on the two charts, there exist continuous homomorphisms
\[
\alpha_0,\alpha_\infty:Q\longrightarrow Q
\]
such that
\[
\widetilde\Phi_q^+(z,h)=(qz,\alpha_0(q)h), \qquad \widetilde\Phi_q^-(w,h)=(wq^{-1},\alpha_\infty(q)h).
\]

Compatibility with the clutching relation
\[
(z,h)_+\sim(z^{-1},z^2h)_-
\]
along the equator forces
\[
(qz)^2\alpha_0(q)=\alpha_\infty(q)z^2
\]
for all \(q,z\in Q\). Taking \(q=\mathbf j\), and writing \(A=\alpha_0(\mathbf j)\) and \(B=\alpha_\infty(\mathbf j)\), this condition becomes
\[
(\mathbf j z)^2A=Bz^2.
\]
Setting \(z=1\) gives \(B=-A\). Setting \(z=\mathbf i\) gives
\[
(\mathbf j\mathbf i)^2A=B\mathbf i^2,
\]
 hence \(A=B\). Thus \(A=-A\), which is impossible in \(Q=\operatorname{Sp}(1)\). Therefore the standard \(Q\)-action on \(\mathbb H P^1\) does not lift to \(P_2\).

\noindent In other words, \(c_2(P_2)\) admits an equivariant cohomological extension, but the action itself does not lift.
\end{example}

\begin{remark}[The quoric manifold example over the square]
\label{rem:square-quoric-c2-not-sufficient}
The same phenomenon occurs for the basic quoric manifold over the square \(I^2\) (see \cite[Example 4.3.4(2)]{ho}), namely
\[
M=\mathbb H P^1\times \mathbb H P^1,
\]
with its standard \(Q^2\)-action
\[
(q_1,q_2)\cdot([h_0:h_1],[g_0:g_1])
=
([q_1h_0:h_1],[q_2g_0:g_1]).
\]
Indeed, let
\[
P=\operatorname{pr}_1^*P_2\longrightarrow
\mathbb H P^1\times \mathbb H P^1,
\]
where \(P_2\to \mathbb H P^1\) is the principal \(Q\)-bundle of
Example~\ref{ex:c2-not-sufficient}. Then
\[
c_2(P)=2\,\operatorname{pr}_1^*u.
\]
Since \(c_2(P_2)\) admits an equivariant cohomological extension for the standard \(Q\)-action on the first factor, the class \(c_2(P)\) admits an equivariant cohomological extension for the product \(Q^2\)-action.

However, the \(Q^2\)-action on \(\mathbb H P^1\times \mathbb H P^1 \) does not lift to \(P\). If such a lift existed, then restricting it to the \(Q^2\)-invariant submanifold
\[
\mathbb H P^1\times\{[0:1]\}\cong \mathbb H P^1
\]
and to the subgroup \(Q\times\{1\}\subset Q^2\) would give a lift of the standard \(Q\)-action on \(\mathbb H P^1\) to \(P_2\). This contradicts Example~\ref{ex:c2-not-sufficient}.

Thus the failure of equivariant \(c_2\)-extension to characterize liftability already appears in the basic quoric manifold associated with the square.
\end{remark}

\begin{example}[The clutching calculation over \(\mathbb H P^1\)]
\label{ex:clutching-HP1}
We now return to the same clutching coordinates as in Example~\ref{ex:quaternionic-hopf-lift}, but allow arbitrary Chern class \(k\). Let \(P_k\to S^4\cong\mathbb H P^1\) be the principal \(Q\)-bundle obtained by clutching along the equator \(S^3=\{|z|=1\}\) via
\[
(z,h)_+\sim(z^{-1},z^k h)_-,
\qquad z\in S^3.
\]

For a fixed \(q\in Q\), consider on the upper chart the bundle map
\(\widetilde\Phi_q^+(z,h)=(qz,h)\). In order for this to define a bundle automorphism of \(P_k\), the lower chart expression must have the form
\[
\widetilde\Phi_q^-(w,h)=(wq^{-1},\mu_q(w)h),
\]
where the boundary value of \(\mu_q\) is forced by the clutching relation. A direct calculation gives
\[
\mu_q(w)=(qw^{-1})^k w^k,
\qquad w\in S^3.
\]

We first check that this boundary map extends over \(D^4_-\). Write
\[
\mu_q|_{S^3}=A_q\cdot B,
\qquad
A_q(w)=(qw^{-1})^k,
\qquad
B(w)=w^k.
\]
The group structure on \(Q\) induces the usual group structure on \([S^3,Q]\), and under this structure, pointwise multiplication of maps corresponds to addition of homotopy classes. Moreover, \(w\mapsto w^k\) represents \(k\in\pi_3(Q)\cong\mathbb Z\), the inversion map \(w\mapsto w^{-1}\) represents \(-1\), and left translation by a fixed element of \(Q\) is homotopic to the identity. Hence
\[
[\mu_q|_{S^3}]=[A_q]+[B]=-k+k=0\quad\text{in }\pi_3(Q).
\]
Thus \(\mu_q|_{S^3}\) is null-homotopic and extends continuously over \(D^4_-\). Consequently every element \(q\in Q\), and in particular every element of a finite subgroup \(G\subset Q\), is covered by at least one principal \(Q\)-bundle automorphism of \(P_k\).

We now choose these extensions coherently enough to compute the resulting factor set. Since \(G\) is finite, any set-theoretic choice of lifts gives a continuous pseudo-lift. For each \(q\in G\), choose a path \(p^q_r\) in \(Q\) from \(1\) to \(q\), and define an extension by
\[
\mu_q(ru)=\bigl(p^q_r u^{-1}\bigr)^k u^k,
\qquad
r\in[0,1],
\quad u\in S^3.
\]
This has boundary value
\[
\mu_q(u)=(qu^{-1})^k u^k
\]
when \(r=1\). Hence it defines a lift \(\widetilde\Phi_q\) of the element \(q\). Choosing these lifts for all \(q\in G\) gives a pseudo-lift
\[
s:G\longrightarrow \widehat{\operatorname{Isom}}_G(P_k),
\qquad
s(q)=\widetilde\Phi_q.
\]

The factor set
\[
\omega_s(q,q')=s(q)s(q')s(qq')^{-1}
\]
is a gauge transformation of \(P_k\). In the upper trivialization it is the identity. In the lower trivialization it is represented by the map
\[
\nu_{q,q'}(w)
=
\mu_q(wq'^{-1})\,\mu_{q'}(w)\,\mu_{qq'}(w)^{-1},
\qquad
w\in D^4_-.
\]
On the boundary \(S^3\), this map is identically \(1\). Indeed, using the boundary formula for \(\mu_q\), the factors \((wq'^{-1})^k\) and \((q'w^{-1})^k\) cancel because \(wq'^{-1}\) and \(q'w^{-1}\) are inverse elements of \(Q\). Therefore \(\nu_{q,q'}\) glues with the constant map \(1\) on the upper chart to give a based map
\[
S^4\longrightarrow Q.
\]
We denote its homotopy class by
\[
[\nu_{q,q'}]\in\pi_4(Q)\cong\mathbb Z/2.
\]

This class records the path component of the gauge transformation \(\omega_s(q,q')\). More precisely, the evaluation fibration for the gauge group of \(P_k\) gives, since \(\pi_1(Q)=\pi_0(Q)=0\), an isomorphism 
\[\pi_0(\mathcal G(P_k))\cong \pi_4(Q)\cong\mathbb Z/2.
\]
Under this isomorphism, the component of \(\omega_s(q,q')\) is represented by \([\nu_{q,q'}]\). Thus, after passing to path components of the gauge group, the factor set determines an ordinary \(2\)-cocycle
\[
\overline{\omega_s}:G\times G\longrightarrow \mathbb Z/2, \qquad \overline{\omega_s}(q,q')=[\nu_{q,q'}].
\]
Changing the pseudo-lift changes this cocycle by a coboundary. Since \(\operatorname{Aut}(\mathbb Z/2)\) is trivial, this gives a well-defined cohomology class
\[
[\overline{\omega_s}]\in H^2(G;\mathbb Z/2).
\]
If this class is nonzero, then the original gauge-valued factor set \(\omega_s\) cannot be trivialized.

For \(k=1\), this component-level obstruction vanishes. Indeed, with the preceding choice $\mu_q(ru)=(p^q_r u^{-1})u$,
a direct computation gives
\[
\nu_{q,q'}(ru)=p^q_r\,p^{q'}_r\,(p^{qq'}_r)^{-1}.
\]
This expression depends only on \(r\), not on \(u\). Hence the corresponding map \(S^4\to Q\) factors through \(S^1\). Since \(\pi_1(Q)=0\), it is null-homotopic. Therefore
\[
[\overline{\omega_s}]=0
\quad\text{for }k=1,
\]
in agreement with the explicit lift of the quaternionic Hopf bundle from Example~\ref{ex:quaternionic-hopf-lift}.
\end{example}

\begin{remark}[The case \(k\geq 2\)]
\label{rem:clutching-k2}
For \(k\geq 2\), the argument used for \(k=1\) no longer applies. The terms appearing in the lower-chart representative
\[
\nu_{q,q'}(w) =\mu_q(wq'^{-1})\,\mu_{q'}(w)\,\mu_{qq'}(w)^{-1}
\]
involve \(k\)-fold products of noncommuting quaternions. Thus the elementary cancellation which occurs for \(k=1\) does not persist for higher \(k\). Equivalently, one cannot simplify expressions of the form \((xy)^k\) to \(x^ky^k\) unless \(x\) and \(y\) commute.

This failure is not merely notational. For instance, the simpler candidate formula
\[
\widetilde\Phi_q^-(w,h)=(wq^{-1},q^k h)
\]
does not define a lift for \(k\geq 2\). Indeed, compatibility with the clutching relation would force
\[
(qz)^k=q^kz^k
\]
for all \(z\in S^3\), which is false in \(Q=\operatorname{Sp}(1)\) unless \(q\) commutes with \(z\).

A more conceptual way to view the problem is the following. A lift of a finite subgroup \(G\subset Q\) to \(P_k\) would require a representative
\[
\beta_k:S^3\longrightarrow Q
\]
of degree \(k\) for the clutching class, together with isotropy homomorphisms
\[
\rho_+,\rho_-:G\longrightarrow Q
\]
at the two poles, such that
\[
\beta_k(qz)=\rho_-(q)\,\beta_k(z)\,\rho_+(q)^{-1}
\]
for every \(q\in G\) and \(z\in S^3\). Thus the lifting problem can be viewed as an equivariant representative problem for the degree-\(k\) clutching map.

In special cases, this equivariance condition can be related to self-maps of spherical space forms. For example, if \(\rho_+\) is trivial and \(\rho_-\) is the inclusion \(G\hookrightarrow Q\), then such a \(\beta_k\) would descend to a map on the quotient \(S^3/G\). Hence the possible degrees are constrained by the homotopy theory of the spherical space form \(S^3/G\), equivalently by the induced maps on the cohomology of \(BG\). For \(G=Q_8\), this suggests that the vanishing of the component obstruction
\[
[\overline{\omega_s}]\in H^2(G;\mathbb Z/2)
\]
should be related to a finite calculation involving \(BQ_8\).
\end{remark}

The examples above are the first instances of a more general construction in toric topology. Principal quaternionic torus bundles arise naturally as quotients of quaternionic moment-angle manifolds by freely acting quaternionic subtori; see \cite{ho, gken} and the references therein.

\begin{remark}
\label{rem:moment-angle-quoric-examples}
Quaternionic moment-angle manifolds also provide a natural source of examples. Let \(P^n\) be a simple polytope with \(m\) facets, and let \(\mathcal Z_P^{\mathbb H}\) be the associated quaternionic moment-angle manifold. The coordinatewise action of \(Q^m=(\operatorname{Sp}(1))^m\) on \(\mathcal Z_P^{\mathbb H}\) contains, whenever a global characteristic map exists, a freely acting subgroup \(K\cong Q^{m-n}.\) The quotient is the corresponding quoric manifold \(\mathcal Z_P^{\mathbb H}/K\cong M(P,\lambda).\) Equivalently, one has a principal quaternionic torus bundle
\[
Q^{m-n}
\longrightarrow
\mathcal Z_P^{\mathbb H}
\longrightarrow
M(P,\lambda).
\]

The first example is obtained when \(P=[0,1]\). In this case \(n=1\) and \(m=2\), so \(m-n=1\). The quaternionic moment-angle manifold is
\[
\mathcal Z_{[0,1]}^{\mathbb H}\cong S^7,
\]
while the corresponding quoric manifold is
\[
M([0,1],\lambda)\cong\mathbb H P^1\cong S^4.
\]
Thus the above principal bundle becomes the quaternionic Hopf fibration
\[
Q\longrightarrow S^7\longrightarrow \mathbb H P^1.
\]
In this sense, the quaternionic Hopf fibration is the basic example of the moment-angle--to--quoric quotient construction.
\end{remark}

The examples above show how the factor-set obstruction can be computed in concrete quaternionic situations. They also indicate why the nonabelian case does not reduce formally to the classical torus case: the Borel-image condition, the existence of pseudo-lifts, and the trivializability of the factor set are logically distinct pieces of data. We close this subsection with two broader comments which place these phenomena in a more conceptual homotopy-theoretic framework.

\begin{remark}[A moduli-theoretic reformulation]
\label{rem:moduli-future-work}
The obstruction theory developed in this section is formulated in terms of isomorphism classes of principal bundles, pseudo-lifts, and gauge-valued factor sets, following the classical approach of Hattori--Yoshida \cite{hatyos}. A natural alternative, closer in spirit to the equivariant bundle theory of Lashof--May--Segal~\cite{lms}, would replace the discrete set \(\mathcal Q^k(X)\) by the mapping space \(\operatorname{Map}(X,BQ^k)\) and reformulate the lifting problem as a comparison between Borel-type equivariant mapping spaces, homotopy fixed points, and genuinely \(G\)-equivariant principal bundle data.

From this perspective, the Borel-image condition records homotopy-theoretic equivariant data, while the existence of a pseudo-lift and the trivializability of its factor set encode the additional conditions needed to obtain a lifted action on the total space of the bundle. Such a reformulation would package the coherence data more conceptually and would likely extend beyond the compact Lie group setting considered here.

We expect this perspective to recover the results of the present section and to clarify the relation between Borel-image data, pseudo-lifts, and the gauge-valued obstruction appearing in Example~\ref{ex:clutching-HP1}. We leave this direction as future work.
\end{remark}

The same issue can also be viewed from the perspective of gauge groups. Indeed, the factor set takes values in the gauge group of the bundle, and the topology of this gauge group already contains subtle unstable homotopy-theoretic information in the basic quaternionic case.

\begin{remark}[Gauge groups and stabilization]
\label{rem:gauge-groups-stabilization}
The gauge groups appearing in the quaternionic lifting problem are already highly nontrivial in the basic case of principal \(Q=\operatorname{Sp}(1)\) bundles over \(S^4\). If \(P_k\to S^4\) is the principal \(G\)-bundle classified by \(k\in [S^4,BG]\cong\mathbb Z\), then the classifying space of its gauge group is homotopy equivalent to the component of the mapping space which contains the classifying map of \(P_k\): \(B\mathcal G_k(G)\simeq \operatorname{Map}_k(S^4,BG)\). Moreover, the evaluation fibration gives \(\operatorname{Map}^*_k(S^4,BG) \longrightarrow\operatorname{Map}_k(S^4,BG)\xrightarrow{\mathrm{ev}} BG, \) and \(\operatorname{Map}^*_k(S^4,BG)\simeq \Omega^3_0G.\) The corresponding connecting map \(G\longrightarrow \Omega^3_0G\) is governed, after adjunction, by a Samelson product \(S^3\wedge G\longrightarrow G\). Thus the nonabelian nature of the gauge group is closely related to unstable Samelson products. This viewpoint is used systematically in the study of gauge groups over \(S^4\); see, for example, Kishimoto--Theriault--Tsutaya \cite{KishimotoTheriaultTsutayaG2}.

For example, in the case \(G=SU(2)\cong \operatorname{Sp}(1)\), the homotopy type of the gauge group of the bundle \(P_k\to S^4\) depends on the integer \(k\) through the classical condition (see \cite{Kono91})
\[
\mathcal G_k(SU(2))\simeq \mathcal G_{k'}(SU(2))
\quad\Longleftrightarrow\quad
(12,k)=(12,k').
\]
This illustrates that the coefficient group in the factor-set obstruction is not homotopically uniform as the quaternionic bundle varies.

There is also a stable comparison. The stabilization \(\operatorname{Sp}(1)\longrightarrow \operatorname{Sp}(\infty) \) passes from the unstable quaternionic gauge group to a stable infinite-loop-space setting. In that stable setting the relevant mapping spaces are homotopy-commutative, so the obstruction theory becomes closer in spirit to the abelian theory of torus bundles. The passage back from the stable theory to the unstable \(Q=\operatorname{Sp}(1)\) theory is precisely where unstable Samelson products and the \(k\)-dependence of the gauge group enter. We do not pursue this stable comparison further here.
\end{remark}

\subsection{Enumeration of lifts}

We now record the quaternionic analogue of the enumeration result of Hattori--Yoshida \cite[\S 4]{hatyos}. Assume that the \(G\)-action on \(X\) admits at least one lift to the principal \(Q^k\)-bundle $Q^k\longrightarrow P\longrightarrow X$.
Fix such a lift to be $\widetilde\phi:G\longrightarrow \widehat{\operatorname{Isom}}_G(P)$.
Then every other lift \(\widetilde\phi'\) differs from \(\widetilde\phi\) by a continuous gauge-valued map $a:G\longrightarrow \mathcal G(P)$,
namely $\widetilde\phi'(g)=a(g)\widetilde\phi(g)$.
The condition that \(\widetilde\phi'\) is a homomorphism is equivalent to
\[
a(gh)=a(g)\,{}^{\widetilde\phi(g)}a(h),
\]
where
\[
{}^{\widetilde\phi(g)}\gamma
=
\widetilde\phi(g)\gamma\widetilde\phi(g)^{-1}
\]
for \(\gamma\in\mathcal G(P)\). Thus \(a\) is a nonabelian crossed homomorphism. 

\begin{definition}
\label{def:gauge-equivalence-liftings}
Let
\[
\widetilde\phi,\widetilde\phi':
G\longrightarrow \widehat{\operatorname{Isom}}_G(P)
\]
be two lifts to $P$ of the \(G\)-action on \(X\). We say that \(\widetilde\phi\) and \(\widetilde\phi'\) are \emph{gauge equivalent} if there exists a gauge transformation \(b\in\mathcal G(P)\) such that
\[\widetilde\phi'(g) = b\,\widetilde\phi(g)\,b^{-1}
,\;\;\forall g\in G.\]
\end{definition}

\noindent With this terminology, the relevant equivalence relation on crossed homomorphisms is the following. If \(b\in\mathcal G(P)\), then conjugating a lift by \(b\) changes the corresponding crossed homomorphism \(a\) to
\[
a^b(g)
=
b\,a(g)\,{}^{\widetilde\phi(g)}b^{-1}.
\]
The quotient of the set of continuous crossed homomorphisms, i.e. maps $a:G\longrightarrow \mathcal G(P)$ such that $a(gh)=a(g)\,{}^{\widetilde\phi(g)}a(h)$, by this equivalence relation is denoted by $H^1_{\mathrm{nab}}\bigl(G;\mathcal G(P)_{\widetilde\phi}\bigr)$. The distinguished point is the class of the trivial crossed homomorphism \(a(g)=1\), corresponding to the fixed lift \(\widetilde\phi\).

\begin{remark}
\label{rem:nonabelian-H1-pointed-set}
The notation
\[
H^1_{\mathrm{nab}}
\bigl(G;\mathcal G(P)_{\widetilde\phi}\bigr)
\]
is used abusively and by analogy with group cohomology. However, no cochain complex or differential is involved here. The object is a pointed set of equivalence classes of crossed homomorphisms, not a cohomology group in the usual sense. We retain the notation because it is standard in nonabelian cohomology, but we emphasize throughout that it has a set-theoretic, rather than group-theoretic, nature.
\end{remark}

\begin{thm}[Enumeration of quaternionic lifts]
\label{thm:enumeration-global-liftings}
Let \(G\) be a compact Lie group acting on \(X\), and let $Q^k\longrightarrow P\longrightarrow X$
be a principal \(Q^k\)-bundle. Suppose that the \(G\)-action admits at least one lift $\widetilde\phi:G\longrightarrow \widehat{\operatorname{Isom}}_G(P)$.
Then the set of lifts of the \(G\)-action to \(P\), modulo conjugation by gauge transformations, is naturally identified with the pointed set $H^1_{\mathrm{nab}}(G;\mathcal G(P)_{\widetilde\phi})$.
\end{thm}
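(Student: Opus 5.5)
I would prove this by writing down an explicit map from continuous crossed homomorphisms to lifts, checking that it is a bijection, and then checking that it carries the equivalence relation defining $H^1_{\mathrm{nab}}$ exactly onto gauge conjugacy. Let $Z^1$ denote the set of continuous maps $a:G\to\mathcal G(P)$ with $a(gh)=a(g)\,{}^{\widetilde\phi(g)}a(h)$. Define $\Psi(a)(g):=a(g)\widetilde\phi(g)$.

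First, $\Psi(a)$ is a lift. It is a continuous section of $\widehat{\operatorname{Isom}}_G(P)\to G$, since multiplying by a gauge transformation does not change the covered homeomorphism $\phi(g)$, and continuity follows because $\operatorname{Isom}(P)$ is a topological group. Moreover,
\[
\Psi(a)(g)\Psi(a)(h)=a(g)\,{}^{\widetilde\phi(g)}a(h)\,\widetilde\phi(g)\widetilde\phi(h)=a(g)\,{}^{\widetilde\phi(g)}a(h)\,\widetilde\phi(gh),
\]
using that $\widetilde\phi$ is a homomorphism. This equals $\Psi(a)(gh)=a(gh)\widetilde\phi(gh)$ precisely when the crossed homomorphism identity holds. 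Equivalently, the factor-set formula $\omega_{s^a}(g,h)=a(g)\,{}^{s(g)}a(h)\,\omega_s(g,h)\,a(gh)^{-1}$, applied with $s=\widetilde\phi$ and $\omega_{\widetilde\phi}=1$, shows that $\Psi(a)$ is a homomorphism if and only if $a\in Z^1$. Combined with Remark~\ref{rem:lifts-as-homomorphic-pseudolifts}, this shows $\Psi$ maps $Z^1$ into the set of lifts.

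Second, $\Psi$ is a bijection. Given any lift $\widetilde\phi'$, set $a(g):=\widetilde\phi'(g)\widetilde\phi(g)^{-1}$. This lies in $\mathcal G(P)$ because both factors cover $\phi(g)$, exactly as in Lemma~\ref{lem:trivializability-independent}. It is continuous because multiplication and inversion in $\operatorname{Isom}(P)$ are continuous, and this continuity of the difference map is the one point I expect to need care. It is uniquely determined by $\widetilde\phi'$, which gives injectivity, and by the first step $a\in Z^1$, which gives surjectivity.

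Third, the equivalence relations correspond. For $b\in\mathcal G(P)$,
\[
b\,\Psi(a)(g)\,b^{-1}=b\,a(g)\,\widetilde\phi(g)b^{-1}\widetilde\phi(g)^{-1}\,\widetilde\phi(g)=\bigl(b\,a(g)\,{}^{\widetilde\phi(g)}b^{-1}\bigr)\widetilde\phi(g)=\Psi(a^b)(g).
\]
So gauge equivalence of lifts in the sense of Definition~\ref{def:gauge-equivalence-liftings} corresponds exactly to the relation $a\sim a^b$. This relation is an equivalence relation because $(a^b)^{c}=a^{cb}$ and $a^1=a$; this also follows from its being induced by the conjugation action of the group $\mathcal G(P)$. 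Hence $\Psi$ descends to a bijection $H^1_{\mathrm{nab}}(G;\mathcal G(P)_{\widetilde\phi})\to\{\text{lifts}\}/\mathcal G(P)$.

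Finally, the trivial cocycle maps to $\widetilde\phi$, so the bijection is one of pointed sets. Naturality, in the sense that changing the base lift $\widetilde\phi$ induces the evident twisting bijection, can be noted in a remark but is not essential.
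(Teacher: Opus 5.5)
Your proposal is correct and follows essentially the same route as the paper: write each lift as $a(g)\widetilde\phi(g)$, identify the homomorphism condition with the crossed-homomorphism identity, and compute that gauge conjugation by $b$ corresponds to $a\mapsto a^b$. You also check explicitly that $a\sim a^b$ is an equivalence relation (via $(a^b)^c=a^{cb}$) and that the distinguished points correspond; the paper leaves both of these implicit.
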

\begin{proof}
Fix a lift $\widetilde\phi:G\longrightarrow \widehat{\operatorname{Isom}}_G(P)$.
Let $\widetilde\phi':G\longrightarrow \widehat{\operatorname{Isom}}_G(P)$
be any other lift. Since \(\widetilde\phi(g)\) and \(\widetilde\phi'(g)\) both cover the same homeomorphism \(\phi(g)\) of \(X\), there is a unique gauge transformation $a(g)\in\mathcal G(P)$
such that $\widetilde\phi'(g)=a(g)\widetilde\phi(g)$.
Continuity of \(a\) follows from the continuity of \(\widetilde\phi\) and \(\widetilde\phi'\).

We now compute the condition that \(\widetilde\phi'\) is a homomorphism. For \(g,h\in G\), we have
\[
\widetilde\phi'(g)\widetilde\phi'(h)
=
a(g)\widetilde\phi(g)a(h)\widetilde\phi(h)
=
a(g)\,{}^{\widetilde\phi(g)}a(h)\,\widetilde\phi(gh),
\]
where
\[
{}^{\widetilde\phi(g)}a(h)
=
\widetilde\phi(g)a(h)\widetilde\phi(g)^{-1}.
\]
On the other hand, \(\widetilde\phi'(gh)=a(gh)\widetilde\phi(gh).\) Thus \(\widetilde\phi'\) is a lift if and only if \(a(gh)=a(g)\,{}^{\widetilde\phi(g)}a(h).\) Hence lifts of the \(G\)-action to \(P\) correspond bijectively to continuous crossed homomorphisms $a:G\longrightarrow\mathcal G(P)$.

It remains to identify gauge equivalence. Let \(b\in\mathcal G(P)\). The gauge conjugate of \(\widetilde\phi'\) by \(b\) is
\[
g\longmapsto b\,\widetilde\phi'(g)\,b^{-1}.
\]
Using \(\widetilde\phi'(g)=a(g)\widetilde\phi(g),\) we obtain \[b\,\widetilde\phi'(g)\,b^{-1} =b\,a(g)\widetilde\phi(g)b^{-1}=b\,a(g)\,{}^{\widetilde\phi(g)}b^{-1}\,\widetilde\phi(g).\] Therefore the crossed homomorphism corresponding to the conjugate lift is \(a^b(g)=b\,a(g)\,{}^{\widetilde\phi(g)}b^{-1},\) which is exactly the equivalence relation defining the pointed set $H^1_{\mathrm{nab}} \bigl(G;\mathcal G(P)_{\widetilde\phi}\bigr)$.
\end{proof}

\begin{cor}[Uniqueness under vanishing nonabelian $H^1$]
With the hypotheses of Theorem~\ref{thm:enumeration-global-liftings}, if
\[
H^1_{\mathrm{nab}}(G;\mathcal G(P)_{\widetilde\phi})
\]
is trivial, then the lifting of the $G$-action to $P$ is unique up to gauge conjugacy.
\end{cor}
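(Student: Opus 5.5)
The corollary follows directly from the bijection in Theorem~\ref{thm:enumeration-global-liftings}. I will first spell out what ``trivial'' means for the pointed set $H^1_{\mathrm{nab}}(G;\mathcal G(P)_{\widetilde\phi})$: it consists of a single element, necessarily the distinguished point. That point is the class of the trivial crossed homomorphism $a\equiv 1$, which corresponds to the fixed lift $\widetilde\phi$.

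Next, let $\widetilde\phi'$ be any lift of the $G$-action to $P$. As in the proof of Theorem~\ref{thm:enumeration-global-liftings}, there is a unique continuous $a:G\to\mathcal G(P)$ with $\widetilde\phi'(g)=a(g)\widetilde\phi(g)$. The homomorphism property of $\widetilde\phi'$ makes $a$ a crossed homomorphism, so $a$ defines a class in $H^1_{\mathrm{nab}}(G;\mathcal G(P)_{\widetilde\phi})$.

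By triviality, this class equals the distinguished class. So there is $b\in\mathcal G(P)$ with $a^b\equiv 1$, that is, $b\,a(g)\,{}^{\widetilde\phi(g)}b^{-1}=1$ for all $g\in G$. By the computation in the proof of Theorem~\ref{thm:enumeration-global-liftings}, $a^b$ is exactly the crossed homomorphism of the conjugate lift $g\mapsto b\,\widetilde\phi'(g)\,b^{-1}$. Hence $b\,\widetilde\phi'(g)\,b^{-1}=1\cdot\widetilde\phi(g)=\widetilde\phi(g)$ for all $g$. This shows $\widetilde\phi'$ is gauge equivalent to $\widetilde\phi$ in the sense of Definition~\ref{def:gauge-equivalence-liftings}, using $b^{-1}$ as the conjugating gauge transformation.

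The only step needing care is the direction of the equivalence relation. One has to check that the relation $a\sim a^b$ defining $H^1_{\mathrm{nab}}$ is symmetric, with $(a^b)^{b^{-1}}=a$, so that ``same class'' really yields a conjugating gauge transformation in the stated direction. This is a one-line verification from the formula for $a^b$. Everything else is a direct unwinding of Theorem~\ref{thm:enumeration-global-liftings}, and I do not expect any genuine obstacle.
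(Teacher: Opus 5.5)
Your argument is correct and is exactly the unwinding of the bijection in Theorem~\ref{thm:enumeration-global-liftings} that the paper leaves implicit; the corollary is stated there without a separate proof. The symmetry point you flag is settled by $(a^b)^c=a^{cb}$, so the relation on crossed homomorphisms is the orbit relation of a $\mathcal G(P)$-action, and since gauge equivalence of lifts is likewise an orbit relation, showing every lift is conjugate to $\widetilde\phi$ gives uniqueness for any pair of lifts.
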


\iffalse

{\color{blue} \begin{example}[Enumeration for the quaternionic Hopf bundle]
\label{ex:enumeration-hopf}
Consider again the quaternionic Hopf fibration
\[
Q\longrightarrow S^7\longrightarrow \mathbb H P^1
\]
from Example~\ref{ex:quaternionic-hopf-lift}, together with the action of \(Q\) on \(\mathbb H P^1\) given by \(q\cdot[h_0:h_1]=[qh_0:h_1]\). This action admits the lift \(\widetilde\phi(q)(h_0,h_1)=(qh_0,h_1)\). Therefore Theorem~\ref{thm:enumeration-global-liftings} applies.

In this case, the set of all lifts of the \(Q\)-action to the Hopf bundle, modulo gauge conjugacy, is identified with the pointed nonabelian set \(H^1_{\mathrm{nab}}
\bigl(
Q;\mathcal G(S^7)_{\widetilde\phi}
\bigr),
\) where \(\mathcal G(S^7)
\cong
\Gamma(\operatorname{Ad}(S^7))
\) is the gauge group of the quaternionic Hopf bundle and \(Q\) acts on it by conjugation through the fixed lift \(\widetilde\phi\).

Thus, even in the basic Hopf case, the enumeration of lifts is controlled by the nonabelian gauge group of the bundle. The distinguished point of this pointed set corresponds to the standard lift \(\widetilde\phi(q)(h_0,h_1)=(qh_0,h_1)\). \textcolor{red}{what is the use of this example, we just rewrite the theory for this fibration, right? I think Theriault meant an actual computation of the nonabelian set.}
\end{example}}

\fi

\begin{example}[Enumeration over a point]
\label{ex:enumeration-point}
Let \(X=\{\ast\}\), and let \(G=Q\) act trivially on \(X\). The unique principal \(Q\)-bundle over \(X\) is
\[
Q\longrightarrow Q\longrightarrow \{\ast\},
\]
where the principal right action is right multiplication.

A lift of the trivial action on \(X\) is the same thing as a continuous action of \(Q\) on the total space \(Q\) by principal bundle automorphisms. Every principal bundle automorphism of \(Q\to\{\ast\}\) is given by left multiplication by an element of \(Q\). Hence a lift is equivalently a continuous homomorphism
\[
\alpha:Q\longrightarrow Q,
\]
acting by
\[
q\cdot h=\alpha(q)h.
\]

Two such lifts are gauge equivalent precisely when the corresponding homomorphisms are conjugate in \(Q\). Therefore the set of lifts modulo gauge conjugacy is
\[
\operatorname{Hom}_{\mathrm{cts}}(Q,Q)/Q,
\]
where \(Q\) acts by conjugation on the target.

If we choose the trivial lift as the distinguished lift, then the induced action of \(Q\) on the gauge group \(Q\) is trivial. Thus the relevant pointed nonabelian set is
\[
H^1_{\mathrm{nab}}(Q;Q_{\mathrm{triv}}).
\]
Since \(Q=\operatorname{Sp}(1)\) is simple, every continuous homomorphism \(Q\to Q\) is either trivial or an automorphism. Moreover, every automorphism of is inner; see \cite[p.25]{ho}. Hence there are exactly two gauge equivalence classes of lifts: the trivial lift and the standard left multiplication lift. Therefore
\[
H^1_{\mathrm{nab}}(Q;Q_{\mathrm{triv}})
\cong
\{\text{trivial class},\text{standard class}\}
\]
as a pointed set.
\end{example}

\section{Lifts of untwisted local quaternionic torus actions}
\label{sec:untwisting}

\subsection{Quaternionic torus actions}
\label{subsec:orbit-space-globalization}

In this section, we recall from \cite{bg25} the orbit space of a local quaternionic toric action and the obstruction to the existence of a globally defined quaternionic torus action; proofs are omitted since they appear in detail in \cite{bg25}.

The group \(Q^n\) acts on \(\mathbb H^n\) by coordinatewise quaternionic multiplication:
\[
(q_1,\ldots,q_n)\cdot(h_1,\ldots,h_n)
=
(q_1h_1,\ldots,q_nh_n).
\]
This action is called the \emph{regular \(Q^n\)-action on \(\mathbb H^n\)}. The orbit space of the regular action is naturally identified with the standard positive cone
\[
\mathbb R_{\geq 0}^n
=
\lbrace (\xi_1,\ldots,\xi_n)\in\mathbb R^n: \xi_i\geq 0,\ i=1,\ldots,n
\rbrace.
\]
Indeed, the map $\mu_{\mathbb H^n}:\mathbb H^n\longrightarrow\mathbb R_{\geq 0}^n,\;\;
\mu_{\mathbb H^n}(h_1,\ldots,h_n)
=\bigl(|h_1|^2,\ldots,|h_n|^2\bigr)$,
is invariant under the regular \(Q^n\)-action and induces a homeomorphism $\mathbb H^n/Q^n\longrightarrow\mathbb R_{\geq 0}^n$.
Both spaces carry natural stratifications. The stratification of \(\mathbb R_{\geq 0}^n\) is determined by the number of coordinates which vanish, and under the above homeomorphism it corresponds to the orbit-type stratification of the regular \(Q^n\)-action.

We recall that a locally regular \(Q^n\)-action on a \(4n\)-dimensional manifold \(M\) is a globally defined \(Q^n\)-action such that every point of \(M\) has a \(Q^n\)-invariant neighborhood which, up to an automorphism of \(Q^n\), is equivariantly diffeomorphic to a \(Q^n\)-invariant open subset of \(\mathbb H^n\).

More generally, a local \(Q^n\)-action on \(M\) is represented by a weakly regular atlas
\[
\lbrace
(U_\alpha^M,\varphi_\alpha^M)
\rbrace_{\alpha\in A},
\]
where each $\varphi_\alpha^M:U_\alpha^M\longrightarrow\mathbb H^n$
is a homeomorphism onto a \(Q^n\)-invariant open subset and, for every nonempty overlap $U_{\alpha\beta}^M
=U_\alpha^M\cap U_\beta^M$,
there exists an automorphism $\rho_{\alpha\beta}\in\operatorname{Aut}(Q^n)$
such that the overlap map $\varphi_{\alpha\beta}^M
:=
\varphi_\alpha^M\circ(\varphi_\beta^M)^{-1}$ is \(\rho_{\alpha\beta}\)-equivariant.

\noindent Once a local \(Q^n\)-action is given, we will always use the maximal weakly regular atlas representing it.

Let \((M^{4n},\mathcal Q)\) be a \(4n\)-dimensional manifold equipped with a local \(Q^n\)-action, and let $\lbrace
(U_\alpha^M,\varphi_\alpha^M)
\rbrace_{\alpha\in A}$
be its maximal weakly regular atlas. We endow each quotient space $\varphi_\alpha^M(U_\alpha^M)/Q^n$
 with the quotient topology induced by the natural projection
\[
\pi_\alpha:
\varphi_\alpha^M(U_\alpha^M)
\longrightarrow
\varphi_\alpha^M(U_\alpha^M)/Q^n.
\]
For every nonempty overlap \(U_{\alpha\beta}^M\), the map \(\varphi_{\alpha\beta}^M\) induces a homeomorphism
\[
\varphi_\beta^M(U_{\alpha\beta}^M)/Q^n
\longrightarrow
\varphi_\alpha^M(U_{\alpha\beta}^M)/Q^n.
\]

Two elements $b_\alpha\in\varphi_\alpha^M(U_\alpha^M)/Q^n$ and $b_\beta\in\varphi_\beta^M(U_\beta^M)/Q^n$
are said to be \emph{equivalent} if
\[
b_\alpha\in\varphi_\alpha^M(U_{\alpha\beta}^M)/Q^n,
\qquad
b_\beta\in\varphi_\beta^M(U_{\alpha\beta}^M)/Q^n,
\]
and the homeomorphism induced by \(\varphi_{\alpha\beta}^M\) sends
\(b_\beta\) to \(b_\alpha\). This defines an equivalence relation \(\sim_{\mathrm{orb}}\) on  the disjoint union
\[
\bigsqcup_{\alpha\in A}
\varphi_\alpha^M(U_\alpha^M)/Q^n.
\]

\begin{definition}
The orbit space of the local \(Q^n\)-action \(\mathcal Q\) is the quotient space
\[
B_M
:=
\left(
\bigsqcup_{\alpha\in A}
\varphi_\alpha^M(U_\alpha^M)/Q^n
\right)\big/\sim_{\mathrm{orb}},
\]
endowed with the quotient topology.
\end{definition}

By construction, the local maps
\[
\pi_\alpha\circ\varphi_\alpha^M:
U_\alpha^M
\longrightarrow
\varphi_\alpha^M(U_\alpha^M)/Q^n
\]
glue to an open continuous map
\[
\pi_M:M\longrightarrow B_M,
\]
called the \emph{orbit map} of the local \(Q^n\)-action.

\begin{prop}[{\cite[Proposition~2.7]{bg25}}]
\label{prop:orbit-space-corners}
The orbit space \(B_M\) carries a natural structure of an \(n\)-dimensional topological manifold with corners.
\end{prop}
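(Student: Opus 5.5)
The plan is to build the manifold-with-corners structure chart by chart from the local models, and then check that the gluing maps preserve the corner structure.

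\textbf{Local charts.} For each chart \((U_\alpha^M,\varphi_\alpha^M)\) of the maximal weakly regular atlas, the set \(V_\alpha:=\varphi_\alpha^M(U_\alpha^M)\) is a \(Q^n\)-invariant open subset of \(\mathbb H^n\). The map \(\mu_{\mathbb H^n}\) induces a homeomorphism \(\mathbb H^n/Q^n\cong\mathbb R^n_{\geq0}\), and it is open, being a quotient map by a compact group. Restricting it identifies \(V_\alpha/Q^n\) with the open subset \(\mu_{\mathbb H^n}(V_\alpha)\subseteq\mathbb R^n_{\geq 0}\). This gives candidate corner charts
\[
\psi_\alpha:\pi_M(U_\alpha^M)\longrightarrow \mu_{\mathbb H^n}(V_\alpha).
\]
Before using them I must check two facts:
\begin{enumerate}
\item the natural map \(V_\alpha/Q^n\to B_M\) is injective, so that the equivalence \(\sim_{\mathrm{orb}}\) does not identify distinct orbits inside a single chart;
\item this map is an open embedding.
\end{enumerate}
Injectivity follows from the description of \(\sim_{\mathrm{orb}}\). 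Equivalent points in two charts correspond to the same point of \(M\) up to the \(\rho_{\alpha\beta}\)-twisted action, and \(\rho_{\alpha\beta}\)-equivariant maps send \(Q^n\)-orbits to \(Q^n\)-orbits. Openness follows because the quotient topology on \(B_M\) is generated by the open sets of the pieces, and each overlap map is a homeomorphism between open pieces.

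\textbf{Transition maps.} On the overlap, the transition map of the corner charts is \(\bar\varphi_{\alpha\beta}:\mu(V_\beta\cap\cdots)\to\mu(V_\alpha\cap\cdots)\), the map induced by \(\varphi_{\alpha\beta}^M\). It is a homeomorphism because \(\varphi^M_{\alpha\beta}\) is one and is equivariant up to \(\rho_{\alpha\beta}\). The key point is that \(\bar\varphi_{\alpha\beta}\) preserves the stratification by the number of vanishing coordinates. Indeed, that number is the orbit type: the isotropy group of \(h\) is a product of \(\mathrm{Sp}(1)\) factors, one for each coordinate with \(h_i=0\). An automorphism of \(Q^n\) carries this isotropy group to a subgroup of the same kind, with the same number of \(\mathrm{Sp}(1)\) factors. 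Here I use that \(\operatorname{Aut}(Q^n)\) consists of permutations of the factors composed with inner automorphisms, since \(\mathrm{Sp}(1)\) is simple and all its automorphisms are inner. So a codimension-\(k\) corner point is sent to a codimension-\(k\) corner point, and the atlas \(\{\psi_\alpha\}\) is an atlas of a topological manifold with corners of dimension \(n\).

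\textbf{Global topology.} It remains to show that \(B_M\) is Hausdorff and second countable. Second countability is inherited from \(M\), since \(\pi_M\) is open and surjective and \(M\) is second countable. For Hausdorffness, suppose \(b\neq b'\) in \(B_M\). Their preimages are distinct orbits, each of which lies inside some chart where it is a compact orbit. I would separate them by invariant open sets: either both orbits lie in one chart, where the quotient \(\mathbb R^n_{\geq0}\) is Hausdorff, or I use disjoint invariant tubular neighbourhoods obtained from the Hausdorffness of \(M\) and compactness of the orbits, then push these forward by \(\pi_M\).

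\textbf{Main obstacle.} I expect the main obstacle to be exactly this Hausdorff and well-definedness step. The equivalence relation is defined chartwise, not by a global group action, so one must check carefully that \(\pi_M\) is well defined and that local orbits from different charts either coincide or are disjoint. I would settle this first, using the maximality of the atlas and the \(\rho_{\alpha\beta}\)-equivariance to show that the local orbits patch into a partition of \(M\) into compact subsets.
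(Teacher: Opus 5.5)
Your proposal is correct and follows essentially the same route as the cited proof in \cite{bg25}. The paper omits that proof but records its outcome in Remark~\ref{rem:orbit-atlas}: the charts on \(B_M\) are induced by \(\mu_{\mathbb H^n}\circ\varphi^M_\alpha\), and the transition maps are induced by the \(\rho_{\alpha\beta}\)-equivariant overlap maps, so they preserve the stratification by vanishing coordinates. One small sharpening: the chart-independence of local orbits, which you flag as the main obstacle, comes from the \(Q^n\)-invariance of the overlap images \(\varphi^M_\beta(U^M_{\alpha\beta})\), implicit in the equivariance requirement, rather than from maximality of the atlas.
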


\begin{remark}\label{rem:orbit-atlas}
The atlas $\lbrace
(U_\alpha^B,\varphi_\alpha^B)
\rbrace_{\alpha\in A}$
satisfies $U_\alpha^M=\pi_M^{-1}(U_\alpha^B)$
and \(\varphi_\alpha^M(U_\alpha^M) = \mu_{\mathbb H^n}^{-1} \bigl(\varphi_\alpha^B(U_\alpha^B)\bigr).\) Moreover, the following diagram commutes:
\[
\begin{tikzcd}
\pi_M^{-1}(U_\alpha^B)
\arrow[r, "\varphi_\alpha^M"]
\arrow[d, "\pi_M"']
&
\mu_{\mathbb H^n}^{-1}
\bigl(\varphi_\alpha^B(U_\alpha^B)\bigr)
\arrow[d, "\mu_{\mathbb H^n}"]
\\
U_\alpha^B
\arrow[r, "\varphi_\alpha^B"']
&
\varphi_\alpha^B(U_\alpha^B)
\end{tikzcd}
\]
\end{remark}

\noindent We now describe the obstruction to the existence of a globally defined \(Q^n\)-action. Let $\mathcal U
=\{U_\alpha^B\}_{\alpha\in A}$
be the open covering of \(B_M\) induced by the maximal weakly regular atlas. The automorphisms $\rho_{\alpha\beta}\in\operatorname{Aut}(Q^n)$
associated with the overlap maps form a Čech \(1\)-cocycle on \(\mathcal U\) with values in \(\operatorname{Aut}(Q^n)\). We denote its cohomology class by $[\rho_{\alpha\beta}]
\in
\check H^1
\bigl(B_M;\operatorname{Aut}(Q^n)\bigr)$.

The following class of examples is the quaternionic analogue of locally toric Lagrangian fibrations in the symplectic torus setting (see details in \cite[\S 5]{bg25}).

\begin{example}[Generalized Lagrangian-type quaternionic toric fibrations]
\label{ex:generalized-lagrangian-quaternionic-fibration}
Let \((M^{4n},\psi)\) be a smooth \(4n\)-dimensional manifold equipped with a tetraplectic structure, and let \(B\) be an \(n\)-dimensional manifold with corners. A continuous map \(\pi:(M^{4n},\psi)\longrightarrow B\) is called a \emph{locally generalized Lagrangian-type quaternionic toric fibration} if, for every point of \(B\), there exists a coordinate neighborhood \((U_\alpha^B,\varphi_\alpha^B)\) modeled on \(\mathbb R_{\geq 0}^n\), and a tetraplectomorphism \(\varphi_\alpha^M:\bigl(\pi^{-1}(U_\alpha^B),\psi\bigr)\longrightarrow\bigl(\mu_{\mathbb H^n}^{-1}(\varphi_\alpha^B(U_\alpha^B)),\psi_{\mathbb H^n}\bigr)\) such that \( \mu_{\mathbb H^n}\circ\varphi_\alpha^M = \varphi_\alpha^B\circ\pi\). Here \(\mu_{\mathbb H^n}:\mathbb H^n\longrightarrow\mathbb R_{\geq 0}^n\) is the orbit map of the regular \(Q^n\)-action on \(\mathbb H^n\).

Such a fibration carries a natural local \(Q^n\)-action. Indeed, the local models are given by the regular \(Q^n\)-action on \(\mathbb H^n\), and the transition maps between local trivializations are equivariant up to automorphisms of \(Q^n\). Thus locally generalized Lagrangian-type quaternionic toric fibrations provide geometric examples of the local quaternionic torus actions considered in this paper.

In particular, when a quaternionic toric manifold in the sense of Gentili--Gori--Sarfatti \cite{ggs} is equipped with a tri-moment map \(\mu:(M^{4n},\psi)\longrightarrow \mathbb R^n,\) and the local normal form is modeled on the regular \(Q^n\)-action on \(\mathbb H^n\), the tri-moment map gives an example of such a locally generalized Lagrangian-type quaternionic toric fibration.
\end{example}

\begin{prop}[{\cite[Proposition~2.12]{bg25}}]
\label{prop:globalization-obstruction}
A local \(Q^n\)-action \(\mathcal Q\) on \(M\) is induced by a locally regular \(Q^n\)-action if and only if the cocycle \({\rho_{\alpha\beta}}\) is cohomologous to the trivial Čech \(1\)-cocycle in \(\check H^1 \bigl(B_M;\operatorname{Aut}(Q^n)\bigr)\).
\end{prop}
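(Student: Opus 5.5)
The plan is to prove both implications by comparing the chosen weakly regular atlas with a reduced atlas that has identity transition automorphisms. Throughout I work with the open cover \(\mathcal U=\{U_\alpha^B\}\) of \(B_M\), and with the standard Čech conventions for cochains valued in the discrete group \(\operatorname{Aut}(Q^n)\). Since \(Q^n=(\operatorname{Sp}(1))^n\) is compact, \(\operatorname{Aut}(Q^n)\) is discrete: every automorphism is a composite of inner automorphisms of the factors and a permutation of the factors. So the cochains are locally constant, and cohomology classes may be compared after passing to refinements.

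\emph{Forward direction.} Suppose \(\mathcal Q\) is induced by a locally regular \(Q^n\)-action \(\Phi:Q^n\times M\to M\). By the definition of local regularity, and after refining \(\mathcal U\) if necessary, each \(U_\alpha^M=\pi_M^{-1}(U_\alpha^B)\) is \(\Phi\)-invariant. On it there is a chart \(\psi_\alpha\) and an automorphism \(\sigma_\alpha\in\operatorname{Aut}(Q^n)\) such that \(\psi_\alpha(\Phi(u,x))=\sigma_\alpha(u)\cdot\psi_\alpha(x)\). Since the atlas is maximal, \(\psi_\alpha\) belongs to it. The transition maps \(\psi_\alpha\psi_\beta^{-1}\) are then \(\sigma_\alpha\sigma_\beta^{-1}\)-equivariant. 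On a connected overlap the equivariance automorphism is uniquely determined, because the action is effective on the principal orbits. Hence the cocycle attached to this subatlas is \(\rho_{\alpha\beta}=\sigma_\alpha\sigma_\beta^{-1}\), a Čech coboundary. For a general chart of the atlas I would use the same uniqueness to compare it with \(\psi_\alpha\). This shows that the original cocycle is cohomologous to the one coming from the \(\psi_\alpha\), so its class is trivial.

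\emph{Converse.} Suppose \(\rho_{\alpha\beta}=\sigma_\alpha^{-1}\sigma_\beta\) for some locally constant \(\sigma_\alpha:U_\alpha^B\to\operatorname{Aut}(Q^n)\), after refinement. Replace each chart \(\varphi_\alpha^M\) by \(\theta_{\sigma_\alpha}\circ\varphi_\alpha^M\), where \(\theta_\sigma\) is the self-map of \(\mathbb H^n\) realizing the automorphism \(\sigma\). Concretely, \(\theta_\sigma\) permutes the coordinates and conjugates each coordinate by a unit quaternion. The key point is that \(\theta_\sigma\) is \(\sigma\)-equivariant and preserves \(Q^n\)-invariant open sets, so the modified charts still form a weakly regular atlas. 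Their transition maps are equivariant with identity automorphism. I then define \(\Phi(u,x):=\varphi_\alpha^{-1}(u\cdot\varphi_\alpha(x))\) for \(x\in U_\alpha^M\). This is independent of \(\alpha\) because the transitions are now \(Q^n\)-equivariant, and it is continuous and a group action because it is so chartwise. It is locally regular by construction, and it induces \(\mathcal Q\).

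The main obstacle I expect is the rigidity step: showing that the equivariance automorphism of a transition map is unique on each connected overlap. This is what makes \(\rho_{\alpha\beta}\) well defined and locally constant, and what allows a coboundary to be turned back into chart changes. I would prove it on a principal orbit, where \(Q^n\) acts freely: if an equivariance holds for two automorphisms \(\rho,\rho'\), then \(\rho(u)\cdot y=\rho'(u)\cdot y\) for every \(u\), so freeness gives \(\rho=\rho'\). Since the principal stratum is dense in the overlap and \(\operatorname{Aut}(Q^n)\) is discrete, the automorphism is constant on each connected component.
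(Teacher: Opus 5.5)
The paper does not reprove this statement; it defers to \cite{bg25}. Your two-step strategy is the standard argument: charts adapted to a global action give a coboundary, and conversely a coboundary lets you renormalize the charts by $\theta_{\sigma_\alpha}$ until all transitions are $Q^n$-equivariant, and then glue. But one claim your proof rests on is false: $\operatorname{Aut}(Q^n)$ is not discrete. Each factor contributes $\operatorname{Inn}(\operatorname{Sp}(1))\cong\operatorname{Sp}(1)/\{\pm1\}\cong SO(3)$, so $\operatorname{Aut}(Q^n)\cong SO(3)^n\rtimes S_n$ is a compact Lie group of dimension $3n$. The implication ``compact, hence discrete automorphism group'' holds for tori ($\operatorname{Aut}(T^n)=GL_n(\mathbb Z)$) and fails precisely in the quaternionic case. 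Your rigidity step inherits the error: pointwise uniqueness on a dense principal stratum does not give constancy on connected components. Take a nonconstant continuous $g:\mathbb R^n_{\geq0}\to Q^n$ and let $c_g$ denote coordinatewise conjugation. Then the homeomorphism $f(y)=\bigl(g_i(\mu_{\mathbb H^n}(y))\,y_i\,g_i(\mu_{\mathbb H^n}(y))^{-1}\bigr)_i$ satisfies $f(u\cdot y)=c_{g(\mu_{\mathbb H^n}(y))}(u)\cdot f(y)$. The automorphism is uniquely determined at each point, yet it varies continuously.

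This is not cosmetic. The proposition is true only when $\check H^1$ is computed with $\operatorname{Aut}(Q^n)$ regarded as a discrete group, which is what the paper tacitly does: one automorphism per overlap, and identification with representations of $\pi_1(B_M)$. With continuous cochains the converse fails. Let $Q$ act on $\mathbb R\times S^3$ by $u\cdot(t,h)=(t,uh)$, and divide by $(t,h)\mapsto(t+1,ghg^{-1})$ with $g\neq\pm1$. The monodromy $c_g$ is trivial in continuous cohomology because $SO(3)$ is connected. Yet any global action inducing this local action would lift to $(t,h)\mapsto(t,\tau(u)h)$ for a fixed $\tau\in\operatorname{Aut}(Q)$ commuting with the deck map, which forces $g$ to be central. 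Correspondingly, renormalizing by a nonconstant $\sigma_\alpha$ produces charts outside the maximal atlas of $\mathcal Q$.

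The repair is easy: delete the discreteness claim. Constancy of $\rho_{\alpha\beta}$ on each overlap is part of the definition of a weakly regular atlas. Its uniqueness needs only one free orbit in the overlap, since every nonempty $Q^n$-invariant open subset of $\mathbb H^n$ meets $\{h_i\neq0\ \forall i\}$; so neither connectedness nor discreteness enters. When you compare an arbitrary chart with the $\psi_\alpha$, the same uniqueness on triple overlaps makes the comparison cochain locally constant. Also write ``free'' rather than ``effective'' in the uniqueness argument. Finally, refine to connected $U_\alpha^B$ before renormalizing, since applying different $\theta_\sigma$ on different components of one chart domain can destroy injectivity.
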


\begin{remark}
If \(B_M\) is connected, the class
\[
[\rho_{\alpha\beta}]
\in
\check H^1
\bigl(B_M;\operatorname{Aut}(Q^n)\bigr)
\]
may equivalently be described by a monodromy representation
\[
\rho:\pi_1(B_M)\longrightarrow\operatorname{Aut}(Q^n),
\]
up to conjugation. In particular, the local action is induced by a globally defined locally regular \(Q^n\)-action precisely when the corresponding monodromy is trivial up to the usual equivalence. More precisely, there is a one-to-one correspondence between \(\check H^1 \bigl(B_M;\operatorname{Aut}(Q^n)\bigr) \) and the moduli space of representations of \(\pi_1(B_M)\) to \(Aut(Q^n)\).
\end{remark}

\subsection{Untwisting the action}
Let \((M,\mathcal Q)\) be a \(4n\)-dimensional manifold equipped with a local \(Q^n\)-action \(\mathcal Q\). Let $\lbrace
(U_\alpha^M,\varphi_\alpha^M)
\rbrace_{\alpha\in A}$
be a weakly regular atlas belonging to \(\mathcal Q\), and let $\lbrace
(U_\alpha^B,\varphi_\alpha^B)
\rbrace_{\alpha\in A}$
be the induced atlas of the orbit space \(B_M\), satisfying the properties described in Remark~\ref{rem:orbit-atlas}. Let
\[
p:\widetilde B_M\longrightarrow B_M
\]
be the universal covering of \(B_M\). We regard \(p\) as a principal \(\pi_1(B_M)\)-bundle, with \(\pi_1(B_M)\) acting on the right on \(\widetilde B_M\). Consider the fiber product
\[
\widetilde M
:=
p^*M
=
\lbrace
(\widetilde b,x)\in\widetilde B_M\times M:
p(\widetilde b)=\pi_M(x)
\rbrace,
\]
where $\pi_M:M\longrightarrow B_M$
is the orbit map. The projection
\[
\widetilde\pi_M:\widetilde M\longrightarrow\widetilde B_M,
\qquad
(\widetilde b,x)\longmapsto\widetilde b,
\]
is the pullback of \(\pi_M\). The local \(Q^n\)-action on \(M\) pulls back naturally to a local \(Q^n\)-action on \(\widetilde M\), whose orbit space is \(\widetilde B_M\). Since \(\widetilde B_M\) is simply connected, the corresponding Čech class in
 $\check H^1
\bigl(\widetilde B_M;\operatorname{Aut}(Q^n)\bigr)$
is trivial. Hence, by Proposition~\ref{prop:globalization-obstruction}, the pulled-back local action is induced by a globally defined locally regular \(Q^n\)-action.

Moreover, \(\widetilde M\) carries the natural action of \(\pi_1(B_M)\) induced by the deck transformations:
\[
a\cdot(\widetilde b,x)
=
(\widetilde b\cdot a^{-1},x),
\qquad
a\in\pi_1(B_M).
\]
The global \(Q^n\)-action and the deck-transformation action combine to give an action of a semidirect product. We now describe this action explicitly.

By replacing the covering \(\{U_\alpha^B\}_{\alpha\in A}\) by a refinement if necessary, we may assume that for every \(\alpha\) there is a local trivialization
\[
\widetilde\varphi_\alpha^B:
p^{-1}(U_\alpha^B)
\longrightarrow
U_\alpha^B\times\pi_1(B_M),
\]
of the universal covering. On every nonempty overlap $U_{\alpha\beta}^B
=
U_\alpha^B\cap U_\beta^B$,
let $a_{\alpha\beta}:U_{\alpha\beta}^B
\longrightarrow\pi_1(B_M)$
be the corresponding transition function. Since \(\pi_1(B_M)\) is discrete, the functions \(a_{\alpha\beta}\) are locally constant. We choose the trivializations so that, whenever
\[
\widetilde\varphi_\alpha^B(\widetilde b)
=
\bigl(p(\widetilde b),a_\alpha\bigr),
\qquad
\widetilde\varphi_\beta^B(\widetilde b)
=\bigl(p(\widetilde b),a_\beta\bigr),
\]
one has \(a_\alpha=a_{\alpha\beta}a_\beta\).

As explained in \cite{bg25}, recall that the automorphisms $\rho_{\alpha\beta}\in\operatorname{Aut}(Q^n)$
associated with the weakly regular atlas define a class $[{\rho_{\alpha\beta}}]
\in
\check H^1
\bigl(B_M;\operatorname{Aut}(Q^n)\bigr)$.
Choose a representative
\[
\rho:\pi_1(B_M)\longrightarrow\operatorname{Aut}(Q^n),
\]
of the corresponding conjugacy class of representations. Then, for every
\(\alpha\), there exists an automorphism $\rho_\alpha\in\operatorname{Aut}(Q^n)$
such that, on every nonempty overlap, $\rho_{\alpha\beta}
=
\rho_\alpha\circ\rho(a_{\alpha\beta})
\circ\rho_\beta^{-1}$. Let \[\mathcal G_M:=Q^n\rtimes_\rho\pi_1(B_M),\]
be the semidirect product determined by \(\rho\). As a set, it is the Cartesian product $Q^n\times\pi_1(B_M)$,
with multiplication
\[
(u_1,a_1)(u_2,a_2)
=\bigl(u_1\rho(a_1)(u_2),a_1a_2\bigr).
\]

Let $(u,a)\in \mathcal G_M$
and $(\widetilde b,x)\in\widetilde M$.
Suppose that $\widetilde b\in p^{-1}(U_\alpha^B)$
and that $
\widetilde\varphi_\alpha^B(\widetilde b)
=\bigl(p(\widetilde b),a_\alpha\bigr)$.
Consider the map 
\[\cdot_\rho:\;\;\mathcal G_M\times \widetilde M\longrightarrow \widetilde M\]
\begin{equation}
\label{eq:untwisted-action}
(u,a)\cdot_\rho(\widetilde b,x)
:=
\left(
\widetilde b\cdot a^{-1},
(\varphi_\alpha^M)^{-1}
\left(
\bigl(
\rho_\alpha\circ\rho(a_\alpha a^{-1})
\bigr)(u)
\cdot
\varphi_\alpha^M(x)
\right)
\right).
\end{equation}

\begin{lemma}
\label{lem:untwisted-action}
The map~\eqref{eq:untwisted-action} is independent of the choice of the index \(\alpha\) and defines an action of $\mathcal G_M$
on \(\widetilde M\).
\end{lemma}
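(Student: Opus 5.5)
The proof splits into three parts: well-definedness in the chart index, the action axioms for the $\rho$-twisted multiplication, and continuity.

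\emph{Independence of the index.} Suppose $\widetilde b\in p^{-1}(U_{\alpha\beta}^B)$ with $\widetilde\varphi_\alpha^B(\widetilde b)=(p(\widetilde b),a_\alpha)$ and $\widetilde\varphi_\beta^B(\widetilde b)=(p(\widetilde b),a_\beta)$, so that $a_\alpha=a_{\alpha\beta}a_\beta$. We must show
\[
(\varphi_\alpha^M)^{-1}\Bigl(\bigl(\rho_\alpha\rho(a_\alpha a^{-1})\bigr)(u)\cdot\varphi_\alpha^M(x)\Bigr)
=(\varphi_\beta^M)^{-1}\Bigl(\bigl(\rho_\beta\rho(a_\beta a^{-1})\bigr)(u)\cdot\varphi_\beta^M(x)\Bigr).
\]
Apply $\varphi_\alpha^M$ to the right-hand side and use the $\rho_{\alpha\beta}$-equivariance of $\varphi_{\alpha\beta}^M=\varphi_\alpha^M(\varphi_\beta^M)^{-1}$. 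This turns it into
\[
\bigl(\rho_{\alpha\beta}\rho_\beta\rho(a_\beta a^{-1})\bigr)(u)\cdot\varphi_\alpha^M(x).
\]
Substituting $\rho_{\alpha\beta}=\rho_\alpha\rho(a_{\alpha\beta})\rho_\beta^{-1}$ gives
\[
\rho_\alpha\rho(a_{\alpha\beta})\rho(a_\beta a^{-1})=\rho_\alpha\rho(a_\alpha a^{-1}),
\]
because $\rho$ is a homomorphism and $a_{\alpha\beta}a_\beta=a_\alpha$. The two sides therefore agree. One caveat: the orbit $Q^n\cdot x$ must lie in $U_{\alpha\beta}^M$ for this to make sense, and it does, since $U^M_{\alpha\beta}=\pi_M^{-1}(U^B_{\alpha\beta})$ by Remark~\ref{rem:orbit-atlas}. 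We also check that the output lies in $\widetilde M$. The second component stays in the same $Q^n$-orbit chart, so its $\pi_M$-value is still $\pi_M(x)=p(\widetilde b)=p(\widetilde b a^{-1})$.

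\emph{Action axioms.} The identity $(1,e)$ clearly acts trivially. For composition, let $(u_2,a_2)$ act first, sending $\widetilde b$ to $\widetilde b a_2^{-1}$. In the same chart $\alpha$ (note $p(\widetilde b a_2^{-1})=p(\widetilde b)$), the new label is determined by the trivialization being $\pi_1$-equivariant for the right deck action. The convention $\widetilde\varphi_\alpha^B(\widetilde b\,c)=(p(\widetilde b),a_\alpha c)$ gives label $a_\alpha a_2^{-1}$. Acting next by $(u_1,a_1)$ applies
\[
\rho_\alpha\rho(a_\alpha a_2^{-1}a_1^{-1})(u_1)\quad\text{after}\quad \rho_\alpha\rho(a_\alpha a_2^{-1})(u_2),
\]
both multiplying coordinatewise in the chart $\varphi_\alpha^M$. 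Since $Q^n$ acts on $\mathbb H^n$ by a left action, the composite is multiplication by
\[
\rho_\alpha\rho(a_\alpha a_2^{-1}a_1^{-1})\bigl(u_1\,\rho(a_1)(u_2)\bigr),
\]
using $\rho(a_\alpha a_2^{-1}a_1^{-1})\rho(a_1)=\rho(a_\alpha a_2^{-1})$. This is exactly the action of the product $(u_1\rho(a_1)(u_2),a_1a_2)$, since $(a_1a_2)^{-1}=a_2^{-1}a_1^{-1}$.

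\emph{Main obstacle.} The step needing most care is the labelling convention of the trivializations $\widetilde\varphi_\alpha^B$ relative to the right deck action. It must be fixed so that the cocycle relation $a_\alpha=a_{\alpha\beta}a_\beta$ (left multiplication) and the deck shift (right multiplication) are compatible. With a left-versus-right mismatch one would get $\rho(a^{-1}a_\alpha)$ and the composition law would fail. I would state explicitly that the trivializations are right $\pi_1(B_M)$-equivariant, which is forced by regarding $p$ as a principal bundle, and then check the two computations above.

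\emph{Continuity.} This is local in $\alpha$, since labels are locally constant and each chart formula is continuous.
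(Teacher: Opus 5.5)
Your proposal is correct and follows the paper's proof essentially step by step. Chart-independence comes from the $\rho_{\alpha\beta}$-equivariance of $\varphi^M_{\alpha\beta}$ combined with $\rho_{\alpha\beta}=\rho_\alpha\circ\rho(a_{\alpha\beta})\circ\rho_\beta^{-1}$ and $a_\alpha=a_{\alpha\beta}a_\beta$, and the action law from the label shift $a_\alpha\mapsto a_\alpha a_2^{-1}$ in a fixed chart. Your additional checks are details the paper leaves implicit: the orbit lying in the overlap, the output lying in $\widetilde M$, continuity, and the explicit right-equivariance convention for $\widetilde\varphi^B_\alpha$.
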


\begin{proof}
Suppose that \(\widetilde b\in p^{-1}(U_\alpha^B)\cap p^{-1}(U_\beta^B)\). Write \(\widetilde\varphi_\alpha^B(\widetilde b)
=\bigl(p(\widetilde b),a_\alpha\bigr),\) and \(\widetilde\varphi_\beta^B(\widetilde b) = \bigl(p(\widetilde b),a_\beta\bigr)\). Then $a_\alpha=a_{\alpha\beta}a_\beta$
and \(\rho_{\alpha\beta} = \rho_\alpha\circ\rho(a_{\alpha\beta}) \circ\rho_\beta^{-1}\). Since the overlap map $\varphi_{\alpha\beta}^M
=
\varphi_\alpha^M\circ(\varphi_\beta^M)^{-1}$
is \(\rho_{\alpha\beta}\)-equivariant, we have
\[
(\varphi_\alpha^M)^{-1}
\left(
v\cdot\varphi_\alpha^M(x)
\right)
=
(\varphi_\beta^M)^{-1}
\left(
\rho_{\alpha\beta}^{-1}(v)
\cdot\varphi_\beta^M(x)
\right),
\]
for every \(v\in Q^n\). Taking $v=
\bigl(
\rho_\alpha\circ\rho(a_\alpha a^{-1})
\bigr)(u)$,
we obtain
\begin{align*}
\rho_{\alpha\beta}^{-1}
\circ\rho_\alpha\circ\rho(a_\alpha a^{-1})
=
\rho_\beta\circ\rho(a_{\alpha\beta})^{-1}
\circ\rho(a_\alpha a^{-1})\
=
\rho_\beta\circ\rho(a_\beta a^{-1}).
\end{align*}
Therefore
\begin{align*}
&(\varphi_\alpha^M)^{-1}
\left(
\bigl(
\rho_\alpha\circ\rho(a_\alpha a^{-1})
\bigr)(u)
\cdot\varphi_\alpha^M(x)
\right)
=
(\varphi_\beta^M)^{-1}
\left(
\bigl(
\rho_\beta\circ\rho(a_\beta a^{-1})
\bigr)(u)
\cdot\varphi_\beta^M(x)
\right).
\end{align*}
Thus~\eqref{eq:untwisted-action} is independent of the choice of the local chart. It remains to verify the action law. Let $(u_1,a_1),(u_2,a_2)
\in
\mathcal G_M$.
If
\[
\widetilde\varphi_\alpha^B(\widetilde b)
=
\bigl(p(\widetilde b),a_\alpha\bigr),
\]
then
\[
\widetilde\varphi_\alpha^B
(\widetilde b\cdot a_2^{-1})
=\bigl(p(\widetilde b),a_\alpha a_2^{-1}\bigr).
\]
Using the fact that \(\rho(a)\) and \(\rho_\alpha\) are automorphisms of \(Q^n\), the \(Q^n\)-coordinate obtained by applying first \((u_2,a_2)\) and then \((u_1,a_1)\) is
\begin{align*}
&
\bigl(
\rho_\alpha\circ
\rho(a_\alpha a_2^{-1}a_1^{-1})
\bigr)(u_1)
\
\cdot
\bigl(
\rho_\alpha\circ
\rho(a_\alpha a_2^{-1})
\bigr)(u_2)
\
=
\bigl(
\rho_\alpha\circ
\rho(a_\alpha(a_1a_2)^{-1})
\bigr)
\bigl(u_1\rho(a_1)(u_2)\bigr).
\end{align*}
Consequently,
\[
(u_1,a_1)\cdot_\rho
\bigl((u_2,a_2)\cdot_\rho(\widetilde b,x)\bigr) =\bigl((u_1,a_1)(u_2,a_2)\bigr)
\cdot_\rho(\widetilde b,x).
\]
The identity element acts trivially, and hence \eqref{eq:untwisted-action} defines the required action.
\end{proof}

\begin{remark}
The restriction of the action in Lemma~\ref{lem:untwisted-action} to the normal subgroup $Q^n\cong Q^n\times{1}$
is the globally defined locally regular \(Q^n\)-action obtained by untwisting the original local action. Its restriction to
\[
\pi_1(B_M)\cong{1}\times\pi_1(B_M)
\]
is the natural deck-transformation action, twisted by the monodromy representation \(\rho\).
\end{remark}

\begin{definition}\label{def:untwisted-action}
We shall refer to the action of $\mathcal G_M=Q^n\rtimes_\rho\pi_1(B_M)$ on \(\widetilde M\) as the \emph{untwisted action} associated with the local quaternionic torus action \(\mathcal Q\).
\end{definition}

\begin{example}[A local quaternionic torus action with nontrivial monodromy]
\label{ex:local-quaternionic-nonglobal}
Let \(\sigma:Q^2\longrightarrow Q^2\) be the automorphism which interchanges the two factors \(\sigma(q_1,q_2)=(q_2,q_1)\).

Consider the manifold \(\widetilde M=\mathbb R^2\times Q^2.\) We let \(Q^2\) act on \(\widetilde M\) by left multiplication on the \(Q^2\)-factor:
\[
(q_1,q_2)\cdot(t_1,t_2,h_1,h_2)
=
(t_1,t_2,q_1h_1,q_2h_2).
\]
This is the free part of the regular \(Q^2\)-action and has orbit space \(\widetilde B=\mathbb R^2\).

Let \(\mathbb Z^2\) act on \(\widetilde M\) by deck transformations as
follows:
\[
(1,0)\cdot(t_1,t_2,h_1,h_2)
=
(t_1+1,t_2,h_2,h_1),
\]
and
\[
(0,1)\cdot(t_1,t_2,h_1,h_2)
=
(t_1,t_2+1,h_1,h_2).
\]
The quotient
\[
M:=\widetilde M/\mathbb Z^2
\]
is an \(8\)-dimensional manifold. Its orbit space is $B_M\cong \mathbb T^2$,
and the induced local \(Q^2\)-action has monodromy representation
\[
\rho:\pi_1(B_M)\cong\mathbb Z^2
\longrightarrow
\operatorname{Aut}(Q^2)
\]
given by
$\rho(1,0)=\sigma,
\;
\rho(0,1)=\operatorname{id}_{Q^2}$. Since \(\sigma\neq\operatorname{id}_{Q^2}\), the monodromy representation is nontrivial. Hence the corresponding \v{C}ech class
\[
[\rho_{\alpha\beta}]
\in
\check H^1\bigl(B_M;\operatorname{Aut}(Q^2)\bigr)
\]
is nontrivial. By Proposition~\ref{prop:globalization-obstruction}, this local \(Q^2\)-action is not induced by a globally defined locally regular \(Q^2\)-action.

On the other hand, after pulling back to the universal covering
\[
\mathbb R^2\longrightarrow\mathbb T^2,
\]
the monodromy disappears and the local action becomes the globally defined
free \(Q^2\)-action on
\[
\widetilde M=\mathbb R^2\times Q^2.
\]
Thus this example illustrates the untwisting construction of Section~\ref{sec:untwisting}.
\end{example}

\subsection{Lifts to principal quaternionic torus bundles}
\label{sec:local-liftings}

In this subsection, we study the lifting problem for local quaternionic torus actions to principal quaternionic torus bundles. Unless otherwise stated, all spaces, maps, and local actions are assumed to be continuous.

\noindent Keeping our previous notation, let \((M,\mathcal Q)\) be a \(4n\)-dimensional manifold equipped with a local \(Q^n\)-action, and let $p:\widetilde B_M\longrightarrow B_M$
be the universal covering of its orbit space. Fix a representative monodromy representation $\rho:\pi_1(B_M)\longrightarrow \operatorname{Aut}(Q^n)$
of the conjugacy class of representations corresponding to the Čech class $[\{\rho_{\alpha\beta}\}]
\in
\check H^1\bigl(B_M;\operatorname{Aut}(Q^n)\bigr)$.
As in Section~\ref{sec:untwisting}, we put
\[
\widetilde M
=
p^*M
=
\left\{
(\widetilde b,x)\in \widetilde B_M\times M:
p(\widetilde b)=\pi_M(x)
\right\}.
\]
The semidirect product $\mathcal G_M=
Q^n\rtimes_\rho\pi_1(B_M)$
acts on \(\widetilde M\) by the untwisted action constructed in Lemma~\ref{lem:untwisted-action}. We denote the restrictions of this action by
\[
\phi_Q:Q^n\longrightarrow\operatorname{Homeo}(\widetilde M)
\]
and
\[
\phi_\pi:\pi_1(B_M)\longrightarrow\operatorname{Homeo}(\widetilde M).
\]
Note that the two restrictions are related through the following relation; $\forall a\in\pi_1(B_M),\;
u\in Q^n,$
\begin{equation}
\label{eq:base-semidir-compatibility}
\phi_\pi(a)\circ\phi_Q(u)\circ\phi_\pi(a)^{-1}
=
\phi_Q\bigl(\rho(a)(u)\bigr).
\end{equation}

Let $Q^k\longrightarrow P\overset{\pi_P}{\longrightarrow}M$
be a principal quaternionic torus bundle. Its pullback to \(\widetilde M\) is the principal \(Q^k\)-bundle $Q^k\longrightarrow
\widetilde P
\overset{\pi_{\widetilde P}}{\longrightarrow}
\widetilde M$,
where
\[
\widetilde P
=
\left\{
(\widetilde b,p_0)\in\widetilde B_M\times P:
p(\widetilde b)=\pi_M\bigl(\pi_P(p_0)\bigr)
\right\}.
\]

The natural deck-transformation action on \(\widetilde M\) admits a canonical lift
\[
\widetilde\phi_\pi:
\pi_1(B_M)
\longrightarrow
\operatorname{Isom}(\widetilde P)
\]
defined by
\[
\widetilde\phi_\pi(a)(\widetilde b,p_0)
=
(\widetilde b\cdot a^{-1},p_0).
\]
This is a principal \(Q^k\)-bundle automorphism covering
\(\phi_\pi(a)\).

By Section~\ref{sec:untwisting}, the pulled-back local action on \(\widetilde M\) is induced by a globally defined \(Q^n\)-action
\[
\phi_Q:Q^n\longrightarrow\operatorname{Homeo}(\widetilde M).
\]
The deck-transformation action of \(\pi_1(B_M)\) on \(\widetilde M\) has a canonical lift
\[
\widetilde\phi_\pi:\pi_1(B_M)\longrightarrow\operatorname{Isom}(\widetilde P).
\]

\begin{definition}
\label{def:local-Q-lifting}
A \emph{lift to $P$ of the local \(Q^n\)-action \(\mathcal Q\) on M} is a lift of the globalized \(Q^n\)-action to \(\widetilde P\), namely a continuous
homomorphism
\[
\widetilde\phi_Q:
Q^n\longrightarrow
\operatorname{Isom}(\widetilde P),
\]
covering $\phi_Q:Q^n\longrightarrow\operatorname{Homeo}(\widetilde M)$,
such that it is compatible with the lifted deck transformations:
\begin{equation}
\label{eq:lifted-semidir-compatibility}
\widetilde\phi_\pi(a)\circ
\widetilde\phi_Q(u)\circ
\widetilde\phi_\pi(a)^{-1}
=
\widetilde\phi_Q\bigl(\rho(a)(u)\bigr),\;\;\forall a\in\pi_1(B_M),\;u\in Q^n.
\end{equation}
\end{definition}

\noindent Equivalently, condition~\eqref{eq:lifted-semidir-compatibility} says that \(\widetilde\phi_Q\) and \(\widetilde\phi_\pi\) combine to define an action of 
$\mathcal G_M$
on \(\widetilde P\) by principal \(Q^k\)-bundle automorphisms.

\noindent We next give a local description of this notion. Let again $\left\{
(U_\alpha^M,\varphi_\alpha^M)
\right\}_{\alpha\in A}$
be a weakly regular atlas representing \(\mathcal Q\), and let $\left\{
(U_\alpha^B,\varphi_\alpha^B)
\right\}_{\alpha\in A}$
be the induced atlas of \(B_M\). We assume, after refining the covering if necessary, that for every \(\alpha\) there exists a local trivialization
\[
\widetilde\varphi_\alpha^B:
p^{-1}(U_\alpha^B)
\longrightarrow
U_\alpha^B\times\pi_1(B_M),
\]
of the universal covering  $p:\widetilde B_M\longrightarrow B_M.$

\begin{prop}
\label{prop:local-lifting-chartwise}
The principal \(Q^k\)-bundle $Q^k\longrightarrow P\longrightarrow M$ admits a lift of the local \(Q^n\)-action \(\mathcal Q\) if and only if there exists a family
\[
\left\{
(P_\alpha,\widetilde\phi_\alpha,\varphi_\alpha^P)
\right\}_{\alpha\in A}
\]
with the following properties:
\begin{enumerate}
\item[(1)]
\[
Q^k\longrightarrow
P_\alpha
\overset{\pi_\alpha}{\longrightarrow}
\varphi_\alpha^M(U_\alpha^M)
\]
is a locally trivial principal \(Q^k\)-bundle. Thus, for every point \(y\in \varphi_\alpha^M(U_\alpha^M),\) there exists an open neighborhood \(V\subseteq \varphi_\alpha^M(U_\alpha^M)\) and a principal \(Q^k\)-bundle trivialization \(P_\alpha|_V\cong V\times Q^k\).

\item[(2)]
\[
\widetilde\phi_\alpha:
Q^n\longrightarrow
\operatorname{Isom}(P_\alpha)
\]
is a lift of the restriction of the regular \(Q^n\)-action to
\(\varphi_\alpha^M(U_\alpha^M)\);

\item[(3)]
\[
\varphi_\alpha^P:
P|_{U_\alpha^M}
\longrightarrow
P_\alpha
\]
is a principal \(Q^k\)-bundle isomorphism covering
\(\varphi_\alpha^M\);

\item[(4)] On every nonempty overlap \(U_{\alpha\beta}^M\), the induced bundle isomorphism \(\varphi_{\alpha\beta}^P:=\varphi_\alpha^P\circ(\varphi_\beta^P)^{-1}\) between the restricted principal \(Q^k\)-bundles over the corresponding overlap charts is \(\rho_{\alpha\beta}\)-equivariant with respect to the local lifts, that is,
\begin{equation}
\label{eq:local-lift-overlap}
\varphi_{\alpha\beta}^P\circ
\widetilde\phi_\beta(u)
=
\widetilde\phi_\alpha\bigl(\rho_{\alpha\beta}(u)\bigr)
\circ
\varphi_{\alpha\beta}^P,\;\forall u\in Q^n.
\end{equation}
\end{enumerate}
\end{prop}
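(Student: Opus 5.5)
The plan is to prove both implications by transporting lifted actions between the global object \(\widetilde P\to\widetilde M\) with its \(\mathcal G_M\)-action and the chartwise data over the \(\varphi_\alpha^M(U_\alpha^M)\). The dictionary between the two sides is the local trivializations \(\widetilde\varphi_\alpha^B\) of the universal covering. Over \(p^{-1}(U_\alpha^B)\cong U_\alpha^B\times\pi_1(B_M)\), the sheet \(\{a_\alpha=1\}\) gives a section \(\sigma_\alpha:U_\alpha^M\to\widetilde M\), \(x\mapsto(\widetilde b_\alpha(x),x)\). This section identifies \(P|_{U_\alpha^M}\) with \(\widetilde P|_{\sigma_\alpha(U_\alpha^M)}\) through \(p_0\mapsto(\widetilde b_\alpha(\pi_M\pi_P(p_0)),p_0)\). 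By formula \eqref{eq:untwisted-action} with \(a=1\) and \(a_\alpha=1\), the sheet \(\sigma_\alpha(U_\alpha^M)\) is \(\phi_Q\)-invariant, and under \(\varphi_\alpha^M\) the restricted action is the regular action twisted by the automorphism \(\rho_\alpha\).

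\emph{Necessity.} Given \(\widetilde\phi_Q\) satisfying \eqref{eq:lifted-semidir-compatibility}, set \(P_\alpha:=((\varphi_\alpha^M)^{-1})^*P|_{U_\alpha^M}\) with tautological isomorphism \(\varphi_\alpha^P\). This gives (1) and (3), and local triviality is inherited from \(P\). Next define
\[
\widetilde\phi_\alpha(u):=\varphi_\alpha^P\circ\widetilde\phi_Q\bigl(\rho_\alpha^{-1}(u)\bigr)\big|_{\text{sheet }\alpha}\circ(\varphi_\alpha^P)^{-1}.
\]
This is a continuous homomorphism covering the regular action on \(\varphi_\alpha^M(U_\alpha^M)\), which proves (2). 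For (4), compare the sheets \(\alpha\) and \(\beta\) over \(U_{\alpha\beta}^B\). They differ by the deck element \(a_{\alpha\beta}\), so \(\varphi_{\alpha\beta}^P\) corresponds to \(\widetilde\phi_\pi(a_{\alpha\beta})^{\pm1}\) restricted to the sheets. Relation \eqref{eq:lifted-semidir-compatibility}, combined with \(\rho_{\alpha\beta}=\rho_\alpha\rho(a_{\alpha\beta})\rho_\beta^{-1}\), then turns into \eqref{eq:local-lift-overlap}. This is a bookkeeping computation mirroring the proof of Lemma~\ref{lem:untwisted-action}.

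\emph{Sufficiency.} Given the family, define \(\widetilde\phi_Q\) on \(\widetilde P|_{p^{-1}(U_\alpha^B)}\), one sheet \(a_\alpha=c\) at a time. Identify that sheet of \(\widetilde P\) with \(P|_{U_\alpha^M}\) and then, via \(\varphi_\alpha^P\), with \(P_\alpha\). Let \(u\) act by \(\widetilde\phi_\alpha\bigl((\rho_\alpha\circ\rho(c))(u)\bigr)\), which is exactly the twist appearing in \eqref{eq:untwisted-action}. Then:
\begin{enumerate}
\item[(i)] Condition (4) together with the cocycle relation \(a_\alpha=a_{\alpha\beta}a_\beta\) shows that the definitions for \(\alpha\) and \(\beta\) agree on overlaps. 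This is the same algebra as Lemma~\ref{lem:untwisted-action}, with \eqref{eq:local-lift-overlap} replacing the equivariance of \(\varphi_{\alpha\beta}^M\).
\item[(ii)] Each local piece is a homomorphism covering \(\phi_Q\) and commuting with the right \(Q^k\)-action, because \(\widetilde\phi_\alpha\) is a lift. Hence the glued map is a lift covering \(\phi_Q\).
\item[(iii)] Continuity of \(Q^n\times\widetilde P\to\widetilde P\) holds because it is local over the open cover \(\{p^{-1}(U_\alpha^B)\}\).
\item[(iv)] \(\widetilde\phi_\pi(a)\) only shifts the sheet label \(c\mapsto ca^{-1}\) and is the identity on the \(P\)-component. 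The twist therefore changes from \(\rho(c)\) to \(\rho(ca^{-1})\), which is precisely \eqref{eq:lifted-semidir-compatibility}.
\end{enumerate}

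The main obstacle is keeping consistent conventions: left versus right deck actions, \(a\) versus \(a^{-1}\), and the placement of \(\rho_\alpha\). These must be chosen so that the sheet-change maps really equal \(\varphi_{\alpha\beta}^P\) and the exponent of \(\rho(a_{\alpha\beta})\) matches. To handle this, I would first fix everything by requiring that the \(a=1\) case reproduce \eqref{eq:untwisted-action} exactly, and then check that step (i) reduces to the identity \(\rho_{\alpha\beta}^{-1}\rho_\alpha\rho(a_\alpha a^{-1})=\rho_\beta\rho(a_\beta a^{-1})\) already verified in Lemma~\ref{lem:untwisted-action}.
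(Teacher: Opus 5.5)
Your proposal is correct and follows essentially the same route as the paper. For sufficiency, you define $\widetilde\phi_Q(u)$ on the sheet labelled $a_\alpha$ by $\widetilde\phi_\alpha\bigl((\rho_\alpha\circ\rho(a_\alpha))(u)\bigr)$ transported through $\varphi_\alpha^P$, check chart-independence using (4) together with $a_\alpha=a_{\alpha\beta}a_\beta$, and obtain deck compatibility from the sheet shift. For necessity, you restrict $\widetilde\phi_Q(\rho_\alpha^{-1}(u))$ to the identity sheet and derive (4) from the deck relation together with $\rho_{\alpha\beta}=\rho_\alpha\rho(a_{\alpha\beta})\rho_\beta^{-1}$. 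The differences are only cosmetic: your pullback $P_\alpha$ is canonically the paper's choice $P|_{U_\alpha^M}$ with projection $\varphi_\alpha^M\circ\pi_P$ and $\varphi_\alpha^P=\mathrm{id}$, and you make explicit the homomorphism and continuity checks that the paper leaves implicit.
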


\begin{proof}
Suppose first that a family satisfying the conditions above, exists. Let $u\in Q^n,\;
(\widetilde b,p_0)\in\widetilde P$
and assume that
\[
\widetilde b\in p^{-1}(U_\alpha^B),
\qquad
\widetilde\varphi_\alpha^B(\widetilde b)
=
\bigl(p(\widetilde b),a_\alpha\bigr).
\]
Define
\begin{equation}
\label{eq:local-lift-from-charts}
\widetilde\phi_Q(u)(\widetilde b,p_0)
:=
\left(
\widetilde b,\,
(\varphi_\alpha^P)^{-1}
\left(
\widetilde\phi_\alpha
\bigl(
(\rho_\alpha\circ\rho(a_\alpha))(u)
\bigr)
\bigl(\varphi_\alpha^P(p_0)\bigr)
\right)
\right).
\end{equation}
We first verify that the definition is independent of the choice of the local chart. Suppose that
\[
\widetilde b \in p^{-1}(U_\alpha^B)\cap p^{-1}(U_\beta^B),
\]
and write
\[
\widetilde\varphi_\alpha^B(\widetilde b) = \bigl(p(\widetilde b),a_\alpha\bigr), \qquad \widetilde\varphi_\beta^B(\widetilde b) = \bigl(p(\widetilde b),a_\beta\bigr).
\]
Then $a_\alpha=a_{\alpha\beta}a_\beta$.
Put $v := (\rho_\beta\circ\rho(a_\beta))(u)$.
By the overlap compatibility condition~\eqref{eq:local-lift-overlap},
\[
\varphi_{\alpha\beta}^P\circ \widetilde\phi_\beta(v) = \widetilde\phi_\alpha\bigl(\rho_{\alpha\beta}(v)\bigr) \circ \varphi_{\alpha\beta}^P.
\]
Moreover,
\begin{align*}
\rho_{\alpha\beta}(v)
&=
\bigl(
\rho_\alpha\circ
\rho(a_{\alpha\beta})\circ
\rho_\beta^{-1}
\bigr)
\bigl(
(\rho_\beta\circ\rho(a_\beta))(u)
\bigr)
\\
&=
\bigl(
\rho_\alpha\circ
\rho(a_{\alpha\beta}a_\beta)
\bigr)(u)
\\
&=
\bigl(
\rho_\alpha\circ\rho(a_\alpha)
\bigr)(u).
\end{align*}
Since $\varphi_{\alpha\beta}^P
=
\varphi_\alpha^P\circ(\varphi_\beta^P)^{-1}$, it follows that
\[(\varphi_\alpha^P)^{-1}
\left(
\widetilde\phi_\alpha
\bigl(
(\rho_\alpha\circ\rho(a_\alpha))(u)
\bigr)
\bigl(\varphi_\alpha^P(p_0)\bigr)
\right)=
(\varphi_\beta^P)^{-1}
\left(
\widetilde\phi_\beta
\bigl(
(\rho_\beta\circ\rho(a_\beta))(u)
\bigr)
\bigl(\varphi_\beta^P(p_0)\bigr)
\right).\]
Hence~\eqref{eq:local-lift-from-charts} is independent of the choice of \(\alpha\) and defines a lift of the global \(Q^n\)-action on \(\widetilde M\).

We next verify compatibility with the deck transformations. Let $a\in\pi_1(B_M),\;
u\in Q^n$,
and suppose that $\widetilde\varphi_\alpha^B(\widetilde b)
=
\bigl(p(\widetilde b),a_\alpha\bigr)$.
With our convention for the right deck action,
\[
\widetilde\varphi_\alpha^B(\widetilde b\cdot a)
=
\bigl(p(\widetilde b),a_\alpha a\bigr)
\]
and hence
\[
\widetilde\varphi_\alpha^B(\widetilde b\cdot a^{-1})
=
\bigl(p(\widetilde b),a_\alpha a^{-1}\bigr).
\]
Applying successively
\(\widetilde\phi_\pi(a)^{-1}\),
\(\widetilde\phi_Q(u)\), and
\(\widetilde\phi_\pi(a)\), we obtain
\begin{align*}
&
\widetilde\phi_\pi(a)\circ
\widetilde\phi_Q(u)\circ
\widetilde\phi_\pi(a)^{-1}
(\widetilde b,p_0)
\\
&=
\left(
\widetilde b,\,
(\varphi_\alpha^P)^{-1}
\left(
\widetilde\phi_\alpha
\bigl(
(\rho_\alpha\circ\rho(a_\alpha a))(u)
\bigr)
\bigl(\varphi_\alpha^P(p_0)\bigr)
\right)
\right)
\\
&=
\left(
\widetilde b,\,
(\varphi_\alpha^P)^{-1}
\left(
\widetilde\phi_\alpha
\bigl(
(\rho_\alpha\circ\rho(a_\alpha))
(\rho(a)(u))
\bigr)
\bigl(\varphi_\alpha^P(p_0)\bigr)
\right)
\right)
\\
&=
\widetilde\phi_Q\bigl(\rho(a)(u)\bigr)
(\widetilde b,p_0).
\end{align*}

Hence \(\widetilde\phi_Q\) is a lift of the local action in the sense of Definition~\ref{def:local-Q-lifting}.

For the opposite direction, suppose that a lift
\[
\widetilde\phi_Q:
Q^n\longrightarrow\operatorname{Isom}(\widetilde P)
\]
satisfying~\eqref{eq:lifted-semidir-compatibility} is given. Put
\[
P_\alpha:=P|_{U_\alpha^M},
\qquad
\pi_\alpha:=\varphi_\alpha^M\circ\pi_P,
\qquad
\varphi_\alpha^P:=\operatorname{id}_{P|_{U_\alpha^M}}.
\]
For \(p_0\in P_\alpha\), and \(\e\in \pi_1(B_M)\) the identity element, 
\[
\left(
(\widetilde\varphi_\alpha^B)^{-1}
\bigl(\pi_M(\pi_P(p_0)),e\bigr),
p_0
\right)
\]
is a point of \(\widetilde P\). We define
\[
\widetilde\phi_\alpha:
Q^n\longrightarrow\operatorname{Isom}(P_\alpha)
\]
by requiring
\begin{align*}
&
\widetilde\phi_Q\bigl(\rho_\alpha^{-1}(u)\bigr)
\left(
(\widetilde\varphi_\alpha^B)^{-1}
\bigl(\pi_M(\pi_P(p_0)),e\bigr),
p_0
\right)
\\
&\qquad =
\left(
(\widetilde\varphi_\alpha^B)^{-1}
\bigl(\pi_M(\pi_P(p_0)),e\bigr),
\widetilde\phi_\alpha(u)(p_0)
\right).
\end{align*}
This definition is well defined because the restriction of the untwisted action to the subgroup $Q^n\cong Q^n\times\{e\}$
fixes the \(\widetilde B_M\)-coordinate. Hence
\(\widetilde\phi_Q(\rho_\alpha^{-1}(u))\) maps a point of the form
\[
\left(
(\widetilde\varphi_\alpha^B)^{-1}
\bigl(\pi_M(\pi_P(p_0)),e\bigr),
p_0
\right)
\]
to another point with the same first coordinate, and therefore uniquely determines \(\widetilde\phi_\alpha(u)(p_0)\). Since \(\widetilde\phi_Q\) is a homomorphism and \(\rho_\alpha^{-1}\) is an automorphism of \(Q^n\), we have
\[
\widetilde\phi_\alpha(uv)
=
\widetilde\phi_\alpha(u)\circ
\widetilde\phi_\alpha(v)
\]
for every \(u,v\in Q^n\). Moreover, each \(\widetilde\phi_\alpha(u)\) is a principal \(Q^k\)-bundle automorphism covering the regular \(Q^n\)-action on
\(\varphi_\alpha^M(U_\alpha^M)\). Thus
\[
\widetilde\phi_\alpha:
Q^n\longrightarrow\operatorname{Isom}(P_\alpha)
\]
is a lift of the local regular action. It remains to verify the compatibility on overlaps. Let
\[
p_0\in P|_{U_{\alpha\beta}^M},
\qquad
b:=\pi_M(\pi_P(p_0))\in B_M.
\]
Using the transition function of the universal covering, we have
\[
(\widetilde\varphi_\beta^B)^{-1}(b,e)
=
(\widetilde\varphi_\alpha^B)^{-1}(b,a_{\alpha\beta}).
\]
Equivalently,
\[
(\widetilde\varphi_\beta^B)^{-1}(b,e)
=
\phi_\pi(a_{\alpha\beta}^{-1})
\left(
(\widetilde\varphi_\alpha^B)^{-1}(b,e)
\right).
\]
Therefore, by the lifted compatibility relation
\eqref{eq:lifted-semidir-compatibility},
\begin{align*}
\left(
(\widetilde\varphi_\beta^B)^{-1}(b,e),
\widetilde\phi_\beta(u)(p_0)
\right)
&=
\widetilde\phi_Q\bigl(\rho_\beta^{-1}(u)\bigr)
\left(
(\widetilde\varphi_\beta^B)^{-1}(b,e),
p_0
\right)
\\
&=
\widetilde\phi_Q\bigl(\rho_\beta^{-1}(u)\bigr)
\widetilde\phi_\pi(a_{\alpha\beta}^{-1})
\left(
(\widetilde\varphi_\alpha^B)^{-1}(b,e),
p_0
\right)
\\
&=
\widetilde\phi_\pi(a_{\alpha\beta}^{-1})
\widetilde\phi_Q
\bigl(
\rho(a_{\alpha\beta})\circ\rho_\beta^{-1}(u)
\bigr)
\left(
(\widetilde\varphi_\alpha^B)^{-1}(b,e),
p_0
\right).
\end{align*}
Since $\rho_{\alpha\beta}
=
\rho_\alpha\circ
\rho(a_{\alpha\beta})\circ
\rho_\beta^{-1}$,
we have $\rho(a_{\alpha\beta})\circ\rho_\beta^{-1}
=
\rho_\alpha^{-1}\circ\rho_{\alpha\beta}$ and hence the last expression can be computed as
\begin{align*}
\left(
(\widetilde\varphi_\beta^B)^{-1}(b,e),
\widetilde\phi_\beta(u)(p_0)
\right)
&=
\widetilde\phi_\pi(a_{\alpha\beta}^{-1})
\widetilde\phi_Q
\bigl(
\rho_\alpha^{-1}(\rho_{\alpha\beta}(u))
\bigr)
\left(
(\widetilde\varphi_\alpha^B)^{-1}(b,e),
p_0
\right)
\\
&=
\left(
(\widetilde\varphi_\beta^B)^{-1}(b,e),
\widetilde\phi_\alpha
\bigl(\rho_{\alpha\beta}(u)\bigr)(p_0)
\right).
\end{align*}
Thus
\[
\widetilde\phi_\beta(u)(p_0)
=
\widetilde\phi_\alpha
\bigl(\rho_{\alpha\beta}(u)\bigr)(p_0).
\]
More generally, with the bundle identifications \(\varphi_\alpha^P\) and \(\varphi_\beta^P\) restored, this becomes
\[
\varphi_{\alpha\beta}^P\circ
\widetilde\phi_\beta(u)
=
\widetilde\phi_\alpha
\bigl(\rho_{\alpha\beta}(u)\bigr)
\circ
\varphi_{\alpha\beta}^P.
\]
Therefore the family
\[
\left\{
(P_\alpha,\widetilde\phi_\alpha,\varphi_\alpha^P) \right\}_{\alpha\in A}
\]
has all the required properties.
\end{proof}

\section{The nonabelian descent obstruction}
\label{sec:nonabelian-descent}

We keep the notation and setup of Section~\ref{sec:local-liftings}. In particular, the pulled-back space \(\widetilde M=p^*M\) carries the globally defined \(Q^n\)-action $\phi_Q:Q^n\longrightarrow\operatorname{Homeo}(\widetilde M)$,
and the pullback bundle $Q^k\longrightarrow\widetilde P\longrightarrow\widetilde M$
carries the canonical lift of the deck-transformation action $\widetilde\phi_\pi:\pi_1(B_M)\longrightarrow
\operatorname{Isom}(\widetilde P)$.
For simplicity, write
\[
D_a:=\widetilde\phi_\pi(a),
\;\; a\in\pi_1(B_M).
\]
With the convention for the right deck action fixed in
Section~\ref{sec:untwisting}, we have $D_{ab}=D_aD_b$. Indeed, for every \((\widetilde b,p_0)\in\widetilde P\),
\begin{align*}
D_aD_b(\widetilde b,p_0)
&=
D_a(\widetilde b\cdot b^{-1},p_0)
\\
&=
(\widetilde b\cdot b^{-1}a^{-1},p_0)
\\
&=
(\widetilde b\cdot (ab)^{-1},p_0)
\\
&=
D_{ab}(\widetilde b,p_0).
\end{align*}

Suppose that the global \(Q^n\)-action on \(\widetilde M\) admits a lift $\widetilde\phi_Q:
Q^n\longrightarrow\operatorname{Isom}(\widetilde P)$ and set $L_u:=\widetilde\phi_Q(u),\;
u\in Q^n$.
The  preliminary lift \(L\) is not assumed to satisfy the compatibility relation with the deck transformations. The purpose of this section is to measure the failure of that compatibility.

Let
\[
\mathcal G(\widetilde P)
:=
\operatorname{Aut}_{\widetilde M}(\widetilde P)
\]
be the gauge group of \(\widetilde P\). As usual, $\mathcal G(\widetilde P)
\cong
\Gamma\bigl(\operatorname{Ad}(\widetilde P)\bigr)$,
where $\operatorname{Ad}(\widetilde P)
=
\widetilde P\times_{Q^k}Q^k$
and \(Q^n\) acts on itself by conjugation. 

\subsection{The descent defect}

To lighten the notation we henceforth set $\Gamma:=\pi_1(B_M)$. For \(a\in\Gamma\) and \(u\in Q^n\), both $D_a^{-1}L_{\rho(a)(u)}D_a$
and $L_u$
cover the same homeomorphism \(\phi_Q(u)\) of \(\widetilde M\). Their difference is therefore a gauge transformation.

\begin{definition}
\label{def:descent-defect}
The \emph{descent defect} associated with the preliminary lift $L:Q^n\longrightarrow\operatorname{Isom}(\widetilde P)$
is the map
\[
\Theta_L:
\Gamma\times Q^n
\longrightarrow
\mathcal G(\widetilde P)
\]
\begin{equation}
\label{eq:descent-defect}
\Theta_L(a,u)
:=
D_a^{-1}L_{\rho(a)(u)}D_aL_u^{-1}.
\end{equation}
\end{definition}

Equivalently,
\begin{equation}
\label{eq:defect-rearranged}
D_a^{-1}L_{\rho(a)(u)}D_a
=
\Theta_L(a,u)L_u.
\end{equation}

The compatibility condition required for a lift of the local action is $D_aL_uD_a^{-1}
=
L_{\rho(a)(u)}$ and thus the preliminary lift \(L\) is compatible with the deck transformations if and only if
\[
\Theta_L(a,u)=1,\;\forall a\in\Gamma,\;u\in Q^n.
\]

Note that the lift \(L\) induces an action of \(Q^n\) on the gauge group by conjugation:
\[
{}^u\gamma
:=
L_u\gamma L_u^{-1},\;\forall u\in Q^n,\gamma\in\mathcal G(\widetilde P).
\]

\begin{lemma}
\label{lem:defect-Q-cocycle}
For every fixed \(a\in\Gamma\), the map
\[
\Theta_L(a,-):
Q^n\longrightarrow\mathcal G(\widetilde P),
\]
satisfies the nonabelian cocycle identity
\begin{equation}
\label{eq:defect-Q-cocycle}
\Theta_L(a,uv)
=
\Theta_L(a,u)\,
{}^u\Theta_L(a,v),\;\;\forall u,v\in Q^n.
\end{equation}
\end{lemma}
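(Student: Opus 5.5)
The identity follows by direct substitution into the definition \eqref{eq:descent-defect}, using only two facts: \(L\) is a homomorphism, and \(\rho(a)\) is an automorphism of \(Q^n\). Fix \(a\in\Gamma\) and \(u,v\in Q^n\). Since \(\rho(a)(uv)=\rho(a)(u)\,\rho(a)(v)\) and \(L\) is a homomorphism, we have \(L_{\rho(a)(uv)}=L_{\rho(a)(u)}L_{\rho(a)(v)}\) and \(L_{uv}^{-1}=L_v^{-1}L_u^{-1}\). Inserting \(D_aD_a^{-1}=1\) between the two factors gives
\[
\Theta_L(a,uv)=\bigl(D_a^{-1}L_{\rho(a)(u)}D_a\bigr)\bigl(D_a^{-1}L_{\rho(a)(v)}D_a\bigr)L_v^{-1}L_u^{-1}.
\]

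Next we use the rearranged form \eqref{eq:defect-rearranged} for each bracket. The first bracket becomes \(\Theta_L(a,u)L_u\). The second bracket becomes \(\Theta_L(a,v)L_v\), so the factor \(L_v\) cancels against \(L_v^{-1}\). This leaves
\[
\Theta_L(a,uv)=\Theta_L(a,u)\,L_u\,\Theta_L(a,v)\,L_u^{-1}=\Theta_L(a,u)\,{}^u\Theta_L(a,v),
\]
which is \eqref{eq:defect-Q-cocycle} by the definition of the conjugation action \({}^u\gamma=L_u\gamma L_u^{-1}\).

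The only point that needs care is that every symbol is a genuine element of \(\operatorname{Isom}(\widetilde P)\). The conjugation \({}^u\Theta_L(a,v)\) must lie again in \(\mathcal G(\widetilde P)\). This holds because \(\mathcal G(\widetilde P)\) is normal in the group of bundle automorphisms covering homeomorphisms of \(\widetilde M\): conjugating a gauge transformation by \(L_u\) gives a map covering \(\phi_Q(u)\circ\mathrm{id}\circ\phi_Q(u)^{-1}=\mathrm{id}\). The argument also relies on the earlier observation that \(\Theta_L(a,u)\) is a gauge transformation, which the paper established right before Definition~\ref{def:descent-defect}. None of these steps presents a real obstacle; the proof is a routine algebraic manipulation.
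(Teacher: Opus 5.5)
Your proof is correct and is essentially the paper's argument: expand $\Theta_L(a,uv)$ using that $L$ is a homomorphism and $\rho(a)$ is an automorphism, insert $D_aD_a^{-1}$ (and, implicitly, $L_u^{-1}L_u$), and regroup into $\Theta_L(a,u)\,L_u\Theta_L(a,v)L_u^{-1}$. The only cosmetic difference is that you regroup via the rearranged form \eqref{eq:defect-rearranged}, whereas the paper inserts $L_u^{-1}L_u$ directly. Your extra remark that ${}^u\Theta_L(a,v)$ stays in $\mathcal G(\widetilde P)$ is a harmless addition.
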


\begin{proof}
Since \(L\) is a homomorphism and \(\rho(a)\) is an automorphism of \(Q^n\), we have
\begin{align*}
\Theta_L(a,uv)
&=
D_a^{-1}
L_{\rho(a)(uv)}
D_a
L_{uv}^{-1}
\\
&=
D_a^{-1}
L_{\rho(a)(u)}
L_{\rho(a)(v)}
D_a
L_v^{-1}L_u^{-1}
\\
&=
\left(
D_a^{-1}L_{\rho(a)(u)}D_aL_u^{-1}
\right)
L_u
\left(
D_a^{-1}L_{\rho(a)(v)}D_aL_v^{-1}
\right)
L_u^{-1}
\\
&=
\Theta_L(a,u)\,
L_u\Theta_L(a,v)L_u^{-1}
\\
&=
\Theta_L(a,u)\,
{}^u\Theta_L(a,v).
\end{align*}
\end{proof}

\noindent Thus, for each \(a\in\Gamma\), the defect \(\Theta_L(a,-)\) is a nonabelian \(1\)-cocycle of \(Q^n\) valued in the gauge group. There is also a cocycle-type identity in the \(\Gamma\)-variable.

\begin{lemma}
\label{lem:defect-Gamma-cocycle}
For all \(a,b\in\Gamma\) and \(u\in Q^n\), one has
\begin{equation}
\label{eq:defect-Gamma-cocycle}
\Theta_L(ab,u)
=
D_b^{-1}
\Theta_L\bigl(a,\rho(b)(u)\bigr)
D_b\,
\Theta_L(b,u).
\end{equation}
\end{lemma}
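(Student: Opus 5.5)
Because the identity is purely algebraic, the plan is to expand the left-hand side from Definition~\ref{def:descent-defect} and reorganise it into the right-hand side. The only inputs are $D_{ab}=D_aD_b$ (verified at the start of this section), $\rho(ab)=\rho(a)\circ\rho(b)$ (since $\rho$ is a homomorphism), and the rearranged form \eqref{eq:defect-rearranged} of the defect.

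First, I would write
\[
\Theta_L(ab,u)=D_{ab}^{-1}L_{\rho(ab)(u)}D_{ab}L_u^{-1}
=D_b^{-1}\bigl(D_a^{-1}L_{\rho(a)(\rho(b)(u))}D_a\bigr)D_bL_u^{-1}.
\]
Second, I would apply \eqref{eq:defect-rearranged} with $a$ and $v:=\rho(b)(u)$ to the bracketed factor. This gives
\[
D_a^{-1}L_{\rho(a)(v)}D_a=\Theta_L(a,v)L_v,
\]
so that
\[
\Theta_L(ab,u)=D_b^{-1}\Theta_L(a,v)\,L_{\rho(b)(u)}D_bL_u^{-1}.
\]
Third, I would insert $D_bD_b^{-1}$ between $\Theta_L(a,v)$ and $L_{\rho(b)(u)}$, obtaining
\[
\Theta_L(ab,u)=\bigl(D_b^{-1}\Theta_L(a,v)D_b\bigr)\bigl(D_b^{-1}L_{\rho(b)(u)}D_bL_u^{-1}\bigr).
\]
The second factor is exactly $\Theta_L(b,u)$, which yields \eqref{eq:defect-Gamma-cocycle}.

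There is no real obstacle. The one point to check is that $D_b^{-1}\Theta_L(a,v)D_b$ is again a gauge transformation, so that the formula is an identity in $\mathcal G(\widetilde P)$. This holds because $\mathcal G(\widetilde P)$ is normal in the group of bundle automorphisms covering homeomorphisms of $\widetilde M$: conjugating a map covering $\operatorname{id}_{\widetilde M}$ by $D_b$ gives a map covering $\phi_\pi(b)^{-1}\phi_\pi(b)=\operatorname{id}$. Alternatively, this already follows from the computed equality, since both $\Theta_L(ab,u)$ and $\Theta_L(b,u)$ lie in $\mathcal G(\widetilde P)$.

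The remaining care is in the conventions. The argument must use the deck relation in the form $D_{ab}=D_aD_b$, with the inverses ordered as $D_{ab}^{-1}=D_b^{-1}D_a^{-1}$, together with the composition order of $\rho$. These are exactly what make the inner conjugation by $D_a$ appear inside the outer conjugation by $D_b$.
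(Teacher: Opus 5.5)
Your proof is correct and matches the paper's argument: the paper also expands $\Theta_L(ab,u)$ using $D_{ab}=D_aD_b$ and $\rho(ab)=\rho(a)\circ\rho(b)$, and then inserts $L_{\rho(b)(u)}^{-1}L_{\rho(b)(u)}$ and $D_bD_b^{-1}$ to split off $D_b^{-1}\Theta_L\bigl(a,\rho(b)(u)\bigr)D_b$ and $\Theta_L(b,u)$. Your appeal to the rearranged form of the defect is just a repackaging of the first of these insertions.
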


\begin{proof}
Using \(D_{ab}=D_aD_b\) and $\rho(ab)=\rho(a)\circ\rho(b)$,
we obtain
\begin{align*}
\Theta_L(ab,u)
&=
D_{ab}^{-1}
L_{\rho(ab)(u)}
D_{ab}
L_u^{-1}
\\
&=
D_b^{-1}D_a^{-1}
L_{\rho(a)(\rho(b)(u))}
D_aD_b
L_u^{-1}
\\
&=
D_b^{-1}
\left[
D_a^{-1}
L_{\rho(a)(\rho(b)(u))}
D_a
L_{\rho(b)(u)}^{-1}
\right]
D_b
\\
&\qquad\cdot
D_b^{-1}
L_{\rho(b)(u)}
D_b
L_u^{-1}
\\
&=
D_b^{-1}
\Theta_L\bigl(a,\rho(b)(u)\bigr)
D_b\,
\Theta_L(b,u).
\end{align*}
\end{proof}

Equations~\eqref{eq:defect-Q-cocycle} and \eqref{eq:defect-Gamma-cocycle} are the nonabelian counterparts of the two cocycle relations appearing in the theory of lifts of local torus actions \cite{Yoshida1}.

\subsection{Changing the preliminary lift}

Let $\tau:Q^n\longrightarrow\mathcal G(\widetilde P)$ be a continuous map, and define
\[
L_u^\tau:=\tau(u)L_u.
\]
\noindent The map
\[
L^\tau:Q^n\longrightarrow\operatorname{Isom}(\widetilde P),\;\;\;\;u\mapsto L_u^\tau\] is again a lift of the global \(Q^n\)-action  if and only if it is a homomorphism. In fact we have the following.

\begin{lemma}
\label{lem:gauge-modified-global-lift}
 The map
\[
L^\tau:Q^n\longrightarrow\operatorname{Isom}(\widetilde P),
\]
is a homomorphism if and only if
\begin{equation}
\label{eq:tau-crossed-cocycle}
\tau(uv)
=
\tau(u)\,{}^u\tau(v),\;\;\;\forall u,v\in Q^n.
\end{equation}
\end{lemma}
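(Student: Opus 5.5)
The plan is to expand both sides of the homomorphism condition for \(L^\tau\) directly, using only that \(L\) is a homomorphism and that \(\tau\) takes values in the gauge group. This mirrors the computation in the proof of Theorem~\ref{thm:enumeration-global-liftings}.

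First, note that each \(L^\tau_u=\tau(u)L_u\) is a principal \(Q^k\)-bundle automorphism covering \(\phi_Q(u)\), because \(\tau(u)\) covers the identity of \(\widetilde M\). Continuity in \(u\) follows from continuity of \(\tau\) and \(L\) together with continuity of composition in \(\operatorname{Isom}(\widetilde P)\). So the only issue is the group law.

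For \(u,v\in Q^n\), insert \(L_u^{-1}L_u\) and use \(L_uL_v=L_{uv}\):
\[
L^\tau_uL^\tau_v=\tau(u)L_u\tau(v)L_v=\tau(u)\bigl(L_u\tau(v)L_u^{-1}\bigr)L_uL_v=\tau(u)\,{}^u\tau(v)\,L_{uv}.
\]
On the other hand, \(L^\tau_{uv}=\tau(uv)L_{uv}\). Since \(L_{uv}\) is invertible in \(\operatorname{Isom}(\widetilde P)\), we can cancel it on the right. Hence
\[
L^\tau_uL^\tau_v=L^\tau_{uv}
\quad\Longleftrightarrow\quad
\tau(uv)=\tau(u)\,{}^u\tau(v),
\]
which is exactly \eqref{eq:tau-crossed-cocycle}. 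This gives both directions at once.

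Finally, a map satisfying \(L^\tau_uL^\tau_v=L^\tau_{uv}\) automatically has \(L^\tau_1=\mathrm{id}\). Equivalently, \eqref{eq:tau-crossed-cocycle} with \(u=v=1\) forces \(\tau(1)=1\), since \(L_1=\mathrm{id}\). So no separate unit condition is needed. There is no real obstacle here; the only point to state carefully is that \({}^u\tau(v)=L_u\tau(v)L_u^{-1}\) again lies in \(\mathcal G(\widetilde P)\), because \(\mathcal G(\widetilde P)\) is normal in \(\operatorname{Isom}(\widetilde P)\), so the manipulation stays inside the gauge group.
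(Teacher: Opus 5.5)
Your proof is correct and is the paper's argument: insert $L_u^{-1}L_u$ to get $L^\tau_uL^\tau_v=\tau(u)\,{}^u\tau(v)\,L_{uv}$, compare with $L^\tau_{uv}=\tau(uv)L_{uv}$, and cancel $L_{uv}$. Your additional remarks on continuity, the unit, and normality of $\mathcal G(\widetilde P)$ in $\operatorname{Isom}(\widetilde P)$ are accurate but are not needed for the statement.
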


\begin{proof}
We have
\begin{align*}
L_u^\tau L_v^\tau
&=
\tau(u)L_u\tau(v)L_v
\\
&=
\tau(u)
\bigl(L_u\tau(v)L_u^{-1}\bigr)
L_uL_v
\\
&=
\tau(u)\,{}^u\tau(v)L_{uv}.
\end{align*}
Therefore $L_u^\tau L_v^\tau=L_{uv}^\tau$
if and only if $\tau(uv)=\tau(u)\,{}^u\tau(v)$.
\end{proof}

Thus the acceptable modifications of a preliminary lift are the continuous crossed homomorphisms 
\[
\tau:Q^n\longrightarrow\mathcal G(\widetilde P),
\]
with respect to the conjugation action induced by \(L\). We now calculate the descent defect associated with \(L^\tau\).

\begin{lemma}
\label{lem:defect-transformation}
Suppose that \(\tau\) satisfies
\eqref{eq:tau-crossed-cocycle}. Then
\begin{equation}
\label{eq:defect-transformation}
\Theta_{L^\tau}(a,u)
=
D_a^{-1}\tau\bigl(\rho(a)(u)\bigr)D_a\,
\Theta_L(a,u)\,
\tau(u)^{-1},\;\;\forall a\in\Gamma,\;u\in Q^n.
\end{equation}

\end{lemma}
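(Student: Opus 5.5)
The plan is a direct substitution into Definition~\ref{def:descent-defect}, followed by regrouping, in the same spirit as the proofs of Lemmas~\ref{lem:defect-Q-cocycle} and \ref{lem:defect-Gamma-cocycle}.

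First, note that Lemma~\ref{lem:gauge-modified-global-lift} together with \eqref{eq:tau-crossed-cocycle} makes \(L^\tau\) a homomorphism covering \(\phi_Q\). This is because each \(\tau(u)\) covers the identity of \(\widetilde M\). Hence \(L^\tau\) is again a preliminary lift, and \(\Theta_{L^\tau}\) is defined by \eqref{eq:descent-defect}. In fact, the crossed homomorphism property is used only to guarantee this; the formula itself is a purely algebraic identity valid for any gauge-valued map \(\tau\).

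Next, I will write
\[
\Theta_{L^\tau}(a,u)=D_a^{-1}\,\tau\bigl(\rho(a)(u)\bigr)\,L_{\rho(a)(u)}\,D_a\,L_u^{-1}\,\tau(u)^{-1},
\]
using \((L^\tau_u)^{-1}=L_u^{-1}\tau(u)^{-1}\). I will then insert \(D_aD_a^{-1}\) immediately after \(\tau(\rho(a)(u))\). This splits the expression as
\[
\bigl(D_a^{-1}\tau(\rho(a)(u))D_a\bigr)\bigl(D_a^{-1}L_{\rho(a)(u)}D_aL_u^{-1}\bigr)\tau(u)^{-1},
\]
and the middle factor is exactly \(\Theta_L(a,u)\) by \eqref{eq:descent-defect}. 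This yields \eqref{eq:defect-transformation}.

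Finally, I would remark that \(D_a^{-1}\tau(\rho(a)(u))D_a\) lies in \(\mathcal G(\widetilde P)\). The reason is that \(D_a\) is a bundle automorphism covering \(\phi_\pi(a)\), and conjugating a gauge transformation by it gives a map covering the identity. Consequently, every factor in \eqref{eq:defect-transformation} is a gauge transformation, consistent with the fact that \(\Theta_{L^\tau}\) takes values in \(\mathcal G(\widetilde P)\).

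There is no real obstacle here. The only point requiring care is the order of the noncommuting factors: \(\tau(u)^{-1}\) must appear on the right and the conjugated \(\tau\) on the left, since the gauge group is nonabelian.
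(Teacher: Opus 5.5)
Your proposal is correct and follows the same route as the paper's proof: substitute $L^\tau_u=\tau(u)L_u$ and $(L^\tau_u)^{-1}=L_u^{-1}\tau(u)^{-1}$, insert $D_aD_a^{-1}$ immediately after $\tau(\rho(a)(u))$, and regroup so that the middle factor is $\Theta_L(a,u)$. Your side remarks are also accurate. The crossed-homomorphism hypothesis is needed only so that $L^\tau$ is again a preliminary lift, and conjugation by $D_a$ preserves $\mathcal G(\widetilde P)$.
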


\begin{proof}
By definition of \(L^\tau\), we have
\[
L^\tau_u=\tau(u)L_u,
\qquad
\bigl(L^\tau_u\bigr)^{-1}
=
L_u^{-1}\tau(u)^{-1}.
\]
Therefore
\begin{align*}
\Theta_{L^\tau}(a,u)
&=
D_a^{-1}
L^\tau_{\rho(a)(u)}
D_a
\bigl(L^\tau_u\bigr)^{-1}
\\
&=
D_a^{-1}
\tau\bigl(\rho(a)(u)\bigr)
L_{\rho(a)(u)}
D_a
L_u^{-1}
\tau(u)^{-1}.
\end{align*}
Now insert the identity
$D_aD_a^{-1}=1$
between \(\tau(\rho(a)(u))\) and \(L_{\rho(a)(u)}\). This gives
\begin{align*}
\Theta_{L^\tau}(a,u)
&=
D_a^{-1}
\tau\bigl(\rho(a)(u)\bigr)
D_a
D_a^{-1}
L_{\rho(a)(u)}
D_a
L_u^{-1}
\tau(u)^{-1}
\\
&=
\Bigl(
D_a^{-1}
\tau\bigl(\rho(a)(u)\bigr)
D_a
\Bigr)
\Bigl(
D_a^{-1}
L_{\rho(a)(u)}
D_a
L_u^{-1}
\Bigr)
\tau(u)^{-1}
\\
&=
\Bigl(
D_a^{-1}
\tau\bigl(\rho(a)(u)\bigr)
D_a
\Bigr)
\Theta_L(a,u)
\tau(u)^{-1}.
\end{align*}
\end{proof}

\subsection{Trivialization of the descent defect}

Let $Z_L^1\bigl(Q^n;\mathcal G(\widetilde P)\bigr)$
denote the set of continuous maps $\tau:Q^n\longrightarrow\mathcal G(\widetilde P)$
satisfying
\eqref{eq:tau-crossed-cocycle}
for every \(u,v\in Q^n\). Thus
$Z_L^1\bigl(Q^n;\mathcal G(\widetilde P)\bigr)$
is the set of continuous crossed homomorphisms with respect to the conjugation action induced by \(L\).

\begin{definition}
\label{def:descent-trivializable}
The descent defect \(\Theta_L\) is said to be \emph{trivializable} if there
exists $\tau\in
Z_L^1\bigl(Q^n;\mathcal G(\widetilde P)\bigr)$
such that, for every $a\in\Gamma,u\in Q^n$, the identity
\begin{equation}
\label{eq:descent-trivialization}
\Bigl( D_a^{-1}\, \tau\bigl(\rho(a)(u)\bigr)\, D_a \Bigr) \, \Theta_L(a,u) \, \tau(u)^{-1} =1,
\end{equation}
holds in $\mathcal G(\widetilde P)$.
\end{definition}

Equivalently, \(\Theta_L\) is trivializable if and only if the modified preliminary lift $L_u^\tau:=\tau(u)L_u$
satisfies $D_aL_u^\tau D_a^{-1}
=
L_{\rho(a)(u)}^\tau,\;\;\forall a\in\Gamma,\;u\in Q^n$.

\begin{definition}
\label{def:descent-obstruction}
The \emph{nonabelian descent obstruction} associated with the preliminary lift \(L\) is the obstruction to trivializing the descent defect \(\Theta_L\). We denote it formally by $o_{\mathrm{desc}}(P,L)$. 
We say that
\[
o_{\mathrm{desc}}(P,L)=0
\]
if and only if \(\Theta_L\) is trivializable in the sense of Definition~\ref{def:descent-trivializable}.
\end{definition}

\begin{prop}
\label{prop:descent-independence}
The trivializability of the descent defect is independent of the choice of preliminary lift.
\end{prop}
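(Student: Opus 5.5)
The plan is to reduce the statement to the enumeration of preliminary lifts, exactly as in the proof of Theorem~\ref{thm:enumeration-global-liftings}, and then to use the transformation law of Lemma~\ref{lem:defect-transformation} together with a composition rule for gauge modifications. Let \(L,L'\colon Q^n\to\operatorname{Isom}(\widetilde P)\) be two preliminary lifts. Both \(L_u\) and \(L'_u\) cover \(\phi_Q(u)\), so there is a unique \(\sigma(u)\in\mathcal G(\widetilde P)\) with \(L'_u=\sigma(u)L_u\), and \(\sigma\) is continuous. Since \(L'\) is a homomorphism, Lemma~\ref{lem:gauge-modified-global-lift} shows that \(\sigma\in Z^1_L\bigl(Q^n;\mathcal G(\widetilde P)\bigr)\). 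Thus \(L'=L^\sigma\).

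The key step is a bijection \(Z^1_L\to Z^1_{L'}\) that is compatible with modification. For \(\tau\in Z^1_L\), I would set \(\tau'(u):=\tau(u)\sigma(u)^{-1}\), so that \(\tau(u)L_u=\tau'(u)L'_u\), that is, \(L^\tau=(L')^{\tau'}\). I would then verify that \(\tau'\in Z^1_{L'}\). The quickest route avoids computing with the crossed identity directly. \(L^\tau\) is a homomorphism by Lemma~\ref{lem:gauge-modified-global-lift}, and it equals \(\tau'(u)L'_u\), so the converse direction of the same lemma, applied to the lift \(L'\), gives the crossed homomorphism identity for \(\tau'\) with respect to the conjugation action induced by \(L'\). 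Symmetrically, \(\tau'\mapsto\tau'\sigma\) maps \(Z^1_{L'}\) into \(Z^1_L\). Continuity is clear in both directions, since multiplication and inversion in \(\mathcal G(\widetilde P)\) are continuous.

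Now suppose \(\Theta_L\) is trivializable, witnessed by \(\tau\). By the remark after Definition~\ref{def:descent-trivializable}, this means that the homomorphism \(L^\tau\) satisfies \(D_aL^\tau_uD_a^{-1}=L^\tau_{\rho(a)(u)}\). Since \(L^\tau=(L')^{\tau'}\) with \(\tau'\in Z^1_{L'}\), the same remark shows that \(\Theta_{L'}\) is trivializable. Exchanging the roles of \(L\) and \(L'\) gives the converse. Alternatively, one can check this directly by substituting \(L'=L^\sigma\) into \eqref{eq:defect-transformation} and comparing with the trivialization equation \eqref{eq:descent-trivialization} for \(\tau'\). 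This is only a bookkeeping check, since both sides equal \(\Theta_{L^\tau}\).

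I expect the main obstacle to be the fact that the two cocycle sets \(Z^1_L\) and \(Z^1_{L'}\) are defined relative to \emph{different} conjugation actions, \(u\mapsto L_u(\cdot)L_u^{-1}\) versus \(u\mapsto L'_u(\cdot)L_u'^{-1}\). Trying to transport \(\tau\) by a direct algebraic formula in the nonabelian group tends to produce messy expressions. The device above sidesteps this: it characterizes membership in \(Z^1\) purely by the condition that the modified map is a homomorphism, which is a property intrinsic to \(L^\tau\) and independent of the base lift.
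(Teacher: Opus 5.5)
Your proposal is correct and takes essentially the same route as the paper. The paper writes $L'_u=\sigma(u)L_u$, sets $\tau'(u)=\tau(u)\sigma(u)^{-1}$ so that $\tau'(u)L'_u=L^\tau_u$, applies Lemma~\ref{lem:gauge-modified-global-lift} to $L'$ to see that $\tau'$ is a crossed homomorphism, and concludes by symmetry; your explicit continuity remarks and the bijection $Z^1_L\to Z^1_{L'}$ are small refinements of the same argument.
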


\begin{proof}
Let
\[
L,L':Q^n\longrightarrow\operatorname{Isom}(\widetilde P)
\]
be two preliminary lifts of the global \(Q^n\)-action. Since \(L_u\) and
\(L'_u\) cover the same homeomorphism of \(\widetilde M\), there exists a unique gauge transformation $\sigma(u)\in\mathcal G(\widetilde P)$
such that $L'_u=\sigma(u)L_u$.
Because both \(L\) and \(L'\) are homomorphisms, \(\sigma\) satisfies $\sigma(uv)
=
\sigma(u)\,{}^u\sigma(v)$.

Suppose first that \(\Theta_L\) is trivializable. Then there exists a
continuous crossed homomorphism
\[
\tau:Q^n\longrightarrow\mathcal G(\widetilde P)
\]
such that the modified lift $L_u^\tau:=\tau(u)L_u$
is compatible with the deck transformations. Define
\[
\tau'(u):=\tau(u)\sigma(u)^{-1}.
\]
Then
\[
\tau'(u)L'_u
=
\tau(u)\sigma(u)^{-1}\sigma(u)L_u
=
\tau(u)L_u
=
L_u^\tau.
\]
Hence the modification of \(L'\) by \(\tau'\) coincides with the compatible lift \(L^\tau\). In particular, \(u\mapsto\tau'(u)L'_u\) is a homomorphism, and therefore \(\tau'\) is a crossed homomorphism with respect to the conjugation action induced by \(L'\), by Lemma~\ref{lem:gauge-modified-global-lift}. Thus \(\Theta_{L'}\) is trivializable. The converse follows by interchanging \(L\) and \(L'\). 
\end{proof}
 In view of Proposition~\ref{prop:descent-independence}, we write
\[
o_{\mathrm{desc}}(P,\mathcal Q)=0
\]
when the descent defect associated with one, and hence every, preliminary lift $L$  of the pulled-back \(Q^n\)-action determined by \(\mathcal Q\) is trivializable.

\subsection{The descent criterion}

We can now state the main result of this section.

\begin{thm}[Nonabelian descent criterion]
\label{thm:nonabelian-descent}
Assume that the global \(Q^n\)-action on \(\widetilde M\) admits a preliminary lift $L$ to the pulled-back principal \(Q^n\)-bundle $Q^n\longrightarrow\widetilde P\longrightarrow\widetilde M$.
Then the local \(Q^n\)-action on \(M\) admits a lift to \(P\) if and only if
\[
o_{\mathrm{desc}}(P,\mathcal Q)=0.
\]
Equivalently, the local action lifts if and only if there exists a continuous
crossed homomorphism
\[
\tau:Q^n\longrightarrow\mathcal G(\widetilde P)
\]
such that
\[
D_a^{-1}\tau\bigl(\rho(a)(u)\bigr)D_a\, \Theta_L(a,u)\, \tau(u)^{-1} = 1,\;\;\forall a\in\Gamma,\;u\in Q^n.
\]

\end{thm}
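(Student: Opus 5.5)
The plan is to reduce everything to Definition~\ref{def:local-Q-lifting} and the two gauge-modification lemmas, so that the theorem becomes essentially a bookkeeping statement. By Definition~\ref{def:local-Q-lifting}, a lift of the local action $\mathcal Q$ to $P$ is a continuous homomorphism $\widetilde\phi_Q:Q^n\to\operatorname{Isom}(\widetilde P)$ covering $\phi_Q$ and satisfying the deck compatibility \eqref{eq:lifted-semidir-compatibility}. Writing $D_a=\widetilde\phi_\pi(a)$, that relation reads $D_a\widetilde\phi_Q(u)D_a^{-1}=\widetilde\phi_Q(\rho(a)(u))$. This is equivalent to $D_a^{-1}\widetilde\phi_Q(\rho(a)(u))D_a=\widetilde\phi_Q(u)$, i.e.\ to $\Theta_{\widetilde\phi_Q}\equiv 1$. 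The second formulation in the statement is just Definition~\ref{def:descent-trivializable} unwound via Lemma~\ref{lem:defect-transformation}, so it suffices to prove the first equivalence.

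For ``$\Leftarrow$'': assume $o_{\mathrm{desc}}(P,\mathcal Q)=0$. By Proposition~\ref{prop:descent-independence}, $\Theta_L$ is trivializable for the given $L$, so there is $\tau\in Z^1_L(Q^n;\mathcal G(\widetilde P))$ satisfying \eqref{eq:descent-trivialization}. By Lemma~\ref{lem:gauge-modified-global-lift}, $L^\tau_u=\tau(u)L_u$ is a homomorphism. It covers $\phi_Q$ because $\tau(u)$ covers the identity, and it is continuous as a product of continuous maps into the topological group $\operatorname{Isom}(\widetilde P)$. Lemma~\ref{lem:defect-transformation} then gives $\Theta_{L^\tau}\equiv 1$, which is the deck compatibility. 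Hence $L^\tau$ is a lift in the sense of Definition~\ref{def:local-Q-lifting}.

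For ``$\Rightarrow$'': given a lift $\widetilde\phi_Q$ of the local action, it is in particular a preliminary lift with $\Theta_{\widetilde\phi_Q}\equiv1$, so it is trivializable by $\tau\equiv1$. We must transfer this to the given $L$. Set $\sigma(u):=\widetilde\phi_Q(u)L_u^{-1}\in\mathcal G(\widetilde P)$. By Lemma~\ref{lem:gauge-modified-global-lift} applied to the homomorphism $\widetilde\phi_Q=L^\sigma$, $\sigma$ is a crossed homomorphism, continuous by continuity of group operations. Then $L^\sigma=\widetilde\phi_Q$, so by Lemma~\ref{lem:defect-transformation} $\sigma$ satisfies \eqref{eq:descent-trivialization}. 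Thus $\Theta_L$ is trivializable. Alternatively, one can simply invoke Proposition~\ref{prop:descent-independence}.

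The only step needing care is continuity of $\tau$ and $\sigma$, which requires $\operatorname{Isom}(\widetilde P)$ to be a topological group in the compact-open topology. The paper's standing assumptions guarantee this, since $\widetilde M$ is a manifold and hence locally compact. There is also a harmless typo to note: the pulled-back bundle is a $Q^k$-bundle, not a $Q^n$-bundle. Otherwise the argument is formal, with no genuine obstacle.
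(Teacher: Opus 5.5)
Your proposal is correct and follows the paper's proof essentially step for step. In the forward direction you write the compatible lift as $\sigma(u)L_u$ and use Lemma~\ref{lem:gauge-modified-global-lift} and Lemma~\ref{lem:defect-transformation} to obtain a trivializing crossed homomorphism, and in the converse you modify $L$ by the given $\tau$ and apply the same two lemmas. Appealing directly to Definition~\ref{def:local-Q-lifting}, rather than to Proposition~\ref{prop:local-lifting-chartwise} as the paper does, is the more direct justification, and your remarks on continuity and on the $Q^k$ versus $Q^n$ typo are accurate.
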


\begin{proof}
Suppose first that the local \(Q^n\)-action admits a lift to \(P\). By Proposition~\ref{prop:local-lifting-chartwise}, this is equivalent to the existence of a lift
\[
L':Q^n\longrightarrow\operatorname{Isom}(\widetilde P)
\]
of the global \(Q^n\)-action satisfying $D_aL'_uD_a^{-1}
=
L'_{\rho(a)(u)},\;\;\forall a\in\Gamma,u\in Q^n$. Therefore
\[
\Theta_{L'}(a,u)=1.
\]

Let \(L\) be any preliminary lift. There exists a continuous map
\[
\tau:Q^n\longrightarrow\mathcal G(\widetilde P)
\]
such that $L'_u=\tau(u)L_u$.
Since \(L'\) is a homomorphism, \(\tau\) satisfies $\tau(uv)=\tau(u)\,{}^u\tau(v)$.
The transformation formula \eqref{eq:defect-transformation} now gives
\[
1
=
\Theta_{L'}(a,u)
=
D_a^{-1}\tau\bigl(\rho(a)(u)\bigr)D_a\,
\Theta_L(a,u)\,
\tau(u)^{-1}.
\]
Thus $o_{\mathrm{desc}}(P,\mathcal Q)=0$.

Conversely, suppose that $o_{\mathrm{desc}}(P,\mathcal Q)=0$.
Then there exists a continuous crossed homomorphism
\[
\tau:Q^n\longrightarrow\mathcal G(\widetilde P)
\]
such that
\[
D_a^{-1}\tau\bigl(\rho(a)(u)\bigr)D_a\,
\Theta_L(a,u)\,
\tau(u)^{-1}
=
1.
\]
Define
\[
L'_u:=\tau(u)L_u.
\]
By Lemma~\ref{lem:gauge-modified-global-lift}, \(L'\) is a homomorphism and hence a lift of the global \(Q^n\)-action. Moreover, by Lemma~\ref{lem:defect-transformation},
\[
\Theta_{L'}(a,u)=1.
\]
Therefore
\[
D_aL'_uD_a^{-1}
=
L'_{\rho(a)(u)},\;\;\forall a\in\Gamma,\;u\in Q^n.\]
Hence \(L'\) is compatible with the deck transformations and defines a lift of the original local \(Q^n\)-action to \(P\).
\end{proof}

Combining the global lift criterion with Theorem~\ref{thm:nonabelian-descent}, we obtain the complete criterion.

\begin{thm}[Lifting criterion for local quaternionic torus actions]
\label{thm:complete-local-lifting}
Let \(M\) be a manifold equipped with a local \(Q^n\)-action, $Q^k\longrightarrow P\longrightarrow M$ be a principal quaternionic torus bundle, and $Q^k\longrightarrow\widetilde P\longrightarrow\widetilde M$ be its pullback to the untwisted space. Then the local \(Q^n\)-action lifts to \(P\) if and only if the following two conditions hold:
\begin{enumerate}
\item[(1)] the global \(Q^n\)-action on \(\widetilde M\) admits a lift to \(\widetilde P\);

\item[(2)] the associated nonabelian descent obstruction vanishes;
\[
o_{\mathrm{desc}}(P,\mathcal Q)=0.
\]
\end{enumerate}

By Theorem~\ref{thm:global-q-HY}, applied to the case $G=Q^n,\;
X=\widetilde M$,
condition~\textup{(1)} is equivalent to the Borel condition
\[
[\widetilde P]
\in
\mathcal Q^k_{\mathrm{Bor}}\bigl(\widetilde M;Q^n\bigr)
\]
together with the trivializability of the associated global gauge-valued factor set.
\end{thm}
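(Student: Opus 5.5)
The proof combines Theorem~\ref{thm:nonabelian-descent} with Theorem~\ref{thm:global-q-HY}. Recall that, by Definition~\ref{def:local-Q-lifting}, a lift of the local action to \(P\) is a continuous homomorphism \(\widetilde\phi_Q:Q^n\to\operatorname{Isom}(\widetilde P)\) covering \(\phi_Q\) and satisfying \eqref{eq:lifted-semidir-compatibility}. By Proposition~\ref{prop:local-lifting-chartwise}, this agrees with the chartwise description.

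For necessity, suppose the local action lifts, and let \(L':=\widetilde\phi_Q\) be such a lift. Forgetting the compatibility \eqref{eq:lifted-semidir-compatibility}, \(L'\) is a lift of the global \(Q^n\)-action on \(\widetilde M\) to \(\widetilde P\), which gives (1). Taking \(L:=L'\) as the preliminary lift, \eqref{eq:lifted-semidir-compatibility} says \(\Theta_{L'}(a,u)=1\) for all \(a\in\Gamma\) and \(u\in Q^n\). The trivial crossed homomorphism \(\tau\equiv 1\) then satisfies \eqref{eq:descent-trivialization}, so \(o_{\mathrm{desc}}(P,L')=0\). By Proposition~\ref{prop:descent-independence}, this gives \(o_{\mathrm{desc}}(P,\mathcal Q)=0\), which is (2).

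For sufficiency, assume (1) and (2). Condition (1) supplies a preliminary lift \(L\), which is exactly the hypothesis of Theorem~\ref{thm:nonabelian-descent}. Condition (2), together with Proposition~\ref{prop:descent-independence}, gives that \(\Theta_L\) is trivializable for this particular \(L\). Theorem~\ref{thm:nonabelian-descent} then produces the lift \(L^\tau_u=\tau(u)L_u\), which is compatible with the deck transformations, so the local action lifts.

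For the final reformulation, apply Theorem~\ref{thm:global-q-HY} with \(G=Q^n\), \(X=\widetilde M\) and the bundle \(\widetilde P\). This identifies (1) with the conjunction of \([\widetilde P]\in\mathcal Q^k_{\mathrm{Bor}}(\widetilde M;Q^n)\) and the trivializability of \(\omega_s\). The Borel condition is needed first, since by Lemma~\ref{lem:borel-implies-invariance} and Proposition~\ref{prop:pseudolift-from-borel} it is what guarantees that a pseudo-lift \(s\) exists, so that \(\omega_s\) is defined. The trivializability does not depend on the choice of \(s\), by Lemma~\ref{lem:trivializability-independent}.

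The main obstacle is that Theorem~\ref{thm:global-q-HY} is stated for a connected, locally finite CW complex, whereas \(\widetilde M\) is the pullback of \(M\) along the universal covering of \(B_M\). I would argue that \(\widetilde M\) is a covering space of the smooth manifold \(M\): it is the pullback of a covering along the open map \(\pi_M\), and hence a manifold. Smooth manifolds admit CW structures that are locally finite. I would deal with connectedness componentwise, noting that \(Q^n\) is connected and so preserves each component. If this point is delicate, I would state the final sentence under these standing assumptions, in the same way the paper's Assumptions paragraph does.
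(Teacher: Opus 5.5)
Your proposal is correct and follows the paper's route. The paper obtains this theorem simply by combining Theorem~\ref{thm:nonabelian-descent}, which gives the equivalence once a preliminary lift exists (and a local lift is by definition a global lift plus deck compatibility, which gives (1)), with Theorem~\ref{thm:global-q-HY} for the reformulation of (1). Your check that \(\widetilde M\) satisfies the hypotheses of Theorem~\ref{thm:global-q-HY} is extra care the paper leaves implicit; note only that \(\widetilde M\to M\) is a covering simply because it is the pullback of the covering \(p\), so openness of \(\pi_M\) plays no role.
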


\begin{remark}
For a principal \(T^n\)-bundle, the gauge group is canonically
\[
\mathcal G(\widetilde P)
\cong
C(\widetilde M,T^n),
\]
which is abelian. In this case, Lemma~\ref{lem:defect-Q-cocycle} shows that, for every \(a\in\Gamma\), the map
\[
u\longmapsto\Theta_L(a,u)
\]
is an ordinary \(1\)-cocycle in $Z^1\bigl(T^n;C(\widetilde M,T^n)\bigr)$.
Moreover, Lemma~\ref{lem:defect-Gamma-cocycle} becomes the ordinary \(\Gamma\)-cocycle identity for the induced action of \(\Gamma\) on this abelian group. Hence the descent defect determines a cohomology class
\[
o(P)
\in
H^1\!\left(
\Gamma;
Z^1\bigl(T^n;C(\widetilde M,T^n)\bigr)
\right),
\]
which recovers Yoshida's obstruction class.

For principal \(Q^n\)-bundles, the gauge group
\[
\mathcal G(\widetilde P)
\cong
\Gamma\bigl(\operatorname{Ad}(\widetilde P)\bigr)
\]
is generally nonabelian. Consequently, neither the set of crossed \(Q^n\)-cocycles nor the set of descent defects carries a natural abelian group structure. The obstruction is therefore formulated as the trivializability of a nonabelian descent defect rather than as the vanishing of an ordinary cohomology class.
\end{remark}

 \begin{example}[Descent for an untwisted quotient example]
\label{ex:descent-torus-quotient}
Consider the \(8\)-dimensional manifold
\[
M=\widetilde M/\mathbb Z^2,
\qquad
\widetilde M=\mathbb R^2\times Q^2,
\]
from Example~\ref{ex:local-quaternionic-nonglobal}. Its orbit space is \(B_M\cong \mathbb T^2\), and the monodromy representation \(\rho:\pi_1(B_M)\cong \mathbb Z^2 \longrightarrow \operatorname{Aut}(Q^2)\) is given by \(\rho(1,0)=\sigma\) and \(\rho(0,1)=\operatorname{id}_{Q^2}\), where \(\sigma(q_1,q_2)=(q_2,q_1)\).

Let \(\widetilde P:=\widetilde M\times Q^k\) be the trivial principal \(Q^k\)-bundle over \(\widetilde M\), and let \(\mathbb Z^2\) act on \(\widetilde P\) by lifting its action on \(\widetilde M\) and acting trivially on the structural \(Q^k\)-factor. Explicitly, for the generator \(a=(1,0)\), set
\[
D_a(t_1,t_2,h_1,h_2,g)
=
(t_1-1,t_2,h_2,h_1,g),
\]
and for the generator \(b=(0,1)\), set
\[
D_b(t_1,t_2,h_1,h_2,g)
=
(t_1,t_2-1,h_1,h_2,g).
\]
The quotient \(P:=\widetilde P/\mathbb Z^2\) is then a principal \(Q^k\)-bundle over \(M\), and its pullback to
\(\widetilde M\) is canonically \(\widetilde P\).

The globally defined \(Q^2\)-action on \(\widetilde M\) is
\[
(q_1,q_2)\cdot(t_1,t_2,h_1,h_2)
=
(t_1,t_2,q_1h_1,q_2h_2).
\]
It admits the preliminary lift
\[
L:Q^2\longrightarrow \operatorname{Isom}(\widetilde P)
\]
defined by
\[
L_{(q_1,q_2)}(t_1,t_2,h_1,h_2,g)
=
(t_1,t_2,q_1h_1,q_2h_2,g).
\]
Thus \(L\) acts by the global \(Q^2\)-action on the base and trivially on
the structural \(Q^k\)-factor.

We now compute the descent defect. For \(a=(1,0)\), using
\[
\rho(a)(q_1,q_2)=(q_2,q_1),
\]
we have
\[
D_a^{-1}L_{\rho(a)(q_1,q_2)}D_a
=
L_{(q_1,q_2)}.
\]
Indeed,
\[
D_a(t_1,t_2,h_1,h_2,g)
=
(t_1-1,t_2,h_2,h_1,g),
\]
then
\[
L_{\rho(a)(q_1,q_2)}D_a(t_1,t_2,h_1,h_2,g)
=
(t_1-1,t_2,q_2h_2,q_1h_1,g),
\]
and applying \(D_a^{-1}\) gives
\[
(t_1,t_2,q_1h_1,q_2h_2,g)
=
L_{(q_1,q_2)}(t_1,t_2,h_1,h_2,g).
\]
Hence
\[
D_a^{-1}L_{\rho(a)(u)}D_aL_u^{-1}=1
\]
for every \(u\in Q^2\).

For the second generator \(b=(0,1)\), the monodromy is trivial, \(\rho(b)=\operatorname{id}_{Q^2}\). Since \(D_b\) only translates the \(t_2\)-coordinate and \(L_u\) acts only on the \(Q^2\)-coordinates, we also have
\[
D_b^{-1}L_{\rho(b)(u)}D_b
=
D_b^{-1}L_uD_b
=
L_u.
\]
Therefore
\[
D_b^{-1}L_{\rho(b)(u)}D_bL_u^{-1}=1.
\]

Since \(a\) and \(b\) generate \(\mathbb Z^2\), it follows that
\[
\Theta_L(\gamma,u)
=
D_\gamma^{-1}L_{\rho(\gamma)(u)}D_\gamma L_u^{-1}
=
1
\]
for every \(\gamma\in\pi_1(B_M)\cong\mathbb Z^2\) and every \(u\in Q^2\). Thus the descent defect is identically trivial, \(\Theta_L=1\).

Consequently the descent obstruction vanishes, with trivializing crossed homomorphism \(\tau(u)=1\). By Theorem~\ref{thm:complete-local-lifting}, the local \(Q^2\)-action on \(M\) lifts to the principal \(Q^k\)-bundle \(P\longrightarrow M\).
\end{example}

%%%%%%%%%%%%%%

\end{document}